\documentclass[]{article}
\usepackage[english]{babel}
\usepackage{amsmath, amsfonts, amsthm, xspace, xcolor, fullpage, url}

\usepackage{graphicx}

\newcommand{\re}{\mbox{Re}\,\xspace}
\newcommand{\im}{\mbox{Im}\,\xspace}
\newcommand{\norm}[1]{\left \lVert #1 \right \rVert\xspace}

\newtheorem{theorem}{Theorem}[section]
\newtheorem{lemma}[theorem]{Lemma}
\newtheorem{corollary}[theorem]{Corollary}

\newtheorem{question}{Question}

\newtheorem{prop}[theorem]{Proposition}

\theoremstyle{remark}

\theoremstyle{definition}
\newtheorem{hypothesis}{Hypothesis}
\newtheorem{defn}{Definition}[section]
\newtheorem{assumption}[defn]{Assumption}

\numberwithin{equation}{section}

\title{Pseudospectral approximation of Hopf bifurcation for delay differential equations}
\author{Babette de Wolff\footnote{Institut f{\"u}r Mathematik, Freie Universit{\"a}t Berlin, Arnimallee 3, D - 14195 Berlin}, Francesca Scarabel\footnote{LIAM–Laboratory for Industrial and Applied Mathematics, Department of Mathematics and Statistics, York University, 4700 Keele Street, Toronto, ON M3J 1P3, Canada} \footnote{CDLab–Computational Dynamics Laboratory, Department of Mathematics, Computer Science, and Physics, University of Udine, via delle scienze 206, 33100 Udine, Italy}, Sjoerd Verduyn Lunel\footnote{Department of Mathematics, University of Utrecht, Budapestlaan 6, P.O. Box 80010, 3508 TA Utrecht, The
Netherlands}, Odo Diekmann\footnotemark[4].
}

\date{}

\begin{document}

\maketitle

\begin{abstract}
Pseudospectral approximation reduces DDE (delay differential equations) to ODE (ordinary differential equations). Next one can use ODE tools to perform a numerical bifurcation analysis. By way of an example we show that this yields an efficient and reliable method to qualitatively as well as quantitatively analyse certain DDE. To substantiate the method, we next show that the structure of the approximating ODE is reminiscent of the structure of the generator of translation along solutions of the DDE. Concentrating on the Hopf bifurcation, we then exploit this similarity to reveal the connection between DDE and ODE bifurcation coefficients and to prove the convergence of the latter to the former when the dimension approaches infinity.  \\

\textsc{AMS Subject Classification:} 34K18, 37M20, 65P30, 65L03, 92D25
\end{abstract}

\maketitle



\section{Introduction}
Numerical bifurcation analysis \cite{bifboek, kuznetsov} is nowadays a powerful method for analysing dynamical systems that arise in applications. For ordinary differential equations (ODE) trustworthy  tools, such as Auto \cite{auto} and MatCont \cite{matcont}, exist (here `trustworthy' indicates that they are tested and maintained, i.e., adapted when the software or hardware environment in which they are embedded changes). For delay differential equations (DDE) there are trustworthy tools too, e.g., DDE-BIFTOOL \cite{biftool1, biftool2} and KNUT \cite{knut}, but these can handle only specific classes of DDE, such as equations with point delays, and it seems fair to say that both maintenance and testing is somewhat vulnerable, because it relies on the efforts of just a few individuals, if not just one. So if we manage to systematically approximate infinite dimensional dynamical systems corresponding to DDE by finite dimensional systems corresponding to ODE, we may lose some precision in the numerical bifurcation analysis, but we would be able to handle a much larger class of equations.

In \cite{nonlinearps} pseudospectral approximation is advocated as a promising approach to achieve exactly this. The aim of the present paper is to make a next step by verifying that the generic Hopf bifurcation in DDE is faithfully captured by Hopf bifurcations in the approximating ODE systems. Our theoretical results concern the limit when the dimension of the approximating system goes to infinity. In practice we of course at best verify that a bifurcation diagram remains essentially unchanged when the dimension is increased by a finite amount (for example doubled). The theoretical results generate confidence that the bifurcation diagram of the approximating ODE captures the DDE dynamics if it is robust under increase of the dimension.

In the following we take a famous example from mathematical biology, namely the `Nicholson's blowflies' equation, as a testing ground to illustrate some features of the approach. 
However, we remark that the methodology presented here (pseudospectral approximation combined with software for bifurcation analysis of ODE) can be applied in a much more general setting:
it is indeed a promising procedure to study differential equations with distributed, state-dependent, and even infinite delays \cite{nonlinearps,infinite,statedep}, as well as nonlinear renewal equations \cite{renewalbif} and first order partial differential equations \cite{pde}. The advantage of considering Nicholson's blowflies equation in this context is due to the fact that explicit comparisons are possible, both with analytically computed quantities and with alternative numerical approximations, as will become clear later on.

\section*{Acknowledgments}
We thank two anonymous referees and Sebastiaan Janssens for very helpful comments that led to substantial improvement of the manuscript. 

\section{A motivating example: `Nicholson's blowflies' equation} \label{sec: blowfly}

\begin{figure}[t]
\centering
\includegraphics[scale=1]{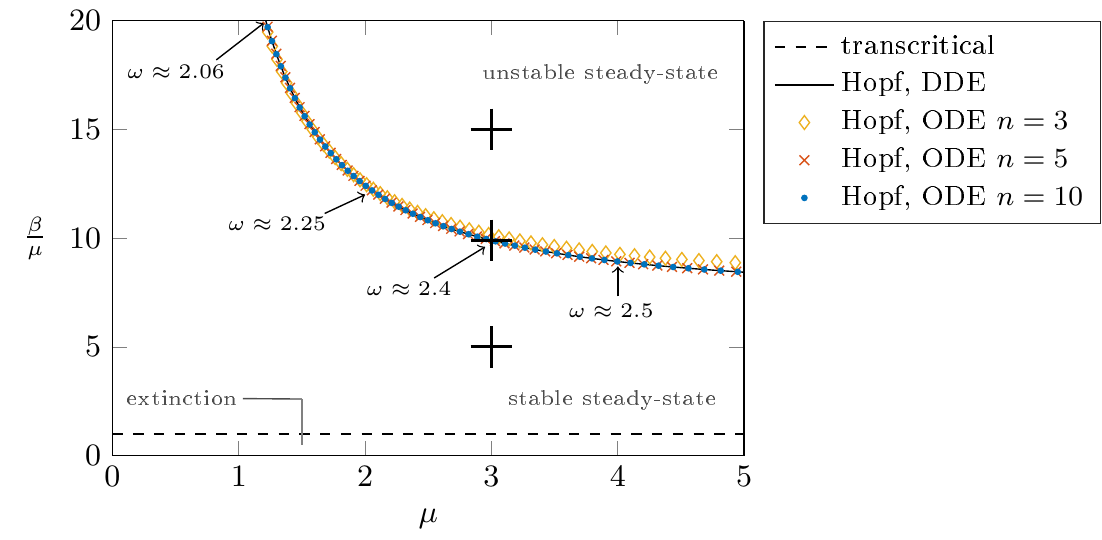}
\caption{Stability diagram of \eqref{eq: blowfly unscaled} and its pseudospectral approximation for $\tau = 1$ and $h(x) = e^{-x}$. The horizontal black dashed line indicates the transcritical bifurcation in \eqref{eq: blowfly unscaled} and its pseudospectral approximation. The Hopf bifurcation curves are computed analytically, both for the DDE (black solid) and the pseudospectral approximation (colors), see Appendix. 
{\color{black}The different values of $\omega$ indicated along the Hopf bifurcation curve specify the position of the critical root of the characteristic equation on the positive imaginary axis.}
The black crosses refer to parameter values used in Figure \ref{fig:eigenvalues}. 
}
\label{fig:hopf-analytic}
\end{figure}

In the paper \cite{blowflies}, Gurney, Blythe and Nisbet showed that Nicholson's classic laboratory blowfly data are in good quantitative agreement with various characteristics of solutions of the DDE
\begin{align} \label{eq: blowfly unscaled}
N'(t) = - \mu N(t) + \beta N(t - \tau) h(N(t - \tau)), \quad t \geq 0. 
\end{align}

Here $N$ corresponds to the size of the population of adults, where newborns become adult after a maturation delay $\tau$. The parameter $\mu \geq 0$ refers to the per capita death rate and $\beta \geq 0$ to the maximum per capita egg production rate. The graph of the recruitment function $N \mapsto N h(N)$ is assumed to be humped. This form reflects scramble competition for the experimentally controlled limited amount of protein resource: female adults need a certain quantity of protein in order to be able to produce eggs.

So \eqref{eq: blowfly unscaled} has a very respectable background in population biology. Here we want to demonstrate that the pseudospectral methodology enables a quick and efficient numerical bifurcation analysis of \eqref{eq: blowfly unscaled} with relatively little effort. In addition we shall pay attention to the accuracy of the approximation. Equation \eqref{eq: blowfly unscaled} is rather well suited to do so, as several features (in particular the stability boundary in a two-parameter space, see Figure \ref{fig:hopf-analytic}) can be derived analytically.

Using the pseudospectral technique, equation \eqref{eq: blowfly unscaled} is approximated by a system of $n+1$ ODE for the variables $y_0,\dots,y_n$, where the first equation reads
\begin{align} \label{eq: blowfly ps part 1}
y_0' = - \mu y_0 + \beta y_n h (y_n),
\end{align}
and captures the rule for extension \eqref{eq: blowfly unscaled}, with $y_0(t)$ and $y_n(t)$ approximating $N(t)$ and $N(t-\tau)$, respectively.
The remaining $n$ equations are needed to describe translation along the solution, and are in fact independent of the specific delay equation.
We refer to Section \ref{sec: ps approximation} for the details of the pseudospectral approximation.

Under the assumption that $h$ is decreasing and vanishing at infinity, with $h(0)=1$, for every $\beta > \mu$ there exists a positive equilibrium of both equation \eqref{eq: blowfly unscaled} and the corresponding approximating system. {\color{black}Moreover, for both equations the stability boundary (in a two-parameter plane) of the positive equilibrium can be computed analytically. We shall do so in Appendix A.}

We find that for $\beta < \mu$ the trivial equilibrium is asymptotically stable and the population goes extinct. For $\beta  = \mu$ the trivial and non-trivial equilibrium exchange stability in a transcritical bifurcation. If we then follow a one-parameter path in the $(\mu, \beta/\mu)$-plane that crosses the Hopf bifurcation curve (see Figure \ref{fig:hopf-analytic}) transversally, the positive equilibrium of \eqref{eq: blowfly unscaled} loses its stability in a Hopf bifurcation. Figure \ref{fig:hopf-analytic} gives the stability diagram for \eqref{eq: blowfly unscaled} and its pseudospectral approximation, for various values of the discretisation parameter $n$. 

One of the main advantages of the pseudospectral approximation is that the resulting system can be analysed with software for the numerical bifurcation analysis of ODE. Throughout the following sections, we will illustrate the obtained results by comparing analytical computations for \eqref{eq: blowfly unscaled} with numerical bifurcation results of the approximating ODE. In Section \ref{sec: outlook} we will explore the dynamics beyond the Hopf bifurcation curve and show that, using numerical approximations, one can transcend a pen-and-paper analysis and investigate more complex objects like periodic solutions and their bifurcations. 

In the following sections we will study the convergence of the approximations in the limit $n\to \infty$. In this perspective, Figure \ref{fig:hopf-analytic} and later figures lift up our spirits by showing that, in practice, the approximation of the stability curves and associated quantities is extremely good already for low values of $n$. 

\section{The Hopf bifurcation theorem: a quick refresher} \label{sec: hopf refresher}

In this section, we recall the Hopf bifurcation theorem for general ODE and for \emph{scalar} DDE. {\color{black} For proofs of (equivalent formulations of) the results, as well as additional references, see \cite[Chapter X, Theorems 2.1, 2.7, 3.1 and 3.9]{sunstarbook} and \cite{Lani-Wayda13}.}

Consider the ODE
\begin{align} \label{eq: ode hopf}
x'(t) = A(\alpha) x(t) + f(x(t), \alpha), \qquad t \geq 0
\end{align}
with $\alpha \in \mathbb{R}$, $A(\alpha): \mathbb{R}^d \to \mathbb{R}^d$ linear and $f: \mathbb{R}^d \times \mathbb{R} \to \mathbb{R}^d$ for some $d \in \mathbb{N}$. We summarise the relevant requirements on $A$ and $f$ in a hypothesis.
\begin{hypothesis} \hfill \label{hyp: condities functies ode}
\begin{enumerate}
\item $f: \mathbb{R}^d \times \mathbb{R} \to \mathbb{R}^d$  and $\alpha \mapsto A(\alpha)$ are $C^k$ smooth for some $k \geq 3$;
\item $f(0, \alpha) = 0$ and $D_1 f(0, \alpha) =0$ for all $ \alpha \in \mathbb{R}$.
\end{enumerate}
\end{hypothesis}
{\color{black}
Under this hypothesis, system \eqref{eq: ode hopf} has an equilibrium $x=  0$ for all $\alpha \in \mathbb{R}$, but in the results presented below only a small neighbourhood of a specific value $\alpha_0$ matters. The linearisation of \eqref{eq: ode hopf} at this equilibrium is given by
\[ \dot{x}(t) = A(\alpha) x(t). \]
For two vectors $v, w \in \mathbb{C}^d$, we define 
\[ v \cdot w = \sum_{i = 1}^n v_i w_i.  \]
Note that this differs from the inner product between $v$ and $w$, which is $v \cdot \overline{w}$ (or $\overline{v} \cdot w$) in the present notation. 
}

\begin{theorem}[Hopf bifurcation theorem for ODE] \label{thm: hopf bif ode}
Consider system \eqref{eq: ode hopf} and assume that Hypothesis \ref{hyp: condities functies ode} is satisfied. If there exist $\alpha_0 \in \mathbb{R}$ and {\color{black}$\omega_0> 0$} such that 
\begin{enumerate}
\item $i \omega_0$ is a simple eigenvalue of $A(\alpha_0)$;
\item the branch of eigenvalues of $A(\alpha)$ through $i\omega_0$ at $\alpha = \alpha_0$ intersects the imaginary axis transversally, i.e., the real part of the derivative of the eigenvalues along the branch is non-zero. 
If we denote by $p, q \in \mathbb{C}^d \backslash \{0 \}$ vectors such that $A(\alpha_0)p = i \omega_0 p, \ A(\alpha_0)^T q = i \omega_0 q$ and $q \cdot p = 1$, then this condition amounts to
\begin{align} \label{eq: crossing eigenvectors}
\re \left(q \cdot   A'(\alpha_0) p\right) \neq 0;
\end{align}
\item {\color{black} $k i \omega_0$ is not an eigenvalue of $A(\alpha_0)$ for $k = 0, 2, 3, \ldots$}
\end{enumerate}
then a Hopf bifurcation occurs for $\alpha = \alpha_0$. This means that there exist $C^{k-1}$ functions $\epsilon \mapsto \alpha^\ast(\epsilon), \ \epsilon \mapsto \omega^\ast(\epsilon)$ taking values in $\mathbb{R}$ and $\epsilon \mapsto x^\ast(\epsilon) \in {\color{black}C_b(\mathbb{R}, \mathbb{R}^d)}$, all defined for $\epsilon$ sufficiently small, such that for $\alpha = \alpha^\ast(\epsilon)$, $x^\ast(\epsilon)$ is a periodic solution of \eqref{eq: ode hopf} with period $2 \pi/\omega^\ast(\epsilon)$. Moreover, $\alpha^\ast$ and $\omega^\ast$ are even functions, $\alpha^\ast(0) = \alpha_0, \ \omega^\ast(0) = \omega_0$ and if $x$ is a small periodic solution of \eqref{eq: ode hopf} for $\alpha$ close to $\alpha_0$ and minimal period close to $2 \pi/\omega_0$, then $x(t) = x^\ast(\epsilon)(t + \theta^\ast)$ and $\alpha = \alpha^\ast(\epsilon)$ for some $\epsilon$ and some $\theta^\ast \in [0, 2 \pi/\omega^\ast(\epsilon))$. 

Moreover, $\alpha^\ast$ has the expansion $\alpha^\ast(\epsilon) =\alpha_{0} + a_{20}  \epsilon^2 + o(\epsilon^2)$, with $a_{20}$ given by
\begin{align*}
a_{20} = - \frac{\re c}{\re \left(q \cdot A'(\alpha_0) p\right)}
\end{align*}
with
\begin{align*}
c = &\frac{1}{2} q \cdot D_1^3 f(0, \alpha_0) (p, p, \overline{p}) + q \cdot D_1^2 f(0, \alpha_0) \bigl(-A(\alpha_0)^{-1} D_1^2 f(0, \alpha_0) (p, \overline{p}), p\bigr) \\
&\qquad + \frac{1}{2} q \cdot D_1^2 f(0, \alpha_0) \bigl( (2 i \omega_0 - A(\alpha_0))^{-1} D_1^2 f(0, \alpha_0) (p, p), \overline{p} \bigr).
\end{align*}
\end{theorem}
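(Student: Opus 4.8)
The plan is to reduce system \eqref{eq: ode hopf} to a two-dimensional equation by a parameter-dependent centre manifold, apply the classical planar Hopf theorem to the reduced equation, and then identify the planar normal-form coefficient with the invariant expression $c$; an essentially equivalent route replaces the centre-manifold step by a Lyapunov--Schmidt reduction on the space of $2\pi/\omega$-periodic functions. First I set up the spectral picture at $\alpha_0$. Since $i\omega_0$ is simple and, by condition~3, none of $0,\pm2i\omega_0,\pm3i\omega_0,\dots$ lies in the spectrum of $A(\alpha_0)$, there is a splitting $\mathbb{C}^d=X_c\oplus X_h$ with $X_c=\operatorname{span}\{p,\overline p\}$ two-dimensional (here $\omega_0>0$ ensures $i\omega_0\neq -i\omega_0$) and $X_h$ the complementary $A(\alpha_0)$-invariant subspace whose spectrum avoids the imaginary axis; in particular $A(\alpha_0)$ and $2i\omega_0 I-A(\alpha_0)$ are invertible, so the resolvents in the formula for $c$ are well defined. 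Writing $P_c,P_h$ for the associated spectral projections, the normalisations $q\cdot p=1$ and $A(\alpha_0)^{T}q=i\omega_0 q$ together with $\omega_0\neq0$ force $q\cdot\overline p=0$, so $P_c v=(q\cdot v)p+(\overline q\cdot v)\overline p$ and the natural coordinate on $X_c$ is $z=q\cdot x$.

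Next, append $\dot\alpha=0$ and invoke the parameter-dependent centre manifold theorem: near $(x,\alpha)=(0,\alpha_0)$ there is a locally invariant $C^{k-1}$ manifold $x=zp+\overline z\,\overline p+W(z,\overline z,\alpha)$ with $q\cdot W\equiv0$ and $W=O\bigl((|z|+|\alpha-\alpha_0|)^2\bigr)$, on which \eqref{eq: ode hopf} reduces to
\[
\dot z=\lambda(\alpha)z+g(z,\overline z,\alpha),\qquad g(z,\overline z,\alpha)=q\cdot f\bigl(zp+\overline z\,\overline p+W(z,\overline z,\alpha),\alpha\bigr),
\]
where $\lambda(\cdot)$ is the smooth branch of eigenvalues of $A(\cdot)$ through $i\omega_0$ and $g=O(|z|^2)$. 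Perturbation theory of a simple eigenvalue gives $\lambda'(\alpha_0)=q\cdot A'(\alpha_0)p$, so condition~2 is exactly the transversality $\re\lambda'(\alpha_0)\neq0$; the remaining hypotheses of the planar Hopf theorem hold automatically in dimension two. This produces the branch $\epsilon\mapsto(\alpha^\ast(\epsilon),\omega^\ast(\epsilon))$ with the stated regularity and even-ness and a family $z^\ast(\epsilon)$ of periodic solutions; lifting through $W$, and using that every small periodic solution of \eqref{eq: ode hopf} with period near $2\pi/\omega_0$ lies on the centre manifold (the reduction principle for periodic orbits, or an a priori estimate slaving the $X_h$-component to the $X_c$-component), yields $x^\ast(\epsilon)$ and the uniqueness-up-to-phase statement. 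The planar theory moreover gives $\alpha^\ast(\epsilon)=\alpha_0+a_{20}\epsilon^2+o(\epsilon^2)$ with $a_{20}=-\re c_1(\alpha_0)/\re\lambda'(\alpha_0)$, where $c_1(\alpha)$ is the coefficient of $w|w|^2$ in the Poincar\'e normal form $\dot w=\lambda(\alpha)w+c_1(\alpha)w|w|^2+O(|w|^4)$ of the reduced equation (with $\epsilon$ normalised as in the statement).

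It then remains to show $c_1(\alpha_0)=c$. I would compute the quadratic Taylor part of $W$ at $\alpha_0$ by solving the invariance identity $D_zW\,\dot z+D_{\overline z}W\,\dot{\overline z}=A(\alpha_0)W+P_h f(\cdot,\alpha_0)$ order by order: with $W=\tfrac12 w_{20}z^2+w_{11}z\overline z+\tfrac12\overline w_{20}\overline z^2+O(|z|^3)$ one obtains $w_{20}=(2i\omega_0 I-A(\alpha_0))^{-1}P_h D_1^2 f(0,\alpha_0)(p,p)$ and $w_{11}=-A(\alpha_0)^{-1}P_h D_1^2 f(0,\alpha_0)(p,\overline p)$. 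Substituting into $g$ and reading off the Taylor coefficients $g_{20},g_{11},g_{02}$ (from $D_1^2 f$) and $g_{21}$ — the $z^2\overline z$ coefficient, which gathers $\tfrac12 q\cdot D_1^3 f(0,\alpha_0)(p,p,\overline p)$ and the cross terms $q\cdot D_1^2 f(0,\alpha_0)(w_{11},p)$ and $\tfrac12 q\cdot D_1^2 f(0,\alpha_0)(w_{20},\overline p)$ — and then inserting everything into the textbook planar formula $c_1(\alpha_0)=\tfrac12 g_{21}+\tfrac{i}{2\omega_0}\bigl(g_{20}g_{11}-2|g_{11}|^2-\tfrac13|g_{02}|^2\bigr)$, one is left with a purely algebraic identity: the quadratic corrections are precisely what is needed to absorb the projectors $P_h=I-P_c$, turning $(2i\omega_0 I-A)^{-1}P_h$ and $-A^{-1}P_h$ into the full-space resolvents appearing in the statement, so that $c_1(\alpha_0)=c$. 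Together with the previous paragraph this yields $a_{20}=-\re c/\re\bigl(q\cdot A'(\alpha_0)p\bigr)$.

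The qualitative conclusions (existence, $C^{k-1}$-regularity, even-ness of $\alpha^\ast,\omega^\ast$, local uniqueness up to phase) then follow cleanly from the two-dimensional theory once the reduction and transversality are in place; the one delicate point there is the local uniqueness of small periodic orbits, which requires the reduction principle in its form for periodic solutions, not merely for the flow. The step I expect to be the main obstacle is the identification $c_1(\alpha_0)=c$: it demands that every combinatorial factor be carried faithfully through the Taylor expansion of $g$, through the near-identity normal-form change of variables, and through the splitting $P_h=I-P_c$ — the last being exactly what explains the (at first sight surprising) absence of explicit quadratic terms in the invariant formula for $c$. Keeping the sign and normalisation conventions of the planar Hopf theorem consistent, so that $a_{20}$ emerges with the stated sign, is a further bookkeeping hazard worth flagging.
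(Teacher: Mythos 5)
The paper does not actually prove Theorem \ref{thm: hopf bif ode}: it quotes it as a known result and refers to \cite[Chapter X]{sunstarbook} and \cite{Lani-Wayda13}, so there is no in-paper argument to compare against line by line. Your proposal is a correct sketch of one of the standard proofs (parameter-dependent centre manifold plus planar Poincar\'e normal form; the Lyapunov--Schmidt route you mention is the other standard option and is closer in spirit to the cited source). The ingredients check out: $q\cdot\overline p=0$ follows from $\omega_0\neq 0$, the homological equations do give $w_{20}=(2i\omega_0I-A)^{-1}P_hD_1^2f(p,p)$ and $w_{11}=-A^{-1}P_hD_1^2f(p,\overline p)$, and the identification you single out as the main obstacle is indeed a finite computation that works: writing $P_h=I-P_c$ and using $A^{-1}p=p/(i\omega_0)$, $A^{-1}\overline p=-\overline p/(i\omega_0)$, $(2i\omega_0-A)^{-1}p=p/(i\omega_0)$, $(2i\omega_0-A)^{-1}\overline p=\overline p/(3i\omega_0)$, the $P_c$-parts of the two resolvent terms contribute exactly $\tfrac{i}{2\omega_0}\bigl(g_{20}g_{11}-2|g_{11}|^2-\tfrac13|g_{02}|^2\bigr)$, so the invariant expression $c$ equals the planar coefficient $c_1(\alpha_0)$ identically (not merely in real part), and with $\re\lambda'(\alpha_0)=\re\bigl(q\cdot A'(\alpha_0)p\bigr)$ the sign in $a_{20}=-\re c/\re\bigl(q\cdot A'(\alpha_0)p\bigr)$ comes out as stated. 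Two caveats you correctly flag but should treat carefully in a full write-up: the exact value (not just the sign) of $a_{20}$ requires fixing the amplitude parametrisation $\epsilon$ consistently with the family $x^\ast(\epsilon)$ in the statement, and the local uniqueness claim needs the fact that every orbit remaining in a small neighbourhood for all real time (in particular every small periodic orbit with period near $2\pi/\omega_0$, with $\alpha$ appended as a trivial equation) lies on the local centre manifold.
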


\noindent {\color{black}For a proof that condition \eqref{eq: crossing eigenvectors} is equivalent to a transversal crossing of the eigenvalues at the bifurcation point, see \cite[Appendix XIII, Lemma 1.15]{sunstarbook}.}

We refer to the coefficient $a_{20}$ as the \emph{direction coefficient}; {\color{black} the quantity $\frac{1}{\omega_0} \re c$ is usually referred to as the \emph{first Lyapunov coefficient} (as it is the sign that matters, it is tempting to also refer to Re c as the Lyapunov coefficient; below we shall allow ourselves such sloppiness).} In the expression for the direction coefficient, the denominator captures whether {\color{black}dimension of the unstable subspace of the steady state} increases or decreases as we vary the parameter across the bifurcation point. At the bifurcation point, the steady state is not hyperbolic; provided the Lyapunov coefficient is non-zero, it determines whether the steady state is stable or unstable at the bifurcation point \cite{kuznetsov}. 

\medskip

Next we consider the \emph{scalar} DDE
\begin{align}
x'(t) = L(\alpha) x_t + g(x_t, \alpha), \quad t \geq 0  \label{eq: dde parameter}
\end{align}
with state space $X = C\left([-1, 0], \mathbb{R}\right)$, $\alpha \in \mathbb{R}$ a parameter, $L(\alpha) : X \to \mathbb{R}$ a bounded linear operator and $g: X \times \mathbb{R} \to \mathbb{R}$. Without loss of generality, we have taken the maximal delay to be 1. We summarise the relevant requirements on $L$ and $g$ in a hypothesis.
\begin{hypothesis} \label{hyp: condities functies dde} \hfill
\begin{enumerate}
\item $g: X \times \mathbb{R} \to \mathbb{R}$ and $\alpha \mapsto L(\alpha)$ are $C^k$ smooth for some $k \geq 3$;
\item $g(0, \alpha) = 0$ and $D_1 g(0, \alpha)= 0$ for all $\alpha \in \mathbb{R}$.
\end{enumerate}
\end{hypothesis}
Under this hypothesis, system \eqref{eq: dde parameter} has an equilibrium $x= 0$ for all $\alpha \in \mathbb{R}$. The linearisation of \eqref{eq: dde parameter} has a solution $t \mapsto e^{\lambda t}$ if and only if $\lambda$ is a root of
the characteristic equation
\begin{align} \label{eq: ce dde parameter}
\Delta_0(\lambda, \alpha) = 0\qquad\mbox{with}\quad \Delta_0(\lambda, \alpha) := \lambda - L(\alpha)\varepsilon_\lambda,
\end{align}
where $\varepsilon_\lambda \in X$ denotes the exponential function 
\begin{equation}\label{eq:exp-fct}
\varepsilon_\lambda(\theta) = e^{\lambda \theta}, \ \theta \in [-1, 0]. 
\end{equation}
The roots of the characteristic equation \eqref{eq: ce dde parameter} correspond to the eigenvalues of the generator of the linearised semiflow of \eqref{eq: dde parameter}, cf. \cite[Section IV.3]{sunstarbook}.

DDE like \eqref{eq: dde parameter} can have only a finite number of characteristic roots on the imaginary axis, resulting in the existence of a finite dimensional center manifold.
On this finite dimensional center manifold, which is by construction invariant under the flow, the DDE reduces to an ODE. This allows one to `lift' the Hopf bifurcation theorem from ODE to DDE. This is done in detail in \cite[Chapter X]{sunstarbook}, in this section we just state the main result. 

\begin{theorem}[Hopf bifurcation theorem for \emph{scalar} DDE]  \label{thm: hopf bifurcation dde}
Consider equation \eqref{eq: dde parameter} and suppose that Hypothesis \ref{hyp: condities functies dde} is satisfied. If there exist $\alpha_0 \in \mathbb{R}$ and {\color{black}$\omega_0 > 0$} such that
\begin{enumerate}
\item $i \omega_0$ is a simple root of $\Delta_0(\lambda, \alpha_0) = 0$;
\item The branch of roots of $\Delta_0(\lambda,\alpha) = 0$ through $i\omega_0$ at $\alpha = \alpha_0$ intersects the imaginary axis transversally,  i.e. the real part of the derivative of the roots along the branch is non-zero. This condition amounts to
\begin{align*}
\re \left(D_1 \Delta_0(i \omega_0, \alpha_0)^{-1}D_2 \Delta_0(i \omega_0, \alpha_0)\right) \neq 0;
\end{align*}
\item {\color{black}$k i \omega_0$ is not a root of $\Delta_0(\lambda, \alpha_0) = 0$ for $k = 0,2, 3, \ldots$}
\end{enumerate}
then a Hopf bifurcation occurs for $\alpha = \alpha_0$. This means that there exist $C^{k-1}$-functions $\epsilon \mapsto \alpha^\ast(\epsilon)$, $\epsilon \mapsto \omega^\ast(\epsilon)$ taking values in $\mathbb{R}$ and $\epsilon \mapsto x^\ast(\epsilon) \in {\color{black}C_b\left(\mathbb{R}, \mathbb{R}\right)}$, all defined for $\epsilon$ sufficiently small, such that for $\alpha = \alpha^\ast(\epsilon), \ x^\ast(\epsilon)$ is a periodic solution of \eqref{eq: dde parameter} with period $2 \pi/\omega^\ast(\epsilon)$. Moreover, $\alpha^\ast$ and $\omega^\ast$ are even functions, $\alpha^\ast(0) = \alpha_0, \ \omega^\ast(0) = \omega_0$ and if $x$ is a small periodic solution of \eqref{eq: dde parameter} for $\alpha$ close to $\alpha_0$ and minimal period close to $2 \pi/\omega_0$, then $x(t) = x^\ast(\epsilon)(t+\theta^\ast)$ and $\alpha = \alpha^\ast(\epsilon)$ for some $\epsilon$ and some $\theta^\ast \in [0, 2 \pi/\omega^\ast(\epsilon))$. 

Moreover, $\alpha^\ast$ has the expansion 
$\alpha^\ast(\epsilon) =\alpha_0 + a_{20} \epsilon^2 + o(\epsilon^2)$, with $a_{20}$ given by
\begin{align*}
a_{20} = \frac{\re c_0}{\re \left(D_1 \Delta_0(i \omega_0, \alpha_0)^{-1} D_2 \Delta_0(i \omega_0, \alpha_0)\right)}
\end{align*}
where
\begin{equation} \label{eq: c dde}
\begin{aligned}
c_0 = & (D_1 \Delta_0(i \omega_0, \alpha_0))^{-1}\frac{1}{2}D_1^3 g(0, \alpha_0)(\phi, \phi, \overline{\phi}) \\ & \qquad + (D_1 \Delta_0(i \omega_0, \alpha_0))^{-1} D_1^2 g(0, \alpha_0) \bigl(\varepsilon_0 \Delta_0(0, \alpha_0)^{-1} D_1^2 g(0, \alpha_0) (\phi, \overline{\phi}), \phi\bigr) \\
&\qquad + (D_1 \Delta_0(i \omega_0, \alpha_0))^{-1} \frac{1}{2} D_1^2 g(0, \alpha_0) \bigl( \varepsilon_{2 i \omega_0} \Delta_0(2 i \omega_0, \alpha_0)^{-1} D_1^2 g(0, \alpha_0) (\phi, \phi), \overline{\phi}\bigr) 
\end{aligned}
\end{equation}
with $\phi := \varepsilon_{i \omega_0}$. 
\end{theorem}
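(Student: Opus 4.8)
The plan is to reduce the DDE \eqref{eq: dde parameter} to the finite‑dimensional ODE \eqref{eq: ode hopf} on a parameter‑dependent centre manifold and then to invoke Theorem \ref{thm: hopf bif ode}; the substantive work is to show that under this reduction the scalar data $g$, $L(\alpha)$ and $\Delta_0$ produce exactly the coefficient $c_0$ of \eqref{eq: c dde}. First I would place \eqref{eq: dde parameter} in the variation‑of‑constants (sun-star) framework on $X = C([-1,0],\mathbb{R})$: writing $u(t) = x_t$, the equation becomes $\dot u(t) = \mathcal{A}(\alpha)u(t) + \mathcal{R}(u(t),\alpha)$, where $\mathcal{A}(\alpha)$ generates the $C_0$‑semigroup of the linearisation $\dot x(t) = L(\alpha)x_t$ — so the eigenvalues of $\mathcal{A}(\alpha)$ are precisely the roots of $\Delta_0(\cdot,\alpha) = 0$, with eigenfunction $\varepsilon_\lambda$ of \eqref{eq:exp-fct} at a root $\lambda$ — and $\mathcal{R}(\cdot,\alpha)$ is the $C^k$ nonlinearity built from $g$, which in this framework enters through a one‑dimensional subspace of the sun-star dual (it is ``concentrated at $\theta = 0$'') and, by Hypothesis \ref{hyp: condities functies dde}, vanishes to second order in its first argument. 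By condition (1), $i\omega_0$ (hence $-i\omega_0$) is an algebraically simple eigenvalue of $\mathcal{A}(\alpha_0)$, so there is a spectral projection onto the complex line $\mathbb{C}\phi$, $\phi := \varepsilon_{i\omega_0}$, and onto its conjugate; the adjoint eigenvector, normalised so that its pairing with $\phi$ equals $1$, carries the scalar normalising constant $D_1\Delta_0(i\omega_0,\alpha_0) = 1 - L(\alpha_0)[\theta \mapsto \theta e^{i\omega_0\theta}]$, which is the origin of the factor $D_1\Delta_0(i\omega_0,\alpha_0)^{-1}$ in front of every term of \eqref{eq: c dde}.

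Since \eqref{eq: dde parameter} has only finitely many characteristic roots on the imaginary axis, the parametrised centre manifold theorem in the sun-star setting yields a locally invariant $C^{k-1}$ centre manifold of finite dimension, containing all sufficiently small bounded solutions (in particular all small periodic orbits near the bifurcation), on which \eqref{eq: dde parameter} reduces to a smooth ODE of the form \eqref{eq: ode hopf} — with $x = 0$ an equilibrium for all $\alpha$, since the trivial solution lies on the manifold, so Hypothesis \ref{hyp: condities functies ode} holds — having a pair of simple complex‑conjugate eigenvalues through $i\omega_0$ at $\alpha_0$. Conditions (1)--(3) of Theorem \ref{thm: hopf bifurcation dde} become conditions (1)--(3) of Theorem \ref{thm: hopf bif ode} for this reduced ODE; in particular, implicit differentiation of $\Delta_0(\lambda(\alpha),\alpha) \equiv 0$ gives $\lambda'(\alpha_0) = -D_1\Delta_0(i\omega_0,\alpha_0)^{-1}D_2\Delta_0(i\omega_0,\alpha_0)$, so the crossing velocity of the reduced ODE is $-\re\bigl(D_1\Delta_0(i\omega_0,\alpha_0)^{-1}D_2\Delta_0(i\omega_0,\alpha_0)\bigr)$, nonzero by condition (2), and condition (3) provides the non‑resonance needed for Theorem \ref{thm: hopf bif ode} and, below, for the invertibility of the resolvents at $0$ and $2i\omega_0$. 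Theorem \ref{thm: hopf bif ode} then supplies the periodic solution $x^\ast(\epsilon)$, the even functions $\alpha^\ast,\omega^\ast$ with $\alpha^\ast(0) = \alpha_0$ and $\omega^\ast(0) = \omega_0$, the expansion $\alpha^\ast(\epsilon) = \alpha_0 + a_{20}\epsilon^2 + o(\epsilon^2)$, and the local uniqueness of small periodic orbits; the last carries over to \eqref{eq: dde parameter} precisely because such solutions lie on the centre manifold. Substituting $\re\lambda'(\alpha_0) = -\re(D_1\Delta_0^{-1}D_2\Delta_0)$ into the ODE formula $a_{20} = -\re c/\re\lambda'(\alpha_0)$ flips the sign and yields the stated expression for $a_{20}$, provided the reduced‑ODE coefficient $c$ equals $c_0$.

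Identifying $c_0$ is the main obstacle. This requires the second‑order Taylor jet of the centre manifold, obtained by solving two linear equations in $X$: one ``at $\zeta = 0$'' forced by $D_1^2 g(0,\alpha_0)(\phi,\overline{\phi})$ and one ``at $\zeta = 2i\omega_0$'' forced by $D_1^2 g(0,\alpha_0)(\phi,\phi)$, both solvable by condition (3). Since $\mathcal{R}$ acts through a one‑dimensional subspace of the sun-star dual, these are rank‑one problems whose solutions are exponentials: the resolvent of $\mathcal{A}(\alpha_0)$ at $\zeta$ applied to such a forcing is $\varepsilon_\zeta$ times the scalar $\Delta_0(\zeta,\alpha_0)^{-1}$, which is exactly where the factors $\varepsilon_0\Delta_0(0,\alpha_0)^{-1}$ and $\varepsilon_{2i\omega_0}\Delta_0(2i\omega_0,\alpha_0)^{-1}$ in \eqref{eq: c dde} come from. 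Feeding these quadratic coefficients into the cubic part of the reduced vector field and projecting onto $\mathbb{C}\phi$ with the normalised adjoint eigenvector — which contributes the outer factor $D_1\Delta_0(i\omega_0,\alpha_0)^{-1}$ and evaluates $D_1^2 g$ and $D_1^3 g$ at the appropriate arguments — collects precisely the three terms of \eqref{eq: c dde}. I expect this bookkeeping to be the hard part: tracking how $\mathcal{R}$ acts through the sun-star duality, the spectral projections and the resolvents, so that the exponential representatives $\varepsilon_0,\varepsilon_{2i\omega_0}$ and the scalars $\Delta_0(0,\alpha_0)^{-1}$, $\Delta_0(2i\omega_0,\alpha_0)^{-1}$, $D_1\Delta_0(i\omega_0,\alpha_0)^{-1}$ appear with the correct arguments, is delicate; it is in essence the DDE‑versus‑ODE ``dictionary'' exploited in the remainder of the paper. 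Everything else (existence and smoothness of the centre manifold, the computation of $c$ for the reduced ODE, and the final appeal to Theorem \ref{thm: hopf bif ode}) is routine in this framework; for full details see \cite[Chapter X]{sunstarbook} and \cite{Lani-Wayda13}.
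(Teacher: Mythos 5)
Your proposal is correct and follows exactly the route the paper itself relies on: the paper gives no proof of this theorem but cites the sun-star centre manifold reduction of \cite[Chapter X]{sunstarbook}, which is precisely the reduction you sketch, including the correct origin of the factors $D_1\Delta_0(i\omega_0,\alpha_0)^{-1}$, $\varepsilon_0\Delta_0(0,\alpha_0)^{-1}$ and $\varepsilon_{2i\omega_0}\Delta_0(2i\omega_0,\alpha_0)^{-1}$ and the sign bookkeeping $\lambda'(\alpha_0)=-D_1\Delta_0(i\omega_0,\alpha_0)^{-1}D_2\Delta_0(i\omega_0,\alpha_0)$ that turns $a_{20}=-\re c/\re\lambda'(\alpha_0)$ into the stated formula. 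No gaps beyond the standard details you correctly defer to \cite[Chapter X]{sunstarbook}.
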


\section{Pseudospectral approximation} \label{sec: ps approximation}

In order to approximate the infinite dimensional dynamical system corresponding to the DDE \eqref{eq: dde parameter} by a finite dimensional ODE, we first approximate elements of the state space
\begin{align*}
X = C \left([-1, 0], \mathbb{R}\right)
\end{align*}
by polynomials interpolating their values in a chosen set of mesh points. 

{\color{black}Given $n \in \mathbb{N}$ and given a mesh $-1 \leq \theta_n < \ldots < \theta_0 = 0$,} the corresponding \emph{Lagrange polynomials} $\ell_j : [-1,0] \to \mathbb{R}$ are defined by
\begin{align}\label{eq: Lagrange polynomials}
\ell_j(\theta) = \prod_{\substack{0 \leq m \leq n \\ m \neq j}} \frac{\theta - \theta_m}{\theta_j - \theta_m},\quad-1 \le \theta \le 0,\qquad j = 0,1,\ldots,n. 
\end{align}
The properties 
\begin{equation}\label{eq: sum lagrange polynomials}
\sum_{j = 0}^n \ell_j(\theta) \equiv 1\quad\mbox{and}\quad
\ell_j(\theta_i) = \delta_{ij} = \begin{cases}
1 \quad \mbox{if } i = j \\
0 \quad \mbox{if } i \neq j
\end{cases}
\end{equation}
make the Lagrange polynomials suitable building blocks for interpolation{\color{black}, especially since Lagrange interpolation can be implemented in a stable and efficient way by using barycentric interpolation \cite{berrut}.} \\

A DDE is a rule for extending a known history. It defines a dynamical system on the state space of history functions by shifting along the extended function, i.e., by updating the history. This involves that we distinguish the time variable $t$ from the bookkeeping variable $\theta$, needed to describe the history. In particular, we approximate
\begin{align} \label{eq: hist-approx}
x(t+\theta) \sim \sum_{j = 0}^n  \ell_j(\theta) y_j(t),\qquad -1 \le \theta \le 0. 
\end{align}
For the left hand side of \eqref{eq: hist-approx}, the derivative with respect to $t$ equals the derivative with respect to $\theta$. The idea of \emph{collocation} is to require that this is also true for the right hand side of \eqref{eq: hist-approx} at the mesh points $\theta_k, k = 1, \ldots, n$. This condition leads to the following system of differential equations 
\begin{align} \label{eq: collocation}
y_k'(t) = \sum_{j = 0}^n \ell_j'(\theta_k) y_j(t), \quad k = 1, \ldots n. 
\end{align}
By defining 
\begin{align} \label{eq: differentiation matrix}
D: \mathbb{R}^n \to \mathbb{R}^n, \qquad D_{ij} = \ell_j'(\theta_i), \quad i, j = 1, \ldots, n
\end{align}
and taking into account that \eqref{eq: sum lagrange polynomials} implies that
\begin{align*}
\ell_0'(\theta) = - \sum_{j = 1}^n \ell_j'(\theta)
\end{align*}
we can rewrite \eqref{eq: collocation}, using the notation $\mathbf{1}  = (1, \ldots, 1)^T \in \mathbb{R}^n$, as
\begin{align} \label{eq: collocation with D}
y' = D y - y_0 D \mathbf{1}, 
\end{align}
where $y$ is the $n$-vector with components $y_k, k = 1, \ldots, n$. Note that \eqref{eq: collocation with D} is universal in the sense that it does \emph{not} depend on the specific DDE under consideration. 

The differential equation \eqref{eq: collocation with D} approximately captures the translation aspect of the dynamics. The equation for $y_0$ (corresponding to the value of {\color{black}$x_t$} in $\theta_0=0$) captures the specific rule for extension specified by the DDE. Define $P : \mathbb{R}^{n} \to X$ and $P_0 : \mathbb{R} \times \mathbb{R}^n \to X$ as, respectively,
\begin{subequations}
\begin{align} \label{eq: polynomial interpolation}
(Py)(\theta) &= \sum_{j = 1}^n \ell_j(\theta) y_j,\\
\label{eq: polynomial interpolation full}
\bigl(P_0(y_0,y)\bigr)(\theta) &= y_0\ell_0(\theta) + (Py)(\theta),
\end{align}
\end{subequations}
where $\ell_j$, $j=0,1,\ldots,n$, are defined by \eqref{eq: Lagrange polynomials}. We add to  \eqref{eq: collocation with D}
the differential equation
\begin{align}\label{eq: collocation with y0}
y_0' = LP_0(y_0,y) + g\bigl(P_0(y_0,y) \bigr)
\end{align}
to mimic the specific scalar DDE 
\begin{equation} \label{eq: dde}
x'(t) =Lx_t + g(x_t)
\end{equation}
with $L: X \to \mathbb{R}$ bounded linear and $g: X \to \mathbb{R}$. 

So we approximate the infinite dimensional dynamical system corresponding to \eqref{eq: dde} with the finite dimensional dynamical system generated by the ODE \eqref{eq: collocation with D} $\&$ \eqref{eq: collocation with y0}. This is summarised in the following definition:

\begin{defn}
\textup{
The \emph{pseudospectral approximation} to the parameterised DDE  (recall \eqref{eq: dde parameter})
\begin{align}\label{eq: dde parameter 2}
x'(t) = L(\alpha)x_t +g(x_t,\alpha)
\end{align}
is given by the parameterised system of ODE
\begin{align} \label{eq: pseudospectral ode parameter 1}
\frac{d}{dt}\begin{pmatrix}y_0 \\ y \end{pmatrix} = A_n(\alpha)\begin{pmatrix}y_0 \\ y \end{pmatrix} + g(P_0(y_0,y),\alpha) \begin{pmatrix}
1 \\0 
\end{pmatrix} , \quad t \geq 0,
\end{align}
where $A_n(\alpha) : \mathbb{R} \times \mathbb{R}^{n} \to \mathbb{R} \times \mathbb{R}^{n}$ is given by
\begin{align}\label{eq def An}
A_n(\alpha)  = \begin{pmatrix}
L(\alpha) \ell_0 & L(\alpha) P\\
- D \mathbf{1} & D 
\end{pmatrix}.
\end{align}
Here $y_0 \in \mathbb{R}, \ y \in \mathbb{R}^n$, {\color{black}$P$ is defined in \eqref{eq: polynomial interpolation}, $P_0$} is defined in \eqref{eq: polynomial interpolation full}, the matrix $D$ {\color{black}is defined} in \eqref{eq: differentiation matrix}, and the dimension $n$ is a parameter that we have suppressed in the  notation and in the terminology.}
\end{defn}

{\color{black}In the definition above, there is no restriction on the nodes. The theoretical results that we shall present below are, however, based on the following assumption:
\begin{assumption}  \label{assumption}  
If we consider the \emph{reduced} mesh $\{\theta_1,\dots, \theta_n\}$ and the corresponding Lagrange polynomials 
\begin{equation} \label{eq: reduced lagrange polynomials}
\tilde{\ell}_j(\theta) = \prod_{\substack{1 \leq m \leq n \\ m \neq j}} \frac{\theta - \theta_m}{\theta_j - \theta_m},\quad-1 \le \theta \le 0,\qquad j = 1,\ldots,n,
\end{equation}
then the associated \emph{Lebesgue constant} 
\[ \tilde{\Lambda}_n := \max_{\theta \in [-1, 0]} \sum_{j = 1}^n \left| \tilde{\ell}_j(\theta) \right| \] 
satisfies $\lim_{n \to \infty} \frac{\tilde{\Lambda}_n}{n} = 0$.
\end{assumption}
}

{\color{black}
Assumption \ref{assumption} is satisfied by the nodes
\begin{subequations}
\begin{align}
\theta_0 &= 0  \label{eq: node zero}\\
\theta_k &= \frac{1}{2} \left( \cos \left(\frac{2 k- 1}{2n} \pi\right) - 1 \right), \qquad k = 1, \ldots, n, \label{eq: chebyshev zeros}
\end{align}
\end{subequations}
which are the Chebyshev zeros \eqref{eq: chebyshev zeros} with an added node at $\theta = 0$ \cite[Chapter 1.4.6]{MastroianniBook}. 

The numerical computations in this paper are made using the \emph{Chebyshev extremal nodes} 
\begin{align} \label{eq: chebyshev nodes}
\theta_j = \frac{1}{2}\left(\cos\left(\frac{j \pi}{n}\right)-1 \right), \quad 0 \leq j \leq n.
\end{align}
We choose to work with the Chebyshev extremal nodes (rather than with \eqref{eq: node zero}--\eqref{eq: chebyshev zeros}) since for Chebyshev extremal nodes the matrix $D$ in \eqref{eq: differentiation matrix} can be numerically computed in a reliable and efficient way \cite{trefethen}. 
However, if we consider the reduced mesh $\{\theta_1,\dots, \theta_n\}$, then the corresponding Lebesgue constant $\tilde{\Lambda}_n$ is only known to behave like $O(n)$ \cite[Chapter 4.2]{MastroianniBook}. So based on this estimate, the nodes \eqref{eq: chebyshev nodes} do not satisfy Assumption \ref{assumption}. Yet in practice we observe the fast convergence expected from meshes of nodes satisfying Assumption \ref{assumption}, see also \cite{ps-sunstar}. So a remaining challenge is to find an analytical argument that also covers the nodes \eqref{eq: chebyshev nodes}. 

}

\medskip

We remark that if $\overline{x}$ is a steady state of \eqref{eq: dde parameter 2}, then \eqref{eq: pseudospectral ode parameter 1} has a steady state $y_0 = \overline{x}, \ y = \overline{x} \textbf{1}$. {\color{black}Conversely, if $(\overline{y}_0, \overline{y})$ is a steady state of \eqref{eq: pseudospectral ode parameter 1}, then \eqref{eq: collocation} implies that 
\[
P_0(\overline{y}_0, y)'(\theta) = \ell_0'(\theta) \overline{y}_0 + \sum_{j = 1}^n \ell_j'(\theta) y_j
\]
is zero at $\theta = \theta_1, \ldots, \theta_n$. So $P_0(\overline{y}_0, y)'$ is a polynomial of degree $n-1$ with $n$ zeros, which implies that $P_0(\overline{y}_0, y)' \equiv 0$ and $P_0(\overline{y}_0,y)$ is the constant function taking the value $\overline{y}_0$. Therefore $\overline{y}_0$ is a steady state of \eqref{eq: dde parameter 2}.} So, steady states of \eqref{eq: dde parameter 2} and \eqref{eq: pseudospectral ode parameter 1} are in one-to-one correspondence. 

\smallskip

Note that in the pseudospectral approximation \eqref{eq: pseudospectral ode parameter 1}, the nonlinear terms only appear in the equation for $y_0$ and hence the range of the nonlinear perturbation is contained in a one-dimensional subspace. The formula
\begin{equation*}
y(t) = -\int_{-\infty}^t y_0(\tau) e^{(t-\tau)D}  D \mathbf{1}\,d\tau
\end{equation*}
expresses $y$ explicitly in terms of $y_0$ when we consider $y_0$ as given on $(-\infty,t]$. If we substitute this into the differential equation for $y_0$, we obtain a DDE with infinite delay \cite{infinitedelay}. Note that periodic $y_0$ yields periodic (with the same period) $y$. The remark about steady states amounts to: constant $y_0$ yield $y(t) = y_0 \mathbf{1}$.

\subsection*{Characteristic equation}
If $g(0,\alpha) = 0$ and $D_1g(0,\alpha) = 0$, then the linearisation of \eqref{eq: dde  parameter 2} around zero, i.e.,
\begin{align}  \label{eq: linear dde}
x'(t) = L(\alpha)x_t, \quad t \geq 0
\end{align}
has, as mentioned before, a nonzero solution of the form $x(t) = e^{\lambda t}$ if and only if $\lambda$ is a root of the characteristic equation \eqref{eq: ce dde parameter}.

The linearisation of the pseudospectral approximation \eqref{eq: pseudospectral ode parameter 1} of \eqref{eq: dde  parameter 2} around zero has a nontrivial solution of the form $e^{\lambda t} (\zeta_0,\zeta)$ if and only if $\lambda$ is an eigenvalue of \eqref{eq def An} with eigenvector $(\zeta_0, \zeta)$, i.e., if and only if
\begin{subequations}
\begin{align}
\lambda \zeta_0 &= L(\alpha) (\zeta_0 \ell_0 + P\zeta) \label{eq: eigenvalue 1} \\
\lambda \zeta & = D \zeta - \zeta_0 D \mathbf{1} \label{eq: eigenvalue 2}
\end{align}
\end{subequations}
has a nontrivial solution $(\zeta_0, \zeta) \in \mathbb{C}^{n+1}$. {\color{black} We prove in Lemma \ref{lem: convergence collocation} that for $\lambda $ in a given right compact subset of $\mathbb{C}$, $D - \lambda I$ is invertible for $n$ large enough.} Equation \eqref{eq: eigenvalue 2} then implies that 
\begin{equation} \label{eq: def psi}
\zeta = \zeta_0 (D- \lambda I)^{-1} D \mathbf{1}
\end{equation}
and inserting this into \eqref{eq: eigenvalue 1} we obtain that
\begin{align} \label{eq: ce ode 1}
\bigl[\lambda - L(\alpha)\left(\ell_0 + P(D-\lambda I)^{-1} D \mathbf{1}\right)\bigr]\zeta_0 = 0.
\end{align}
This shows that eigenvalues of $A_n(\alpha)$ as defined in \eqref{eq def An} correspond to roots of the characteristic equation 
\begin{align} \label{eq: ce ode}
\Delta_n(\lambda,\alpha) = 0\quad \mbox{with} \quad\Delta_n(\lambda,\alpha) := \lambda - L(\alpha) \left(\ell_0 + P(D-\lambda I)^{-1} D \mathbf{1}\right).
\end{align}
Here the subscript $n$ in the definition of $\Delta_n(\lambda,\alpha)$ specifies the dimension of the approximation. If $\lambda$ is a root of \eqref{eq: ce ode}, then a corresponding eigenvector of $A_n(\alpha)$ is given by 
\begin{equation} \label{eq def p An}
(p_\ast, \tilde{p}) = (1, (D-\lambda I)^{-1} D \textbf{1}). 
\end{equation}

The correspondence between eigenvalues of $A_n(\alpha)$ and roots of $\Delta_n(\lambda, \alpha) = 0$ is analogous to the correspondence between eigenvalues of the generator of translation along solutions of the linearised DDE \eqref{eq: linear dde} and the roots of the characteristic equation $\Delta_0(\lambda, \alpha) = 0$. 

\subsection*{Hopf bifurcation for the pseudospectral approximation}

In order to relate Hopf bifurcation for the DDE \eqref{eq: dde parameter 2} to Hopf bifurcation for the pseudospectral approximation \eqref{eq: pseudospectral ode parameter 1}, we first reformulate Theorem \ref{thm: hopf bif ode} for ODE of the special form \eqref{eq: pseudospectral ode parameter 1}.

The resolvent of $A_n(\alpha) : \mathbb{C} \times \mathbb{C}^{n} \to \mathbb{C} \times \mathbb{C}^{n}$ defined by the complexification of \eqref{eq def An} can be computed explicitly. From $(\lambda I - A_n(\alpha))^{-1}(\zeta_0,\zeta) = (\eta_0,\eta)$ it follows that 
\begin{subequations}
\begin{align}
\zeta_0 &= \lambda \eta_0 - L(\alpha)\ell_0 \eta_0 - L(\alpha)P\eta \label{eq: resolvent ps 1} \\
\zeta & = \lambda \eta +  D \mathbf{1}\eta_0 - D\eta  \label{eq: resolvent ps 2}
\end{align}
\end{subequations}
Since $D - \lambda I$ is invertible for $n$ large enough, we can solve for $\eta$ in terms of $\zeta$ and $\eta_0$ from \eqref{eq: resolvent ps 2}. Substitution of the result in \eqref{eq: resolvent ps 1} then yields
\begin{equation} \label{eq: resolvent ps} 
(\lambda I - A_n(\alpha))^{-1} \begin{pmatrix} \zeta_0 \\ \zeta \end{pmatrix} = \Delta_n(\lambda,\alpha)^{-1} \bigl(\zeta_0 + L(\alpha)P(\lambda I - D)^{-1}\zeta\bigr)\begin{pmatrix}
1 \\ (D-\lambda I)^{-1} D \mathbf{1}
\end{pmatrix} + \begin{pmatrix}
0 \\
(\lambda I - D)^{-1} \zeta
\end{pmatrix}.
\end{equation}

If $\Delta_n(\lambda,\alpha) = 0$ and $D_1\Delta_n(\lambda,\alpha) \not= 0$, the residue of the right hand side of \eqref{eq: resolvent ps} in $\lambda$ defines a projection operator
\begin{equation} \label{eq: projection ps} 
Q_n \begin{pmatrix} \zeta_0 \\ \zeta \end{pmatrix} = D_1\Delta_n(\lambda,\alpha)^{-1} \bigl(\zeta_0 + L(\alpha)P(\lambda I - D)^{-1}\zeta\bigr)\begin{pmatrix}
1 \\ (D-\lambda I)^{-1} D \mathbf{1}
\end{pmatrix} 
\end{equation}
which is of the form
\[Q_n\begin{pmatrix} \zeta_0 \\ \zeta \end{pmatrix} = \bigl(q_\ast \cdot \zeta_0 + \tilde{q} \cdot \zeta \bigr)\begin{pmatrix}
1 \\ (D-\lambda I)^{-1} D \mathbf{1}
\end{pmatrix} 
\]
with $(q_\ast, \tilde{q})$ the adjoint eigenvector to the eigenvalue $\lambda$ of $A_n(\alpha)$, normalised such that
\begin{equation*}
 (q_\ast, \ \tilde{q}) \cdot \begin{pmatrix}
p_\ast \\ \tilde{p}
\end{pmatrix} = 1. 
\end{equation*}
Since $L(\alpha)P \tilde y = \sum_{j=1}^n L(\alpha)\ell_j \tilde y_j$ we find that
\begin{equation} \label{eq def q An} 
q_\ast = \frac{1}{D_1\Delta_n(\lambda,\alpha)} , \qquad \tilde{q} = \frac{1}{D_1\Delta_n(\lambda,\alpha)} (\lambda I - D^T)^{-1}\begin{pmatrix}
L(\alpha)\ell_1 \\ \vdots \\ L(\alpha)\ell_n \end{pmatrix}.
\end{equation}
We can also compute the adjoint eigenvector from \eqref{eq def An}, giving the same result. 

\medskip

Recall the condition
\begin{align*}
\re \left(q \cdot  A'(\alpha_0) p\right) \neq 0,
\end{align*}
in Theorem \ref{thm: hopf bif ode}. From the definition of $A_n(\alpha)$ in \eqref{eq def An} we obtain
\begin{align}\label{eq def An dif}
  A_n' (\alpha)  = \begin{pmatrix}
D_\alpha L(\alpha) \ell_0 & D_\alpha L(\alpha) P\\
0 & 0
\end{pmatrix}.
\end{align}
So using the definitions for the right eigenvector $(p_\ast, \tilde{p})$ in \eqref{eq def p An} and the left eigenvector $(q_\ast, \tilde{q})$ in \eqref{eq def q An} for $\lambda = i \omega$, it follows that
\[(q_\ast, \tilde{q}) \cdot  A_n'(\alpha) \begin{pmatrix}
p_\ast \\ \tilde{p}
\end{pmatrix} = -D_1 \Delta_n(i\omega,\alpha)^{-1}D_2 \Delta_n(i\omega,\alpha).\]
Finally observe from \eqref{eq: pseudospectral ode parameter 1} that the nonlinearity only acts in the first component of the equation.  Therefore the formula for $c$ in Theorem \ref{thm: hopf bif ode} becomes in the present setting
\begin{align*}
c= 
&D_1 \Delta_n(i \omega, \alpha)^{-1}  \frac{1}{2} D_1^3 g(0, \alpha) \bigl(P_0 p, P_0 p, P_0\overline{p}\bigr) \\\ & \qquad +  D_1 \Delta_n(i \omega, \alpha)^{-1}  D_1^2 g(0, \alpha) \Bigl(-P_0 \left( A_n(\alpha)^{-1} \begin{pmatrix}
1 \\ 0\end{pmatrix} \right)  D_1^2 g(0, \alpha) \bigl(P_0p, P_0\overline{p}\bigr), P_0 p\Bigr) \\
&\qquad + D_1 \Delta_n( 2i \omega, \alpha)^{-1}  \frac{1}{2} D_1^2 g(0, \alpha) \Bigl( P_0 \left((2 i \omega - A_n(\alpha))^{-1} \begin{pmatrix}
1 \\0 \end{pmatrix} \right) D_1^2 g(0, \alpha) \bigl(P_0p, P_0p\bigr), P_0\overline{p}\Bigr)
\end{align*}
with $p = (1, (D- i \omega)^{-1} D \textbf{1})$. 
From \eqref{eq: resolvent ps} it follows that
\begin{equation*}
(\lambda I - A_n(\alpha))^{-1} \begin{pmatrix} 1 \\ 0 \end{pmatrix} = \Delta_n(\lambda,\alpha)^{-1}\begin{pmatrix}
1 \\ (D-\lambda I)^{-1} D \mathbf{1}
\end{pmatrix}.
\end{equation*}
We are now ready to apply Theorem \ref{thm: hopf bif ode} to the pseudospectral approximation \eqref{eq: pseudospectral ode parameter 1}.

\begin{theorem}[Hopf bifurcation in pseudospectral ODE] \label{thm: hopf ps}

Consider the system \eqref{eq: pseudospectral ode parameter 1} and suppose that Hypothesis \ref{hyp: condities functies dde} is satisfied. If there exist $\alpha_n \in \mathbb{R}$ and {\color{black}$\omega_n > 0$} such that 
\begin{enumerate}
\item $i \omega_n$ is a simple root of $\Delta_n(\lambda, \alpha_n) = 0$;
\item the branch of roots of $\Delta_n(\lambda, \alpha) = 0$ through $i\omega_n$ at $\alpha = \alpha_n$ intersects the imaginary axis transversally, i.e., the real part of the derivative of the roots along the branch is non-zero. This condition amounts to
\begin{align*}
\re \bigl(D_1 \Delta_n(i\omega_n,\alpha_n)^{-1}D_2 \Delta_n(i\omega_n,\alpha_n) \bigr) \neq 0,
\end{align*}
\item {\color{black}$k i \omega_n$ is not a root of $\Delta_n(\lambda, \alpha_n)=  0$ for $k = 0, 2, 3, \ldots$}
\end{enumerate}
then a Hopf bifurcation occurs for $\alpha = \alpha_n$. 

Moreover, $\alpha^\ast$ as in Theorem \ref{thm: hopf bif ode} has the expansion $\alpha^\ast(\epsilon) = \alpha_n + a_{2n} \epsilon^2 + o(\epsilon^2)$, with $a_{2n}$ given by
\begin{align*}
a_{2n} = \frac{\re c_n}{\re \bigl(D_1 \Delta_n(i\omega_n,\alpha_n)^{-1} D_2 \Delta_n(i\omega_n,\alpha_n) \bigr) }
\end{align*}
with
\begin{equation} \label{eq: c ps}
\begin{aligned}
c_n = 
&D_1 \Delta_n(i \omega_n, \alpha_n)^{-1}  \frac{1}{2} D_1^3 g(0, \alpha_n) \bigl(P_0 p, P_0 p, P_0\overline{p}\bigr)  \\ & \qquad + D_1 \Delta_n(i \omega_n, \alpha_n)^{-1} D_1^2 g(0, \alpha_n)\Bigl( \Delta_n(0,\alpha_n)^{-1} P_0 \begin{pmatrix}
1 \\ \mathbf{1}  
\end{pmatrix}  D_1^2 g(0, \alpha_n) \bigl(P_0 p, P_0\overline{p}\bigr), P_0p\Bigr) \\
&\qquad + D_1 \Delta_n(i \omega_n, \alpha_n)^{-1} \frac{1}{2} D_1^2 g(0, \alpha_n) \Bigl(\Delta_n(2 i\omega_n,\alpha_n)^{-1} P_0\begin{pmatrix}
1 \\ (D-2 i\omega_n I)^{-1} D \mathbf{1} \end{pmatrix} D_1^2 g(0, \alpha_n)\bigl (P_0 p, P_0 p\bigr), P_0\overline{p}\Bigr).
\end{aligned}
\end{equation}
and $p = (1, (D-i \omega_n)^{-1} D \textbf{1})$ the right eigenvector to $A_n( \alpha_n)$ with eigenvalue $i \omega_n$. 
\end{theorem}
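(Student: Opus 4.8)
The plan is to derive Theorem~\ref{thm: hopf ps} by applying Theorem~\ref{thm: hopf bif ode} to the ODE~\eqref{eq: pseudospectral ode parameter 1} and then rewriting every quantity that appears in the conclusion in terms of $\Delta_n$, using the explicit resolvent~\eqref{eq: resolvent ps}, the right eigenvector~\eqref{eq def p An} and the adjoint eigenvector~\eqref{eq def q An} that were computed above. The bridge between the spectral language of Theorem~\ref{thm: hopf bif ode} (eigenvalues of $A_n(\alpha)$) and the characteristic-function language of the statement ($\Delta_n$) is the factorisation
\[
\det\bigl(\lambda I - A_n(\alpha)\bigr) = \det(\lambda I - D)\,\Delta_n(\lambda,\alpha),
\]
valid for $\lambda\notin\sigma(D)$; it follows from a Schur-complement expansion with respect to the lower-right block $\lambda I - D$ of $\lambda I - A_n(\alpha)$, together with the identity $\lambda - L(\alpha)\ell_0 + L(\alpha)P(\lambda I - D)^{-1}D\mathbf 1 = \Delta_n(\lambda,\alpha)$. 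Combined with the eigenvalue/characteristic-root correspondence around~\eqref{eq: ce ode} and the eigenvectors~\eqref{eq def p An}, \eqref{eq def q An}, this reduces the whole theorem to a substitution.

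First I would translate the three hypotheses. Since $\Delta_n(i\omega_n,\alpha_n)$ is defined we have $i\omega_n\notin\sigma(D)$, so by the factorisation $i\omega_n$ is an algebraically (hence geometrically) simple eigenvalue of $A_n(\alpha_n)$ if and only if $D_1\Delta_n(i\omega_n,\alpha_n)\neq0$, i.e.\ if and only if $i\omega_n$ is a simple root of $\Delta_n(\cdot,\alpha_n)$; this is condition~(1). By the implicit function theorem the branch of eigenvalues of $A_n(\alpha)$ through $i\omega_n$ coincides, for $\alpha$ near $\alpha_n$, with the branch of roots of $\Delta_n(\cdot,\alpha)$, and the identity $(q_\ast,\tilde q)\cdot A_n'(\alpha_n)(p_\ast,\tilde p) = -D_1\Delta_n(i\omega_n,\alpha_n)^{-1}D_2\Delta_n(i\omega_n,\alpha_n)$, already derived from~\eqref{eq def An dif}, shows that $\re\bigl(q\cdot A_n'(\alpha_n)p\bigr)\neq0$ is exactly condition~(2). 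For condition~(3), the factorisation gives $ki\omega_n\in\sigma(A_n(\alpha_n))$ if and only if $ki\omega_n\in\sigma(D)$ or $\Delta_n(ki\omega_n,\alpha_n)=0$; reading ``$ki\omega_n$ is not a root of $\Delta_n(\lambda,\alpha_n)=0$'' as also requiring $ki\omega_n\notin\sigma(D)$ (which is implicit, since otherwise $\Delta_n(ki\omega_n,\alpha_n)$ is not even defined) yields condition~(3). Theorem~\ref{thm: hopf bif ode} then applies and produces the Hopf bifurcation together with $\alpha^\ast(\epsilon)=\alpha_n+a_{2n}\epsilon^2+o(\epsilon^2)$ and $a_{2n}=-\re c/\re\bigl(q\cdot A_n'(\alpha_n)p\bigr)$; the transversality identity turns the denominator into $\re\bigl(D_1\Delta_n(i\omega_n,\alpha_n)^{-1}D_2\Delta_n(i\omega_n,\alpha_n)\bigr)$, which is the stated formula once $c=c_n$.

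It remains to match $c$ with $c_n$ in~\eqref{eq: c ps}. The only structural input is that the nonlinearity in~\eqref{eq: pseudospectral ode parameter 1} takes values in a one-dimensional subspace: $f(x,\alpha)=g(P_0 x,\alpha)\binom{1}{0}$, hence $D_1^m f(0,\alpha_n)(v_1,\dots,v_m)=D_1^m g(0,\alpha_n)(P_0 v_1,\dots,P_0 v_m)\binom{1}{0}$ for $m=2,3$. Pairing with the adjoint eigenvector gives the scalar $(q_\ast,\tilde q)\cdot\binom{1}{0}=q_\ast=D_1\Delta_n(i\omega_n,\alpha_n)^{-1}$, so each of the three terms of $c$ acquires exactly the prefactor $D_1\Delta_n(i\omega_n,\alpha_n)^{-1}$ times a $g$-expression. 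The inverse operators occurring inside $c$ are evaluated via~\eqref{eq: resolvent ps}: since $(\lambda I - A_n(\alpha_n))^{-1}\binom{1}{0}=\Delta_n(\lambda,\alpha_n)^{-1}\binom{1}{(D-\lambda I)^{-1}D\mathbf 1}$, taking $\lambda=0$ gives $-A_n(\alpha_n)^{-1}\binom{1}{0}=\Delta_n(0,\alpha_n)^{-1}\binom{1}{\mathbf 1}$ and taking $\lambda=2i\omega_n$ gives $(2i\omega_n - A_n(\alpha_n))^{-1}\binom{1}{0}=\Delta_n(2i\omega_n,\alpha_n)^{-1}\binom{1}{(D-2i\omega_n I)^{-1}D\mathbf 1}$. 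Substituting these, using linearity of $P_0$ to pull the scalar $D_1^2 g(0,\alpha_n)(P_0 p,P_0\overline p)$ out of $P_0$, and collecting terms yields precisely~\eqref{eq: c ps} with $p=(1,(D-i\omega_n I)^{-1}D\mathbf 1)$.

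The computation itself is essentially bookkeeping, because the resolvent, the right eigenvector and the adjoint eigenvector of $A_n(\alpha)$ are already explicit and the range of the nonlinearity is one-dimensional; this is exactly what collapses the abstract $c$ of Theorem~\ref{thm: hopf bif ode} into~\eqref{eq: c ps}. The point that needs genuine care is the hypothesis translation in the presence of the spurious factor $\det(\lambda I - D)$: one must be certain that the values $i\omega_n$ and $ki\omega_n$ relevant to conditions~(1)--(3) are not contaminated by $\sigma(D)$, and that ``simple root of $\Delta_n$'' (equivalently $D_1\Delta_n(i\omega_n,\alpha_n)\neq0$) is the correct reformulation of ``simple eigenvalue of $A_n$'', together with the coincidence near $\alpha_n$ of the eigenvalue branch of $A_n$ and the root branch of $\Delta_n$. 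Everything else is substitution into the formulas of Theorem~\ref{thm: hopf bif ode}.
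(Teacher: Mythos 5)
Your proposal is correct and follows essentially the same route as the paper: the paper likewise proves Theorem \ref{thm: hopf ps} by applying Theorem \ref{thm: hopf bif ode} to \eqref{eq: pseudospectral ode parameter 1}, using the explicit eigenvectors \eqref{eq def p An} and \eqref{eq def q An}, the identity $(q_\ast,\tilde q)\cdot A_n'(\alpha)(p_\ast,\tilde p)=-D_1\Delta_n(i\omega,\alpha)^{-1}D_2\Delta_n(i\omega,\alpha)$, the one-dimensional range of the nonlinearity, and the resolvent formula \eqref{eq: resolvent ps} evaluated at $\lambda=0$ and $\lambda=2i\omega_n$ to collapse the abstract coefficient $c$ into \eqref{eq: c ps}. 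The only (cosmetic) difference is that you justify the translation of conditions (1) and (3) through the Schur-complement factorisation $\det(\lambda I - A_n(\alpha))=\det(\lambda I - D)\,\Delta_n(\lambda,\alpha)$, while the paper relies on the eigenvalue--root correspondence established around \eqref{eq: ce ode} and the residue computation \eqref{eq: projection ps}; your explicit attention to possible contamination by $\sigma(D)$ is, if anything, slightly more careful than the paper's treatment.
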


\medskip

In the following sections we investigate the issue of convergence.

\section{Approximation of spectral data of linear problems}

Comparing the characteristic equations \eqref{eq: ce dde parameter} and \eqref{eq: ce ode}, we see that the following variant of a result from \cite[Lemma 3.2]{bmv}, \cite[Proposition 5.1]{bmvboek} is relevant; we include its proof for completeness. 

\begin{lemma} \label{lem: convergence collocation}
{\color{black}Let $U \subseteq \mathbb{C}$ be a compact subset. Then there exist a positive integer $N = N(U)$ and a constant $C > 0$ such that for $n \geq N$ and $\lambda \in U$, $D-\lambda I$ is invertible and 
\begin{align} \label{eq: error estimate}
\norm{\ell_0  + P ( D-\lambda I)^{-1} D \textbf{1} - \varepsilon_\lambda}  \leq \frac{1}{\sqrt{n}} \left( \frac{C}{n}\right)^n 
\end{align}
with $\varepsilon_\lambda$ defined as in \eqref{eq:exp-fct}.
}
\end{lemma}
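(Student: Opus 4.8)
The plan is to identify the quantity $\ell_0 + P(D-\lambda I)^{-1} D\mathbf 1$ as the result of a collocation-type approximation of the exponential $\varepsilon_\lambda$, and to estimate the error by interpolation theory. First I would observe that $\varepsilon_\lambda$ itself satisfies $\varepsilon_\lambda' = \lambda \varepsilon_\lambda$ with $\varepsilon_\lambda(0) = 1$, so $\varepsilon_\lambda$ is the (exact) solution of the linear ODE that collocation is trying to solve; the vector $z := (D-\lambda I)^{-1} D\mathbf 1 \in \mathbb C^n$ is precisely the vector of nodal values (at $\theta_1,\dots,\theta_n$) determined by imposing, at those same nodes, that the interpolating polynomial $q := P_0(1,z)$ through $(\theta_0,1),(\theta_1,z_1),\dots,(\theta_n,z_n)$ satisfies $q'(\theta_k) = \lambda q(\theta_k)$, $k=1,\dots,n$ — this is just a rewriting of \eqref{eq: collocation with D}/\eqref{eq: eigenvalue 2}. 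Thus $\ell_0 + P z = q$, and the claim \eqref{eq: error estimate} is a bound on $\norm{q - \varepsilon_\lambda}$ in the sup-norm on $[-1,0]$.

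Next I would set up the standard comparison between the collocation polynomial $q$ and the \emph{interpolant} $\Pi_n \varepsilon_\lambda$ of $\varepsilon_\lambda$ on the full mesh $\{\theta_0,\dots,\theta_n\}$. Writing $e := q - \Pi_n \varepsilon_\lambda$, this is a polynomial of degree $\le n$ vanishing at $\theta_0$ (both equal $1$ there), whose derivative-residual at $\theta_1,\dots,\theta_n$ equals $\lambda(\Pi_n\varepsilon_\lambda)(\theta_k) - (\Pi_n\varepsilon_\lambda)'(\theta_k) = \lambda\varepsilon_\lambda(\theta_k) - (\Pi_n\varepsilon_\lambda)'(\theta_k)$, i.e. the defect of the exact solution under the collocation scheme. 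Since $e(\theta_0)=0$, $e' = \tilde P(\text{something})$ can be expanded in the reduced Lagrange basis $\{\tilde\ell_j\}_{j=1}^n$ of degree $n-1$ with coefficients the residuals $\lambda\varepsilon_\lambda(\theta_j) - (\Pi_n\varepsilon_\lambda)'(\theta_j)$; integrating from $0$ and taking sup-norms, $\norm{e} \le \tilde\Lambda_n \max_j |\lambda\varepsilon_\lambda(\theta_j) - (\Pi_n\varepsilon_\lambda)'(\theta_j)|$ (up to a harmless constant from the length of the interval). The residual $\lambda\varepsilon_\lambda(\theta_j) - (\Pi_n\varepsilon_\lambda)'(\theta_j) = (\varepsilon_\lambda - \Pi_n\varepsilon_\lambda)'(\theta_j)$ is controlled by the interpolation error for $\varepsilon_\lambda$: since $\varepsilon_\lambda$ is entire, the classical Hermite/contour estimate gives, for $\theta\in[-1,0]$, $|\varepsilon_\lambda(\theta) - (\Pi_n\varepsilon_\lambda)(\theta)| \le \frac{1}{(n+1)!}\max_{[-1,0]}|\varepsilon_\lambda^{(n+1)}| \prod_{m=0}^n|\theta-\theta_m| \le \frac{|\lambda|^{n+1} e^{|\lambda|}}{(n+1)!}$, and a Markov/derivative-of-interpolation-error bound upgrades this to a comparable estimate for the derivative at the nodes. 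Combining, and using that $(n+1)! \gtrsim (n/e)^n$, the bound $\norm{q-\varepsilon_\lambda} \le \norm{q - \Pi_n\varepsilon_\lambda} + \norm{\Pi_n\varepsilon_\lambda - \varepsilon_\lambda} \le \tilde\Lambda_n \cdot \frac{(C_1)^n}{n!} + \frac{(C_2)^n}{(n+1)!}$ reduces — using Assumption \ref{assumption}, $\tilde\Lambda_n = o(n)$, so $\tilde\Lambda_n/n! \lesssim 1/(n-1)!$ — to the claimed form $\frac{1}{\sqrt n}(C/n)^n$ after absorbing constants (Stirling: $1/n! \le \frac{1}{\sqrt{2\pi n}}(e/n)^n$), with $U$ compact supplying a uniform bound on $|\lambda|$ hence on $C$. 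Invertibility of $D-\lambda I$ for $n \ge N(U)$ follows from the same circle of ideas: if $D-\lambda I$ were singular, \eqref{eq: eigenvalue 2} would have a nontrivial solution with $\zeta_0 = 0$, i.e. a nonzero degree-$(n-1)$ polynomial $r$ with $r(0)=0$ and $r'(\theta_k) = \lambda r(\theta_k)$ at all $n$ nodes, forcing $r \equiv \varepsilon_\lambda\cdot(\text{const})$ up to an error that the estimate shows is too small to allow $r(0)=0$ unless $r\equiv 0$ — alternatively one quotes the cited \cite[Lemma 3.2]{bmv} directly.

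The main obstacle I anticipate is making the passage from the nodal residuals to the sup-norm bound on $e = q - \Pi_n\varepsilon_\lambda$ genuinely clean: one must be careful that $q$ and $\Pi_n\varepsilon_\lambda$ agree at $\theta_0$ (so that $e'$ lies in the span of the \emph{reduced} Lagrange polynomials and the \emph{reduced} Lebesgue constant $\tilde\Lambda_n$ — not the full one — enters), and one must control the derivative of the interpolation error $(\varepsilon_\lambda - \Pi_n\varepsilon_\lambda)'$ at interior-type nodes rather than just the error itself; this needs either a direct divided-difference representation of $(\varepsilon_\lambda - \Pi_n\varepsilon_\lambda)'$ or a Markov-type inequality, and the bookkeeping of the various $\lambda$-dependent constants must be uniform over the compact set $U$. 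Everything else (Stirling, the factorial-versus-$(C/n)^n$ comparison, and invoking Assumption \ref{assumption} to kill the $o(n)$ factor $\tilde\Lambda_n$) is routine.
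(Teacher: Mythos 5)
Your identification of $\ell_0 + P(D-\lambda I)^{-1}D\mathbf{1}$ as the collocation polynomial $q$ of degree $n$ with $q(0)=1$ and $q'(\theta_k)=\lambda q(\theta_k)$ matches the first part of the paper's argument, and your consistency estimate (interpolation error of the entire function $\varepsilon_\lambda$, factorial decay, Stirling) is the right source of the $\frac{1}{\sqrt n}(C/n)^n$ rate. But there is a genuine gap at the stability step. Writing $e = q - \Pi_n\varepsilon_\lambda$, the nodal values of $e'$ are \emph{not} the residuals: one has $e'(\theta_j) = \lambda q(\theta_j) - (\Pi_n\varepsilon_\lambda)'(\theta_j) = \lambda e(\theta_j) + \bigl(\lambda\varepsilon_\lambda(\theta_j) - (\Pi_n\varepsilon_\lambda)'(\theta_j)\bigr)$, so when you expand $e'$ in the reduced Lagrange basis you have dropped the feedback term $\lambda e(\theta_j)$ (equivalently, you silently replaced $q(\theta_j)$ by $\varepsilon_\lambda(\theta_j)$, which is part of what is to be proved). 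Restoring it gives only $\norm{e} \leq \tilde\Lambda_n\bigl(|\lambda|\,\norm{e} + \max_j|\rho_j|\bigr)$ with $\rho_j$ the defect, and since $\tilde\Lambda_n|\lambda|$ is not small this does not close. The error equation is a fixed-point equation, not an explicit formula, and one must prove the discrete scheme is stable, i.e.\ that the operators $(I-\lambda L_n K)^{-1}$ (with $L_n$ interpolation on the reduced mesh and $K$ integration from $0$) are uniformly bounded in $n$ and $\lambda \in U$. This is exactly where Assumption \ref{assumption} does its real work in the paper: since $K\phi$ is Lipschitz, $\tilde\Lambda_n = o(n)$ yields $L_nK \to K$ in operator norm, and the Volterra property of $K$ then gives invertibility of $I-\lambda K$ and hence uniform invertibility of $I - \lambda L_n K$. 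In your write-up Assumption \ref{assumption} is used only to absorb a factor $\tilde\Lambda_n$ against $1/n!$, which is not its essential role, and without the stability step the sup-norm bound on $e$ is unjustified.

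Two smaller points. First, your invertibility argument for $D-\lambda I$ ("the estimate shows the kernel element is too small to vanish at $0$ unless it is zero") is circular in the same way: the estimate you would invoke is the one whose proof is at issue. In the paper invertibility falls out cleanly from uniqueness of the solution of the fixed-point equation with $\zeta_0=0$ (trivial kernel), i.e.\ from the same stability fact. Second, your consistency term requires bounding the \emph{derivative} of the interpolation error of $\varepsilon_\lambda$ at the nodes; this is repairable (divided-difference representation, or note that the paper avoids it altogether by comparing $d'$ with $y'=\lambda\varepsilon_\lambda$ and interpolating the antiderivative $Ky'$, so only a plain interpolation error for an analytic function is needed), but as written it is another loose end. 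If you reorganise your proof around the error equation $e_n = \lambda L_nKe_n + \lambda(L_n-I)Ky'$ and prove uniform boundedness of $(I-\lambda L_nK)^{-1}$, your argument becomes essentially the paper's.
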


\begin{proof}
{\color{black}
Fix $\lambda \in U$ and $\zeta_0 \in \mathbb{C}$. We want to solve
\begin{equation} \label{eq: inverteerbaarheid D-lambda}
(D-\lambda I) \zeta = \zeta_0 D \textbf{1}
\end{equation}
for $\zeta \in \mathbb{C}^n$. If $\zeta$ satisfies \eqref{eq: inverteerbaarheid D-lambda}, then $d := \ell_0 \zeta_0 + P \zeta$ is a polynomial of degree $n$ that satisfies
\begin{align*}
d'(\theta_k) &= \ell_0'(\theta_k) \zeta_0 + \sum_{j = 1}^n \ell_j'(\theta_k) \zeta_j \\
& = \left(-\zeta_0 D \textbf{1} \right)_k + \left(D \zeta\right)_k \\
& = \lambda \zeta_k \\
&= \lambda d(\theta_k)
\end{align*}
for $k = 1, \ldots, n$. So $d$ has to satisfy
\begin{align} \label{eq: d}
\begin{cases}
d'(\theta) &= \lambda d(\theta), \quad \theta = \theta_1,\ldots, \theta_n\\d(0) & = \zeta_0.
\end{cases}
\end{align}
Vice versa, if $d$ is a polynomial of degree $n$, then 
\[ d(\theta) = \sum_{j = 0}^n \ell_j(\theta) \zeta_j \]
with $\zeta_j = d(\theta_j)$, $j = 0, \ldots, n$. So if $d$ additionally satisfies \eqref{eq: d}, then 
\[ \ell_0'(\theta_k) \zeta_0 + \sum_{j = 1}^n \ell_j'(\theta_k) \zeta_j = \lambda \zeta_k \]
for $k = 1, \ldots, n$, and $\zeta = (d(\theta_1), \ldots, d(\theta_n))$ is a solution of \eqref{eq: inverteerbaarheid D-lambda}. So finding a solution $\zeta \in \mathbb{C}^n$ of \eqref{eq: inverteerbaarheid D-lambda} is equivalent to finding a polynomial of degree $n$ that satisfies \eqref{eq: d}. 

\medskip

Define the operators
\begin{align*}
L_n: X \to X, &\qquad L_n \phi = \sum_{j = 1}^n \tilde{\ell}_j(.) \phi(\theta_j), \\
K: X \to X, &\qquad (K \phi)(\theta) = \int_0^\theta \phi(s) ds 
\end{align*}
with $\tilde{\ell}_j$ as in \eqref{eq: reduced lagrange polynomials} for $j = 1, \ldots, n$. If $d$ is a polynomial of degree $n$ satisfying \eqref{eq: d}, then 
\begin{equation} \label{eq: d 2}
d' = \lambda L_n d.
\end{equation}
Since $d(\theta) = (K d)'(\theta) +\zeta_0$ for $\theta \in [-1, 0]$, \eqref{eq: d 2} gives
\begin{equation*}
d' = \lambda L_n K d' + \lambda \zeta_0, 
\end{equation*}
where $\zeta_0$ denotes the function taking the constant value $\zeta_0$ and where we have used that $L_n \zeta_0 = \zeta_0$. So if $d$ is a polynomial of degree $n$ satisfying \eqref{eq: d}, then $d$ solves
\begin{subequations}
\begin{align}
d'(\theta) &= \lambda (L_n K d')(\theta) + \lambda \zeta_0, \quad \theta \in [-1, 0], \label{eq: d'} \\
d(0) &= \zeta_0 \label{eq: d' 2}. 
\end{align}
\end{subequations}
Vice versa, if $d$ is a solution of \eqref{eq: d'}--\eqref{eq: d' 2}, then $d'$ is a polynomial of degree $n-1$ and therefore $d$ is a polynomial of degree $n$. Moreover, for $k = 1, \ldots, n$ we find that
\begin{align*}
d'(\theta_k) &= \lambda (Kd')(\theta_k) + \lambda \zeta_0 \\
 & = \lambda (d(\theta_k) - \zeta_0) + \lambda \zeta_0 \\
 & = \lambda d(\theta_k)
\end{align*}
so $d$ satisfies \eqref{eq: d}. We conclude that $d$ is a polynomial of degree $n$ satisfying \eqref{eq: d} if and only if $d$ solves \eqref{eq: d'}--\eqref{eq: d' 2}. 

\medskip

Define $y: = \varepsilon_\lambda \zeta_0$, then $y$ satisfies
\begin{align} \label{eq: y}
\begin{cases}
y'(\theta) = \lambda y(\theta), \quad \theta \in [-1, 0]\\
y(0) = \zeta_0.
\end{cases}
\end{align}
Since $y(\theta) = (Ky')(\theta) +\zeta_0$, $\theta \in [-1, 0]$, \eqref{eq: y} gives
\begin{equation} \label{eq: y'}
y' = \lambda  K y' + \lambda \zeta_0
\end{equation}
where $\zeta_0$ denotes the function taking the constant value $\zeta_0$. 
Now suppose that $d$ satisfies \eqref{eq: d'}--\eqref{eq: d' 2}. Then $e_n := d' - y'$ satisfies 
\begin{equation} \label{eq: e_n}
e_n = \lambda L_n K e_n + \lambda \left(L_n - I\right) K y'.
\end{equation}
Vice versa, if $e_n$ satisfies \eqref{eq: e_n}, then $d' : = e_n + y'$ satisfies \eqref{eq: d'} and hence $d(\theta) := (K d')(\theta) + \zeta_0, \ \theta \in [-1, 0]$ satisfies \eqref{eq: d'}--\eqref{eq: d' 2}. 

\medskip

{\color{black}For $\phi \in X$, $K\phi$ is a Lipschitz function. Since by Assumption \ref{assumption} the Lebesgue constant $\tilde{\Lambda}_n$ associated to the nodes $\{\theta_1,\dots,\theta_n\}$ satisfies $\lim_{n\to\infty} \frac{\tilde{\Lambda}_n}{n} = 0$, it follows from standard interpolation theory that $\lim_{n \to \infty} L_n K = K$ in operator norm, see for example \cite[Sections 4.1--4.2]{interpolation}.}

Since $K$ is Volterra, $(I - \lambda K)$ is invertible for $\lambda \in \mathbb{C}$. Therefore $(I - \lambda L_n K)$ is invertible for $n$ large enough and $\lim_{n \to \infty} ( I - \lambda L_n K)^{-1} = (I - \lambda K)^{-1}$. From here it follows for $n$ large enough, \eqref{eq: e_n} has a unique solution $e_n$: 
\begin{equation} \label{eq: e_n solved}
e_n = ( I - \lambda L_n K)^{-1} \lambda  \left(L_n- I\right) K y'.
\end{equation}
Thus, there is a unique function $d' = e_n' + y'$ satisfying \eqref{eq: d'} and therefore a unique function $d(\theta) := (K d')(\theta)+ \zeta_0, \ \theta \in [-1, 0]$ satisfying \eqref{eq: d'}--\eqref{eq: d' 2}. So there is a unique $\zeta \in \mathbb{C}^n$ satisfying \eqref{eq: inverteerbaarheid D-lambda}. 

For $\zeta_0 = 0$, this implies that the kernel of $D-\lambda I$ is trivial and hence the map $D-\lambda I: \mathbb{C}^n \to \mathbb{C}^n$ is invertible. So we can now also truthfully write $\zeta = \zeta_0(D-\lambda I)^{-1} D \textbf{1}$. 

{\color{black}Standard error estimates for polynomial interpolation (note that $K y'$ is analytic)} give that
$$ \norm{ \left(L_n - I\right) K y'} \leq C_1 \frac{\left| \lambda \right|^n}{n!} \left| \zeta_0 \right|,$$
for some $C_1 > 0$; see for example \cite[Theorem 1.5]{interpolation}. Moreover, since $\lim_{n \to \infty} ( I - \lambda L_n K)^{-1} = (I - \lambda K)^{-1}$, the sequence $(\|(I-\lambda L_n K)^{-1}\|)_{n \in \mathbb{N}}$ is bounded. So \eqref{eq: e_n solved} gives that
\[ \norm{e_n} \leq C_2\frac{\left| \lambda \right|^n}{n!} \left| \zeta_0 \right| \]
for some $C_2 > 0$. Together with Stirling's formula this then yields the error estimate \eqref{eq: error estimate} for $\zeta_0 = 1$.  
}
\end{proof}

\begin{corollary} \label{cor: ce}
Let $\Delta_0(\lambda, \alpha)$ and $\Delta_n(\lambda, \alpha)$ be given by, respectively, \eqref{eq: ce dde parameter} and \eqref{eq: ce ode}. 
{\color{black}Let $U \subseteq \mathbb{C} \times \mathbb{R}$ be a compact subset. Then there exists a $C > 0$ such that
\begin{align*}
\left| \Delta_0(\lambda, \alpha) - \Delta_n(\lambda, \alpha) \right| < \frac{1}{\sqrt{n}} \left(\frac{C}{n}\right)^n
\end{align*}
for $n \in \mathbb{N}$ large enough and $(\lambda, \alpha) \in U$. }
\end{corollary}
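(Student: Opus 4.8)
The plan is to reduce the estimate directly to Lemma~\ref{lem: convergence collocation}. Comparing the definitions \eqref{eq: ce dde parameter} and \eqref{eq: ce ode}, the term $\lambda$ cancels, so for every $(\lambda,\alpha)$ for which $D-\lambda I$ is invertible we have the identity
\[
\Delta_0(\lambda,\alpha) - \Delta_n(\lambda,\alpha) = L(\alpha)\bigl(\ell_0 + P(D-\lambda I)^{-1} D\mathbf{1} - \varepsilon_\lambda\bigr),
\]
which displays the difference of characteristic functions as the bounded linear functional $L(\alpha)$ applied to exactly the quantity that Lemma~\ref{lem: convergence collocation} controls.

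First I would pass to the coordinate projections of $U$: since $U \subseteq \mathbb{C}\times\mathbb{R}$ is compact, its images $U_{\mathbb{C}} \subseteq \mathbb{C}$ and $U_{\mathbb{R}} \subseteq \mathbb{R}$ under the two projections are compact, and $U \subseteq U_{\mathbb{C}}\times U_{\mathbb{R}}$. Applying Lemma~\ref{lem: convergence collocation} to the compact set $U_{\mathbb{C}}$ produces $N = N(U_{\mathbb{C}})$ and $C_0>0$ such that $D-\lambda I$ is invertible and $\norm{\ell_0 + P(D-\lambda I)^{-1}D\mathbf{1} - \varepsilon_\lambda} \le \tfrac{1}{\sqrt n}(C_0/n)^n$ for all $n\ge N$ and $\lambda\in U_{\mathbb{C}}$. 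On the other hand, $\alpha\mapsto L(\alpha)$ is $C^k$ by Hypothesis~\ref{hyp: condities functies dde}, hence continuous, so that $M := \sup_{\alpha\in U_{\mathbb{R}}} \norm{L(\alpha)}$ is finite (here $\norm{L(\alpha)}$ denotes the operator norm of $L(\alpha):X\to\mathbb{R}$). Combining these two bounds with the displayed identity gives
\[
\bigl|\Delta_0(\lambda,\alpha) - \Delta_n(\lambda,\alpha)\bigr| \le M\,\frac{1}{\sqrt n}\left(\frac{C_0}{n}\right)^n
\]
for all $n\ge N$ and $(\lambda,\alpha)\in U$. To finish, I would absorb the constant $M$: since $M^{1/n}\to 1$ as $n\to\infty$, there is $N'\ge N$ with $M^{1/n}C_0 \le 2C_0$ for $n\ge N'$, so setting $C := 2C_0$ we get $M(C_0/n)^n = (M^{1/n}C_0/n)^n \le (C/n)^n$, and hence the claimed bound, for all $n\ge N'$.

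There is essentially no serious obstacle here; the statement is a short corollary of Lemma~\ref{lem: convergence collocation}. The only points deserving care are that the projections of a compact subset of $\mathbb{C}\times\mathbb{R}$ are themselves compact — so that both the conclusion of Lemma~\ref{lem: convergence collocation} and the continuity of $\alpha\mapsto L(\alpha)$ may be invoked with constants uniform over $U$ — and that the multiplicative factor $\norm{L(\alpha)}$ can be swallowed by the super-exponentially small tail at the expense of slightly enlarging $C$ and the threshold on $n$.
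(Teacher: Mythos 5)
Your proposal is correct and follows exactly the route the paper intends: the corollary is left as an immediate consequence of Lemma \ref{lem: convergence collocation}, obtained by writing $\Delta_0(\lambda,\alpha)-\Delta_n(\lambda,\alpha)=L(\alpha)\bigl(\ell_0+P(D-\lambda I)^{-1}D\mathbf{1}-\varepsilon_\lambda\bigr)$, bounding $\norm{L(\alpha)}$ uniformly on the compact parameter set, and absorbing that constant into $C$. No gaps; the observations about compact projections and swallowing the factor $M$ into the super-exponential bound are exactly the needed (routine) care.
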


\medskip
Next we will exploit the fact that both $\Delta_0$ and $\Delta_n$ are analytic functions in $\lambda$ to prove convergence of the derivatives as well, as $n$ tends to infinity. First an auxiliary lemma.

\begin{lemma} \label{lemma: derivatives}
Let $h_0 : \mathbb{C} \to \mathbb{C}$ and $h_n :  \mathbb{C} \to \mathbb{C}$, $n \in \mathbb{N}$, be analytic functions. Assume 
\[h_0(z) =\lim_{n \to \infty} h_n(z)\qquad\mbox{uniformly for } z \mbox{ in compact subsets of } \mathbb{C}.\]
{\color{black}Fix a compact subset $U \subseteq \mathbb{C}$ and let $V \subseteq \mathbb{C}$ be a compact set such that $U$ is contained in the interior of $V$. Let $(\rho_n)_{n \in \mathbb{N}} = \left(\rho_n(V)\right)_{n \in \mathbb{N}}$ be a sequence such that
\[ \left| h_n(z) - h_0(z) \right| \leq \rho_n \qquad \mbox{for all } n \in \mathbb{N} \mbox{ and } z \in V.\]
Moreover, fix $k \in \{0, 1, 2, \ldots \}$ and denote the $k$-th derivative of $h$ by $h^{(k)}$. Then there exists a constant $C_k > 0$ such that 
\begin{align*}
\left| h_n^{(k)} (z) - h^{(k)}_0(z) \right| \leq C_k \rho_n
\end{align*}
for $n \in \mathbb{N}$ and $z \in U$. }
\end{lemma}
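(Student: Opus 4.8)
The plan is to use the Cauchy integral formula for derivatives. Since each $h_n$ is analytic on a neighbourhood of $V$, and $U$ lies in the interior of $V$, I can find a fixed radius $r > 0$ such that for every $z \in U$ the closed disk $\overline{B(z,r)}$ is contained in $V$; concretely, take $r = \tfrac{1}{2}\,\mathrm{dist}(U, \mathbb{C}\setminus V) > 0$, which is positive because $U$ is compact and contained in the open interior of $V$. Then for each $z \in U$,
\begin{align*}
h_n^{(k)}(z) - h_0^{(k)}(z) = \frac{k!}{2\pi i} \oint_{|w - z| = r} \frac{h_n(w) - h_0(w)}{(w-z)^{k+1}}\, dw.
\end{align*}

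Next I would estimate this integral directly. On the circle $|w - z| = r$ we have $w \in V$, so the numerator is bounded by $\rho_n$ in absolute value, while $|w - z|^{k+1} = r^{k+1}$. The arc length of the contour is $2\pi r$. Hence
\begin{align*}
\left| h_n^{(k)}(z) - h_0^{(k)}(z) \right| \leq \frac{k!}{2\pi} \cdot \frac{\rho_n}{r^{k+1}} \cdot 2\pi r = \frac{k!}{r^k}\, \rho_n,
\end{align*}
so the claim holds with $C_k := k!/r^k$, which depends only on $k$ and on the geometry of $U$ and $V$ (through $r$), not on $n$ or $z$. The uniform convergence hypothesis $h_0 = \lim_n h_n$ on compact sets is what guarantees $h_0$ is analytic (as a locally uniform limit of analytic functions), so that $h_0^{(k)}$ makes sense and the Cauchy formula applies to it as well; strictly speaking one only needs that $h_0$ is analytic and that the bound $|h_n - h_0| \le \rho_n$ holds on $V$, both of which are given.

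There is no real obstacle here; the only point requiring a small argument is the existence of a uniform radius $r$ that works simultaneously for all centres $z \in U$, which follows from compactness of $U$ and the fact that $U \subseteq \mathrm{int}(V)$. One should also note the harmless abuse of notation in the statement: "$h^{(k)}$" in the displayed inequality should read $h_0^{(k)}$, and the constant $C_k$ is independent of the particular sequence $(\rho_n)$ — it depends only on $k$, $U$ and $V$ — so the lemma can subsequently be applied with $\rho_n = \tfrac{1}{\sqrt n}\bigl(\tfrac{C}{n}\bigr)^n$ coming from Corollary~\ref{cor: ce} to transfer the convergence rate to $D_1\Delta_n \to D_1\Delta_0$ and higher $\lambda$-derivatives.
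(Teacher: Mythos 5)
Your proof is correct and essentially the same as the paper's: both apply the Cauchy integral formula for the $k$-th derivative to $h_n - h_0$ and use compactness of $U \subseteq \mathrm{int}(V)$ to obtain a uniform positive distance to the integration contour, giving the bound $C_k \rho_n$ with $C_k$ independent of $n$ and $z$. The only cosmetic difference is that you integrate over circles $\left| w - z \right| = r$ centred at each $z \in U$ rather than over $\partial V$, which incidentally avoids the paper's implicit assumption that $\partial V$ is a rectifiable contour of finite length.
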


\begin{proof}
By the Cauchy Integral Formula, we have that
\begin{align*}
h_n(z) = \frac{1}{2\pi i} \int_{\partial V} \frac{h_n(s)}{(s - z)}ds, \qquad h_0(z) = \frac{1}{2 \pi i} \int_{\partial V} \frac{h_0(s)}{(s-z)} ds
\end{align*}
for all $z \in U$. This yields that 
\begin{align*}
h_n^{(k)}(z) = \frac{1}{2 \pi i} k! \int_{\partial V} \frac{h_n(s)}{(s-z)^{k+1}} ds, \qquad h_0^{(k)}(z) = \frac{1}{2 \pi i} k! \int_{\partial V} \frac{h_0(s)}{(s-z)^{k+1}} ds
\end{align*}
for $k \in \{0, 1, 2, \ldots \}$ and $z \in U$.
Since $U, V$ are compact sets and $U$ is contained in the interior of $V$, we find that there exists a $\delta > 0$ such that $\left| z - s \right| > \delta$ for all $z \in U, s \in \partial V$. Thus, we see that
{\color{black}
\begin{align*}
\left| h_n^{(i)} (z) - h_0^{(i)}(z) \right|  &= \frac{1}{2 \pi} k! \left| \int_{\partial V} \frac{h_n(s) - h_0(s)}{(s-z)^{k+1}} ds \right| \\
& \leq \frac{1}{2 \pi} k! \frac{1}{\delta^{k+1}} \tilde{C} \rho_n
\end{align*}}
for some $\tilde{C} > 0$, which proves the claim.  
\end{proof}

\begin{corollary} \label{cor: convergence derivatives ce}
{\color{black} Let $\Delta_0(\lambda, \alpha)$ and $\Delta_n(\lambda, \alpha)$ be given by, respectively, \eqref{eq: ce dde parameter} and \eqref{eq: ce ode}. Let $U \subseteq \mathbb{C} \times \mathbb{R}$ be a compact subset. Then there exists a $C > 0$ such that
\begin{align*}
\left| D_1 \Delta_0(\lambda, \alpha) - D_1 \Delta_n(\lambda, \alpha) \right| < \frac{1}{\sqrt{n}} \left(\frac{C}{n}\right)^n
\end{align*}
for $n \in \mathbb{N}$ large enough and $(\lambda, \alpha) \in U$.  }
\end{corollary}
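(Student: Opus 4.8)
The plan is to deduce the corollary from Corollary~\ref{cor: ce}, which already controls $\left|\Delta_0(\lambda,\alpha) - \Delta_n(\lambda,\alpha)\right|$, by applying Lemma~\ref{lemma: derivatives} with $k = 1$ to the maps $\lambda \mapsto \Delta_0(\lambda,\alpha)$ and $\lambda \mapsto \Delta_n(\lambda,\alpha)$ for each fixed value of the suppressed parameter $\alpha$. First I would reduce to a product of compact sets: since $U \subseteq \mathbb{C} \times \mathbb{R}$ is compact, there are compact sets $U_1 \subseteq \mathbb{C}$ and $U_2 \subseteq \mathbb{R}$ with $U \subseteq U_1 \times U_2$. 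Then I would fix nested compact sets $U_1 \subseteq \operatorname{int}(V) \subseteq V \subseteq \operatorname{int}(W) \subseteq W \subseteq \mathbb{C}$ and apply Lemma~\ref{lem: convergence collocation} to $W$: there is $N = N(W) \in \mathbb{N}$ such that for $n \ge N$ the matrix $D - \lambda I$ is invertible for every $\lambda \in W$. Since $\{\lambda \in \mathbb{C} : D - \lambda I \text{ invertible}\}$ is open and contains $\overline{V}$, for $n \ge N$ the map $\lambda \mapsto \Delta_n(\lambda,\alpha)$ defined in \eqref{eq: ce ode} is analytic on the open neighbourhood $\operatorname{int}(W)$ of $\overline{V}$, for every $\alpha$; and $\lambda \mapsto \Delta_0(\lambda,\alpha) = \lambda - L(\alpha)\varepsilon_\lambda$ is entire, because $\lambda \mapsto \varepsilon_\lambda$ is an entire $X$-valued map and $L(\alpha)$ is bounded linear.

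Next I would apply Corollary~\ref{cor: ce} to the compact set $V \times U_2$, obtaining a constant $C' > 0$ such that, for $n$ sufficiently large and all $(\lambda,\alpha) \in V \times U_2$,
\[
\left|\Delta_0(\lambda,\alpha) - \Delta_n(\lambda,\alpha)\right| \le \frac{1}{\sqrt{n}}\left(\frac{C'}{n}\right)^n =: \rho_n ,
\]
so in particular $\Delta_n(\cdot,\alpha) \to \Delta_0(\cdot,\alpha)$ uniformly on $V$. For each fixed $\alpha \in U_2$ I would then invoke Lemma~\ref{lemma: derivatives} — whose proof uses only analyticity on a neighbourhood of the chosen compact set together with the uniform bound $\rho_n$ on that set — with $h_n = \Delta_n(\cdot,\alpha)$, $h_0 = \Delta_0(\cdot,\alpha)$, the nested compact sets $U_1 \subseteq \operatorname{int}(V)$, the sequence $(\rho_n)$ and $k = 1$, to conclude
\[
\left|D_1\Delta_0(\lambda,\alpha) - D_1\Delta_n(\lambda,\alpha)\right| \le C_1 \rho_n \qquad \text{for all } \lambda \in U_1 .
\]
The essential observation is that this is uniform in $\alpha$: the constant $C_1$ furnished by Lemma~\ref{lemma: derivatives} depends only on $k = 1$, on a lower bound $\delta > 0$ for the distance between $U_1$ and $\partial V$, and on the length of the contour $\partial V$ — none of which involve $\alpha$. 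Hence the bound holds for all $(\lambda,\alpha) \in U_1 \times U_2 \supseteq U$.

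Finally I would absorb the constant $C_1$ into the exponential factor: since $C_1^{1/n} \to 1$, for $n$ large enough $C_1\left(C'/n\right)^n \le \left(2C'/n\right)^n$, and the claimed estimate follows with $C := 2C'$. I do not anticipate a genuine difficulty in this argument. The two points that need a little care are: (i) Lemma~\ref{lemma: derivatives} is stated for entire functions, whereas $\Delta_n(\cdot,\alpha)$ has poles at the eigenvalues of $D$ — this is why I introduce the nested compacts $\overline{V} \subseteq \operatorname{int}(W)$ and invoke Lemma~\ref{lem: convergence collocation} to guarantee that, for $n$ large, these poles stay off a neighbourhood of $V$, so that the Cauchy integral argument of Lemma~\ref{lemma: derivatives} applies unchanged; and (ii) the independence of the Cauchy-estimate constant from the suppressed parameter $\alpha$, which is immediate from the proof of Lemma~\ref{lemma: derivatives}.
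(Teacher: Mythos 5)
Your proposal is correct and follows exactly the route the paper intends: combine the uniform estimate of Corollary \ref{cor: ce} with the Cauchy-integral argument of Lemma \ref{lemma: derivatives} (with $k=1$), then absorb the resulting constant into the factor $(C/n)^n$. The two extra points you flag --- using Lemma \ref{lem: convergence collocation} on a larger compact set so that $\Delta_n(\cdot,\alpha)$ is analytic near the contour despite its poles at eigenvalues of $D$, and the independence of the Cauchy-estimate constant from $\alpha$ --- are sensible refinements of the same argument, not a different approach.
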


\begin{figure}
\centering
\includegraphics[scale=1]{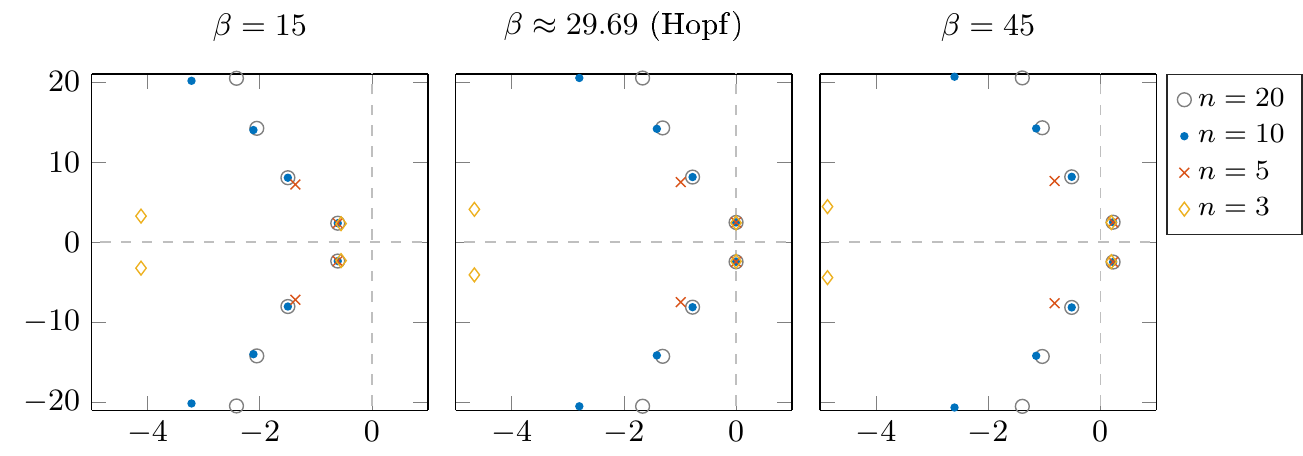}
\caption{{\color{black}Pseudospectral approximation to \eqref{eq: blowfly unscaled}} with $\tau = 1$ and $h(x) = e^{-x}$: {\color{black}roots of the characteristic equation} at the positive equilibrium for $\mu=3$ and different values of $\beta$ as indicated at the top (corresponding to the three black crosses in Figure \ref{fig:hopf-analytic}). The eigenvalues are approximated with MatCont.
}
\label{fig:eigenvalues}
\end{figure}

\section{Hopf bifurcation in the pseudospectral limit}
\label{sec:conv_hopf}

\begin{figure}
\centering
\includegraphics[scale=1]{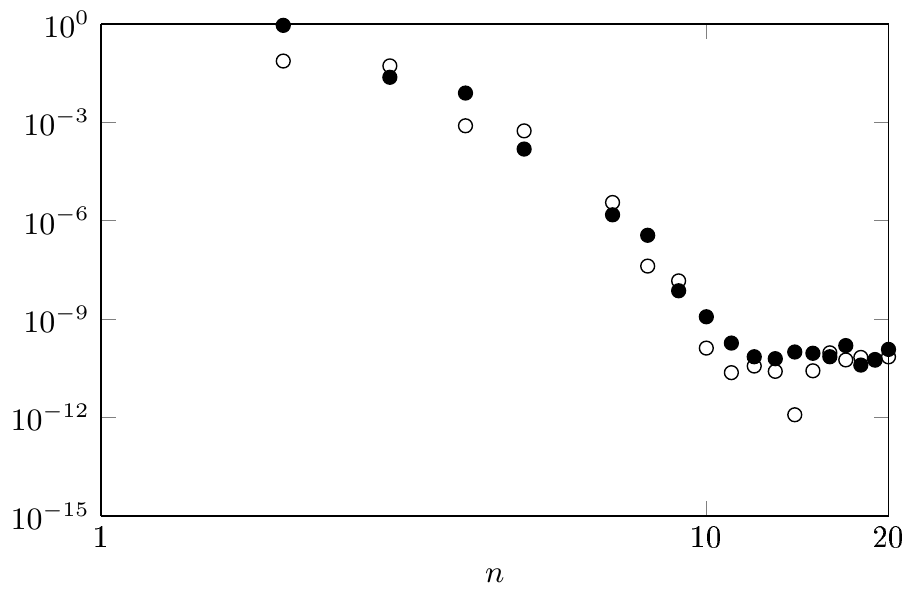}
\caption{Equation \eqref{eq: blowfly unscaled} with $\tau = 1$ and $h(x) = e^{-x}$: log-log plot of the error in the detection of Hopf point (bullets) and in the approximation of the imaginary part of the rightmost {\color{black}roots of the characteristic equation} at Hopf (circles), at $\mu=3$. The errors are calculated by requiring a tolerance of $10^{-9}$ in MatCont computations, and by calculating the absolute value of the difference between the MatCont output and the analytic values. Note the exponential decay until the accuracy $10^{-10}$ is reached.
}
\label{fig:errors}
\end{figure}

In the following, we denote a generic Hopf bifurcation by the triple $(\alpha, \omega, a_2)$, where $\alpha$ is the bifurcation point, $i \omega$ the {\color{black}root of the characteristic equation} on the imaginary axis and $a_2$ the direction coefficient. Here we use the word generic to indicate the three standard conditions ($1.$ simple {\color{black}root of the characteristic equation}; $2.$ transversal crossing; $3.$ non-resonance) and we do \emph{not} require that the direction coefficient is non-zero. To show that the Hopf bifurcation in the pseudospectral approximation is a faithful representation of the Hopf bifurcation in the DDE, we have to answer the following questions:

\begin{question} \label{question 1}
If the DDE has a generic Hopf bifurcation $(\alpha_0, \omega_0, a_{20})$, do the pseudospectral ODE have Hopf bifurcations $(\alpha_n, \omega_n, a_{2n})$ with $\lim_{n \to \infty} (\alpha_n, \omega_n, a_{2n}) = (\alpha_0, \omega_0, a_{20})$?
\end{question}

\begin{question} \label{question 2}
Vice versa, if the pseudospectral ODE have generic Hopf bifurcations $(\alpha_n, \omega_n, a_{2n})$ with 
\[\lim_{n \to \infty} (\alpha_n, \omega_n, a_{2n}) = (\alpha_0, \omega_0, a_{20}),\]
does the DDE have a Hopf bifurcation $ (\alpha_0, \omega_0, a_{20})$?
\end{question}

Answering these questions involves checking the following conditions:

\begin{enumerate}
\item  At the bifurcation point, there is a simple {\color{black}root of the characteristic equation} on the imaginary axis.
\item This {\color{black}root of the characteristic equation} on the imaginary axis crosses the axis transversely if we vary the parameter. 
\item At the bifurcation point, there are no {\color{black}roots of the characteristic equation} in resonance with the root on the imaginary axis. 
\item Convergence of the direction coefficients.
\end{enumerate}
\medskip
We first answer Question \ref{question 1}. To check conditions $1$ and $2$, we use the following lemma, which can be viewed as a version of the Implicit Function Theorem with a (discrete) parameter living in $\mathbb{N}$. It is inspired by \cite[Theorem A.1]{ift}  where the parameter belongs to a general metric space. 

\begin{lemma} \label{lem: discrete IFT}
Let $h_0: \mathbb{R}^d \to \mathbb{R}^d$ and $h_n: \mathbb{R}^d \to \mathbb{R}^d, \ n \in \mathbb{N},$ be $C^1$ functions with 
\begin{equation} \label{eq: ift aanname}
h_0(x) = \lim_{n \to \infty} h_n(x) \quad \mbox{and} \quad Dh_0(x) = \lim_{n \to \infty} Dh_n(x) 
\end{equation}
uniformly for $x$ in compact subsets of $\mathbb{R}^d$. 
Given a compact subset $U \subset \mathbb{R}^d$, let $(\rho_n)_{n \in \mathbb{N}} = (\rho_n(U))_{n \in \mathbb{N}}$ be a sequence such that
\begin{align} \label{eq: convergence rate}
 \norm{h_n(x) - h_0(x)} \leq \rho_n \quad \mbox{for all } n \in \mathbb{N} \mbox{ and } x \in U.
\end{align}
Assume that there exists $x_0 \in \mathbb{R}^d$ such that $h_0(x_0) = 0$ and $Dh_0(x_0)$ is invertible. Then there exists a sequence $(x_n)_{n \in \mathbb{N}}$ such that for $n$ large enough, $h_n(x_n) =0$ and $Dh_n(x_n)$ is invertible. Moreover,  {\color{black} there exists a constant $C > 0$ such that
\begin{align*}
\norm{x_n - x_0} \leq C \rho_n, \quad n \in \mathbb{N}\quad\mbox{ with } \rho_n = \rho_n(U).
\end{align*}}
\end{lemma}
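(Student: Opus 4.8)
The plan is to run a quantitative version of the standard Newton/contraction-mapping proof of the Implicit Function Theorem, with $n$ playing the role of a parameter and the convergence rate $\rho_n$ controlling how close the solution $x_n$ stays to $x_0$. First I would set $B := Dh_0(x_0)$, which is invertible by hypothesis, and define the Newton-type map $T_n(x) := x - B^{-1} h_n(x)$. A fixed point of $T_n$ is exactly a zero of $h_n$. The key estimates are: $T_n$ is a contraction on a small closed ball $\overline{B}(x_0,r)$ once $n$ is large, and $T_n$ maps that ball into itself. For the contraction estimate, note $DT_n(x) = I - B^{-1} Dh_n(x) = B^{-1}\bigl(Dh_0(x_0) - Dh_n(x)\bigr)$; using continuity of $Dh_0$ at $x_0$ (shrinking $r$) and the uniform convergence $Dh_n \to Dh_0$ on the compact ball (taking $n$ large), one makes $\norm{DT_n(x)} \le \tfrac12$ on $\overline{B}(x_0,r)$, so $T_n$ is a $\tfrac12$-contraction there. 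For the self-map property, estimate
\[
\norm{T_n(x_0) - x_0} = \norm{B^{-1} h_n(x_0)} = \norm{B^{-1}(h_n(x_0) - h_0(x_0))} \le \norm{B^{-1}}\rho_n,
\]
using $h_0(x_0)=0$ and \eqref{eq: convergence rate} with $U = \overline{B}(x_0,r)$; then for any $x \in \overline{B}(x_0,r)$,
\[
\norm{T_n(x) - x_0} \le \norm{T_n(x) - T_n(x_0)} + \norm{T_n(x_0) - x_0} \le \tfrac12 \norm{x - x_0} + \norm{B^{-1}} \rho_n \le \tfrac{r}{2} + \norm{B^{-1}} \rho_n,
\]
which is $\le r$ once $n$ is large enough that $\norm{B^{-1}}\rho_n \le r/2$ (possible since $\rho_n \to 0$, which follows from \eqref{eq: convergence rate} together with the uniform convergence $h_n \to h_0$). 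Banach's fixed point theorem then gives a unique fixed point $x_n \in \overline{B}(x_0,r)$, i.e. $h_n(x_n) = 0$.

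Next I would extract the rate. Since $x_n = T_n(x_n)$ and $x_0$ is not a fixed point of $T_n$ in general, iterate or use the standard a posteriori bound for contractions:
\[
\norm{x_n - x_0} \le \frac{1}{1 - \tfrac12}\,\norm{T_n(x_0) - x_0} \le 2 \norm{B^{-1}} \rho_n,
\]
so $C := 2\norm{B^{-1}}$ works (for $n$ beyond the threshold; one can enlarge $C$ to absorb the finitely many small $n$, or simply state the bound for $n$ large). Finally, invertibility of $Dh_n(x_n)$: from $\norm{DT_n(x_n)} = \norm{I - B^{-1} Dh_n(x_n)} \le \tfrac12 < 1$, a Neumann-series argument shows $B^{-1} Dh_n(x_n)$ is invertible, hence $Dh_n(x_n)$ is invertible. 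This completes all the assertions.

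I do not expect a genuine obstacle here — the argument is a routine quantitative IFT. The one point requiring a little care is the bookkeeping of thresholds: the radius $r$ must be chosen first (depending only on $h_0$ via continuity of $Dh_0$ and $\norm{B^{-1}}$), and only then is $N$ chosen (depending on $r$, hence on $h_0$, and on the convergence $\rho_n \to 0$) so that for $n \ge N$ both $\norm{DT_n} \le \tfrac12$ on $\overline{B}(x_0,r)$ and $\norm{B^{-1}}\rho_n \le r/2$ hold. A second minor subtlety is justifying $\rho_n \to 0$: this is not literally among the hypotheses, but \eqref{eq: convergence rate} forces $\rho_n \ge \sup_{x \in U}\norm{h_n(x) - h_0(x)}$ to be an upper bound for a quantity tending to $0$; strictly one should assume (or note it is harmless to assume) that $\rho_n$ itself is chosen to tend to $0$, which is exactly the situation in the applications (Corollary \ref{cor: ce} and Corollary \ref{cor: convergence derivatives ce} give $\rho_n = \tfrac{1}{\sqrt n}(C/n)^n$). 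With these thresholds fixed, everything else is the standard contraction estimate carried out above.
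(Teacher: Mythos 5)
Your proof is correct and takes essentially the same route as the paper: the same Newton-type map $x \mapsto x - Dh_0(x_0)^{-1}h_n(x)$, a contraction estimate on a small ball around $x_0$ obtained from $Dh_n \to Dh_0$ and continuity of $Dh_0$, and the same a posteriori fixed-point estimate yielding $\norm{x_n - x_0} \leq C\rho_n$ (you are in fact slightly more careful, checking the self-map property explicitly). One cosmetic remark: you do not need to assume $\rho_n \to 0$ — the threshold for the self-map can be taken from the uniform convergence in \eqref{eq: ift aanname} evaluated near $x_0$, with \eqref{eq: convergence rate} used only for the final quantitative bound.
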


\begin{proof}
Define the functions
\begin{align*}
f_0(x) = x - Dh_0(x_0)^{-1} h_0(x), \quad f_n(x) = x - Dh_0(x_0)^{-1} h_n(x)
\end{align*}
so that zero's of $h_n, h_0$ correspond to fixed points of $f_n, f_0$, respectively. 
Note that $D f_0(x_0) = 0$ and 
\[\lim_{n \to \infty} Df_n(x) = Df_0(x)\qquad\mbox{uniformly for } x \mbox{ in compact subsets}.\]
Therefore we can find a $\rho > 0$ and a $0 < q < 1$ such that $\|Df_n(x)\| < q$ for all $n\in \mathbb{N}, \ x \in B(x_0, \rho)$.  From the Mean Value Theorem we obtain that, for all $n \in \mathbb{N}$, $f_n: B(x_0, \rho) \to \mathbb{R}^d$ is Lipschitz with Lipschitz constant $q$. From the Contraction Mapping Principle, it follows that for all $n \in \mathbb{N}$, $f_n$ has a unique fixed point $x_n$ in $B(x_0, \rho)$. 
Moreover, if we let $U$ be a neighbourhood of $x_0$ and $(\rho_n)_{n \in \mathbb{N}}$ be as in \eqref{eq: convergence rate}, then  
\begin{align*}
\norm{x_n - x_0} & \leq  \norm{f_n(x_n) - f_n(x_0)} + \norm{f_n(x_0) - f_0(x_0)} \\
& < q \norm{x_n - x_0} + \norm{Dh_0(x_0)^{-1}} \rho_n.
\end{align*}
This yields the estimate
\begin{align*}
\norm{x_n - x_0} \leq \frac{\rho_n}{1-q} \| Dh_0(x_0)^{-1} \| . 
\end{align*}
Moreover, since $\lim_{n \to \infty} Dh_n(x_n) = Dh_0(x_0)$ and $Dh(x_0)$ is invertible, $Dh_n(x_n)$ is invertible for $n$ large enough. 
\end{proof}

\begin{prop} \label{cor: approx bif pt}
Consider system \eqref{eq: dde parameter} and suppose that Hypothesis \ref{hyp: condities functies dde} is satisfied. Moreover, suppose that there exist $\alpha_0 \in \mathbb{R}$ and {\color{black}$\omega_0 >0$} such that
\begin{enumerate}
\item $i \omega_0$ is a simple root of $\Delta_0(\lambda, \alpha_0) = 0 $;
\item The branch of roots of $\Delta_0(\lambda,\alpha)  = 0$ through $i\omega_0$ at $\alpha = \alpha_0$ intersects the imaginary axis transversally, i.e.,
\begin{align} \label{eq: crossing ift}
\re \left(D_1 \Delta_0(i \omega_0, \alpha_0)^{-1}D_2 \Delta_0(i \omega_0, \alpha_0)\right) \neq 0. 
\end{align}
\end{enumerate}
Then, for $n$ large enough, there exist $\alpha_n \in \mathbb{R}$, {\color{black}$\omega_n >0$} such that 
\begin{enumerate}
\item $i \omega_n$ is a simple root of $\Delta_n(\lambda, \alpha_n) = 0$; 
\item the branch of roots of $\Delta_n(\lambda, \alpha) = 0$ through $i\omega_n$ at $\alpha = \alpha_n$ intersects the imaginary axis transversally, i.e.
\begin{align} \label{eq: crossing approx}
\re \bigl(D_1 \Delta_n(i\omega_n,\alpha_n)^{-1} D_2 \Delta_n(i\omega_n,\alpha_n) \bigr) \neq 0.
\end{align}
\end{enumerate}
Moreover, there exists a $C > 0$ such that
\begin{align} \label{eq: error estimate prop}
\norm{(\alpha_n, \omega_n) - (\alpha_0, \omega_0)} \leq \frac{1}{\sqrt{n}} \left( \frac{C}{n}\right)^n, \quad \mbox{for } n \in \mathbb{N} \mbox{ large enough}.
\end{align}
\end{prop}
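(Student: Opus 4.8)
The plan is to apply the discrete implicit function theorem (Lemma~\ref{lem: discrete IFT}) to the maps $h_0$ and $h_n$, $n\in\mathbb{N}$, defined on a neighbourhood of $(\alpha_0,\omega_0)\in\mathbb{R}^2$ and taking values in $\mathbb{R}^2$ by
\[
h_j(\alpha,\omega) = \bigl(\re \Delta_j(i\omega,\alpha),\ \im \Delta_j(i\omega,\alpha)\bigr),
\]
so that a zero of $h_j$ at $(\alpha,\omega)$ corresponds precisely to a purely imaginary root $i\omega$ of $\Delta_j(\cdot,\alpha)=0$. These maps are $C^1$: $\Delta_0$ is analytic in $\lambda$ and $C^k$ in $\alpha$, and by Lemma~\ref{lem: convergence collocation} the matrix $D-\lambda I$ is invertible for $\lambda$ in a fixed compact neighbourhood of $i\omega_0$ once $n$ is large, so $\Delta_n$ has the same regularity there. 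Condition~1 of the hypothesis says $h_0(\alpha_0,\omega_0)=0$. First I would check that $Dh_0(\alpha_0,\omega_0)$ is invertible. Writing $a:=D_1\Delta_0(i\omega_0,\alpha_0)$ and $b:=D_2\Delta_0(i\omega_0,\alpha_0)$, the chain rule gives $\partial_\omega\Delta_0(i\omega,\alpha)=iD_1\Delta_0(i\omega,\alpha)$ and $\partial_\alpha\Delta_0(i\omega,\alpha)=D_2\Delta_0(i\omega,\alpha)$, hence
\[
Dh_0(\alpha_0,\omega_0) = \begin{pmatrix} \re b & -\im a \\ \im b & \re a \end{pmatrix}, \qquad \det Dh_0(\alpha_0,\omega_0) = \re a\,\re b + \im a\,\im b = \re\bigl(\overline{a}\, b\bigr).
\]
Condition~1 (simplicity of the root) guarantees $a\neq 0$, so $\re(\overline{a}\,b)=|a|^2\,\re(a^{-1}b)$, and condition~2 is exactly $\re(a^{-1}b)\neq 0$, see \eqref{eq: crossing ift}; therefore $Dh_0(\alpha_0,\omega_0)$ is invertible.

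Next I would verify the convergence hypotheses of Lemma~\ref{lem: discrete IFT} on a fixed compact neighbourhood $U$ of $(\alpha_0,\omega_0)$. Its image under $(\alpha,\omega)\mapsto(i\omega,\alpha)$ is a compact subset of $\mathbb{C}\times\mathbb{R}$, so Corollary~\ref{cor: ce} gives $h_n\to h_0$ and Corollary~\ref{cor: convergence derivatives ce} gives $\partial_\omega\Delta_n = iD_1\Delta_n \to iD_1\Delta_0 = \partial_\omega\Delta_0$, both uniformly on $U$ at the rate $\frac{1}{\sqrt n}(C/n)^n$. The remaining entries of $Dh_n$ involve $\partial_\alpha\Delta_n=D_2\Delta_n$, which is not covered directly by Corollary~\ref{cor: convergence derivatives ce}; here I would note that, by the definitions \eqref{eq: ce dde parameter} and \eqref{eq: ce ode},
\[
D_2\Delta_n(\lambda,\alpha) - D_2\Delta_0(\lambda,\alpha) = -\bigl(D_\alpha L(\alpha)\bigr)\bigl(\ell_0 + P(D-\lambda I)^{-1}D\mathbf{1} - \varepsilon_\lambda\bigr),
\]
and since $\norm{D_\alpha L(\alpha)}$ is bounded on compact $\alpha$-intervals, Lemma~\ref{lem: convergence collocation} yields $D_2\Delta_n\to D_2\Delta_0$ at the same exponential rate. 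Taking $\rho_n:=\frac{1}{\sqrt n}(C/n)^n$, with $C$ enlarged to absorb all constants on $U$, we obtain $\norm{h_n-h_0}\leq\rho_n$ on $U$ together with $h_n\to h_0$ and $Dh_n\to Dh_0$ uniformly on compact subsets.

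Lemma~\ref{lem: discrete IFT} (whose proof uses the maps only on a ball around the base point) then produces, for $n$ large, points $(\alpha_n,\omega_n)$ with $h_n(\alpha_n,\omega_n)=0$, with $Dh_n(\alpha_n,\omega_n)$ invertible, and with $\norm{(\alpha_n,\omega_n)-(\alpha_0,\omega_0)}\leq C'\rho_n$; absorbing $C'$ into the base of $(C/n)^n$ (legitimate for $n$ large) gives exactly \eqref{eq: error estimate prop}. Reading the conclusion back: $h_n(\alpha_n,\omega_n)=0$ means $i\omega_n$ is a root of $\Delta_n(\cdot,\alpha_n)=0$, and $\omega_n\to\omega_0>0$ gives $\omega_n>0$ for $n$ large; moreover, exactly as above,
\[
\det Dh_n(\alpha_n,\omega_n) = \re\bigl(\overline{D_1\Delta_n(i\omega_n,\alpha_n)}\; D_2\Delta_n(i\omega_n,\alpha_n)\bigr),
\]
and non-vanishing of this determinant forces both $D_1\Delta_n(i\omega_n,\alpha_n)\neq 0$ — so $i\omega_n$ is a simple root, which is condition~1 — and $\re\bigl(D_1\Delta_n(i\omega_n,\alpha_n)^{-1}D_2\Delta_n(i\omega_n,\alpha_n)\bigr)\neq 0$, which is \eqref{eq: crossing approx}.

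The main obstacle is the bookkeeping that ties the two spectral hypotheses to invertibility of the $2\times 2$ Jacobian: simplicity of the root (condition~1) is needed precisely to give meaning to, and then to exploit, the transversality condition~\eqref{eq: crossing ift}, and in the other direction one has to recover simplicity of the approximating root from the non-degeneracy handed back by the discrete implicit function theorem. The rest is a matter of tracking the exponential convergence rates already established, with the single twist that convergence of $\partial_\alpha\Delta_n$ must be derived directly from Lemma~\ref{lem: convergence collocation} rather than from Corollary~\ref{cor: convergence derivatives ce}.
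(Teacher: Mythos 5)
Your proof is correct and follows essentially the same route as the paper: the same auxiliary maps $h_0,h_n$, invertibility of the $2\times 2$ Jacobian deduced from analyticity of $\Delta$ in $\lambda$ (the paper phrases this via the Cauchy--Riemann equations rather than the chain rule, which is the same computation), and an application of Lemma \ref{lem: discrete IFT} followed by reading the simplicity and transversality conditions off the nonvanishing determinant. Your explicit verification that $D_2\Delta_n \to D_2\Delta_0$ at the exponential rate (a point the paper passes over, since Corollaries \ref{cor: ce} and \ref{cor: convergence derivatives ce} only cover $\Delta$ and $D_1\Delta$) is a welcome extra detail but does not change the argument.
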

\begin{proof}
Define the functions
$h_n, h_0: \mathbb{R}^2 \to \mathbb{R}^2$ as
\begin{align*}
h_n(\omega, \alpha) = \begin{pmatrix}
\re \Delta_n(i \omega, \alpha) \\
\im \Delta_n (i \omega, \alpha)
\end{pmatrix}, \qquad h_0(\omega, \alpha) = \begin{pmatrix}
\re \Delta_0( i \omega, \alpha) \\
\im \Delta_0(i \omega, \alpha)
\end{pmatrix}.
\end{align*}
Then $h_0(\omega_0, \alpha_0) = 0$ and \eqref{eq: ift aanname} is satisfied by Corollary \ref{cor: ce} and Corollary \ref{cor: convergence derivatives ce}. In order to apply Lemma \ref{lem: discrete IFT}, we only have to check that $Dh_0(\omega_0, \alpha_0)$ is invertible. 

For $\sigma, \omega \in \mathbb{R}$, write
\begin{equation*}
\Delta_0(\sigma + i \omega, \alpha_0) = f_1(\sigma, \omega)+ i f_2 (\sigma, \omega)
\end{equation*}
with $f_1, f_2 \in \mathbb{R}$. 
With this notation $Dh_0(\omega_0, \alpha_0)$ becomes
\begin{equation*}
Dh_0(\omega_0, \alpha_0) = \begin{pmatrix}
D_2 f_1(0, \omega_0) & \re D_2  \Delta_0(i \omega_0, \alpha_0) \\
D_2 f_2(0, \omega_0) & \im D_2  \Delta_0(i \omega_0, \alpha_0)
\end{pmatrix}. 
\end{equation*}
The Cauchy-Riemann equations read
\begin{equation*}
D_2 f_1(\sigma, \omega) = - D_1 f_2 (\sigma, \omega), \qquad D_2 f_2(\sigma, \omega) = D_1 f_1 (\sigma, \omega)
\end{equation*}
and hence
\begin{equation} \label{eq: Dh}
Dh_0(\omega_0, \alpha_0) = \begin{pmatrix}
- D_1 f_2 (0, \omega_0) & \re D_2  \Delta_0(i \omega_0, \alpha_0) \\
D_1 f_1 (0, \omega_0) & \im D_2  \Delta_0(i \omega_0, \alpha_0)
\end{pmatrix}.
\end{equation}
But now note that if we compute $D_1 \Delta_0(i \omega_0, \alpha_0)$, we may as well compute the difference quotient by taking the limit over the real axis, so
\begin{equation*}
 \re D_1 \Delta_0(i \omega_0, \alpha_0) = D_1 f_1 (0, \omega_0), \qquad \im D_1 \Delta_0(i \omega_0, \alpha_0) = D_1 f_2(0, \omega_0)
\end{equation*}
and \eqref{eq: Dh} becomes
\begin{equation*}
Dh_0(\omega_0, \alpha_0) = \begin{pmatrix}
-\im D_1 \Delta_0(i \omega_0, \alpha_0)  & \re D_2 \Delta_0(i \omega_0, \alpha_0) \\
\re D_1 \Delta_0(i \omega_0, \alpha_0) &\im  D_2 \Delta_0(i \omega_0, \alpha_0)
\end{pmatrix}.
\end{equation*}
The invertibility of the matrix $D h_0(\omega_0, \alpha_0)$ is equivalent to the condition \eqref{eq: crossing ift}. So we can apply Lemma \ref{lem: discrete IFT} to find a sequences $(i \omega_n)_{n \in \mathbb{N}}, (\alpha_n)_{n \in \mathbb{N}}$ with $\Delta_n(i \omega_n, \alpha_n) = 0$ and with the error estimate \eqref{eq: error estimate prop}. Moreover, a similar argument as before gives that the invertibility of $Dh_n(\omega_n, \alpha_n)$ is equivalent to the condition \eqref{eq: crossing approx}. 
\end{proof}

For equation \eqref{eq: blowfly unscaled}, the statements of Proposition \ref{cor: approx bif pt} are illustrated in Figure \ref{fig:eigenvalues}--\ref{fig:errors}. In Figure \ref{fig:eigenvalues}, the {\color{black}roots of the characteristic equation} of the pseudospectral approximation of \eqref{eq: blowfly unscaled} are plotted for different values of the parameter $\beta$. Figure \ref{fig:errors} shows the error in the detection of the Hopf point and the imaginary part of the {\color{black}root of the characteristic equation} for the pseudospectral approximation. We see that the desired tolerance level is obtained for relatively low values of the discretisation index ($n \approx 10$).  

\medskip 

Next we look at the non-resonance condition. Suppose that $\Delta_0(i \omega_0, \alpha_0) = 0$ but $\Delta_0(k i \omega_0, \alpha_0) \neq 0$ for all $k = 0, 2, \ldots$. Corollary \ref{cor: ce} gives that for fixed $k$, there exists a $N = N(k)$ such that $\Delta_n(k i\omega_n, \alpha_n) \neq 0$ for $n \geq N(k)$. However, this does not imply that we can choose this $N$ to be uniform in $k$, i.e., that we can find a $N$ such that 
\begin{align} \label{eq: nonresonancy}
\Delta_n(k i\omega_n, \alpha_n) \neq 0 \quad \mbox{for all } n \geq N \mbox{ and all }\ k = 0, 2, 3, \ldots.
\end{align}
So Corollary \ref{cor: ce} does not exclude that for every $n \in \mathbb{N}$ large enough there exists a $k(n)$ such that $\Delta_n(k(n) i \omega_n, \alpha_n) = 0$. This is clearly a non-generic situation, but in order to answer the third condition listed below Question \ref{question 2}, we have to exclude it explicitly. See also Section \ref{sec: outlook}. 

\medskip

Concerning the convergence of the direction coefficient we find: 

\begin{lemma} \label{lem: convergence direction coefficient}
Consider system \eqref{eq: dde parameter} and suppose that the hypotheses of Theorem \ref{thm: hopf bifurcation dde} are satisfied. Let $(\alpha_n, \omega_n)$ be as in Proposition \ref{cor: approx bif pt}. Then $\lim_{n \to \infty} a_{2n}  = a_{20}$. Moreover, if the nonlinearity $g: X \times \mathbb{R} \to X$ is $C^4$, then there exists a $C > 0$ such that
\begin{align*}
\left| a_{2n} - a_{20} \right| \leq \frac{1}{\sqrt{n}} \left( \frac{C}{n} \right)^n \qquad \mbox{for } n \in \mathbb{N} \mbox{ large enough}.  
\end{align*}
\end{lemma}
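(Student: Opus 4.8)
The idea is to compare the two expressions for the direction coefficient term by term, using the convergence results already established. Recall that $a_{2n} = \re c_n / \re(D_1 \Delta_n(i\omega_n,\alpha_n)^{-1} D_2 \Delta_n(i\omega_n,\alpha_n))$ and similarly for $a_{20}$, with $c_n$ and $c_0$ given by \eqref{eq: c ps} and \eqref{eq: c dde}. Since the denominator of $a_{20}$ is bounded away from zero (it is nonzero by assumption) and the denominator of $a_{2n}$ converges to it (by Corollary \ref{cor: convergence derivatives ce} together with the error estimate \eqref{eq: error estimate prop} for $(\alpha_n,\omega_n)$, and smoothness of $\alpha \mapsto L(\alpha)$), it suffices to prove $|c_n - c_0| \leq \tfrac{1}{\sqrt n}(C/n)^n$, and then the quotient estimate follows from elementary manipulations (writing the difference of two quotients over a common denominator and bounding numerator and denominator separately). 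So the crux is the convergence of $c_n$ to $c_0$ with the stated rate.

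\textbf{Comparison of the $c_n$.} Looking at \eqref{eq: c ps} and \eqref{eq: c dde}, both are built from the same ingredients: scalar prefactors $D_1 \Delta_\bullet(ki\omega_\bullet,\alpha_\bullet)^{-1}$ for $k\in\{0,1,2\}$, the derivatives $D_1^j g(0,\alpha_\bullet)$ for $j=2,3$, and certain ``inner'' arguments that in the DDE case are $\varepsilon_0 \Delta_0(0,\alpha_0)^{-1}(\cdots)$ and $\varepsilon_{2i\omega_0}\Delta_0(2i\omega_0,\alpha_0)^{-1}(\cdots)$ with $\phi = \varepsilon_{i\omega_0}$, while in the ODE case they are $\Delta_n(0,\alpha_n)^{-1} P_0\binom{1}{\mathbf 1}(\cdots)$ and $\Delta_n(2i\omega_n,\alpha_n)^{-1} P_0\binom{1}{(D-2i\omega_n I)^{-1}D\mathbf 1}(\cdots)$ with $P_0 p = P_0(1,(D-i\omega_n I)^{-1}D\mathbf 1)$. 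The key observation is that by Lemma \ref{lem: convergence collocation}, $\norm{P_0(1,(D-\lambda I)^{-1}D\mathbf 1) - \varepsilon_\lambda} = \norm{\ell_0 + P(D-\lambda I)^{-1}D\mathbf 1 - \varepsilon_\lambda} \leq \tfrac{1}{\sqrt n}(C/n)^n$ uniformly for $\lambda$ in a compact neighbourhood of $\{0, i\omega_0, 2i\omega_0\}$; applied at $\lambda = i\omega_n, 2i\omega_n, 0$ (which lie in a fixed compact set for $n$ large by \eqref{eq: error estimate prop}) this shows $P_0 p \to \phi$, and likewise the two ``kernel'' functions $P_0\binom{1}{\mathbf 1} \to \varepsilon_0$ and $P_0\binom{1}{(D-2i\omega_n I)^{-1}D\mathbf 1} \to \varepsilon_{2i\omega_0}$, all with the $\tfrac{1}{\sqrt n}(C/n)^n$ rate. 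Combined with Corollary \ref{cor: ce} and Corollary \ref{cor: convergence derivatives ce} (giving the same rate for $\Delta_n \to \Delta_0$ and $D_1\Delta_n \to D_1\Delta_0$, hence for their reciprocals since the limits are nonzero), the smoothness and hence local Lipschitz continuity of the multilinear maps $D_1^2 g(0,\cdot), D_1^3 g(0,\cdot)$ (here one needs $g$ to be $C^4$ so that $D_1^3 g(0,\alpha)$ is $C^1$ in $\alpha$, accounting for the extra hypothesis), and the convergence $\alpha_n \to \alpha_0$ with the same rate, a telescoping/product-rule estimate shows that each of the three summands of $c_n$ converges to the corresponding summand of $c_0$ with rate $\tfrac{1}{\sqrt n}(C/n)^n$.

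\textbf{Main obstacle.} The only genuinely delicate point is bookkeeping: one must verify that the ODE-side inner arguments really are the ``polynomial versions'' of the DDE-side arguments under the map $P_0$, i.e.\ that $\Delta_n(\mu,\alpha_n)^{-1} P_0(\text{eigenvector data at }\mu)$ plays exactly the role of $\varepsilon_\mu \Delta_0(\mu,\alpha_0)^{-1}$, and that composing with the fixed multilinear maps $D_1^j g$ and taking $\norm{\cdot}_X$-differences does not lose the superalgebraic rate. This requires that all the scalar factors and the norms $\norm{D_1^j g(0,\alpha_n)}$ stay bounded, which follows from continuity on a compact parameter set, and that the nonzero denominators stay bounded away from zero, which follows from the convergence of the denominators to nonzero limits. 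Once the structural correspondence is made explicit, the estimate is a routine application of the triangle inequality to a finite sum of products, each factor of which either converges at rate $\tfrac{1}{\sqrt n}(C/n)^n$ or is uniformly bounded. Finally, dividing by the denominators (bounded, bounded away from zero) preserves the rate, yielding the claimed bound on $|a_{2n} - a_{20}|$; the plain convergence $a_{2n} \to a_{20}$ without the $C^4$ hypothesis follows from the same argument using only continuity (rather than Lipschitz continuity) of the derivatives of $g$ and the qualitative convergences from Corollaries \ref{cor: ce} and \ref{cor: convergence derivatives ce} and Lemma \ref{lem: convergence collocation}.
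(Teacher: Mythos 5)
Your proposal is correct and follows essentially the same route as the paper: term-by-term comparison of $c_n$ and $c_0$ using Lemma \ref{lem: convergence collocation} (at $\lambda = i\omega_n, 2i\omega_n, 0$), Corollaries \ref{cor: ce} and \ref{cor: convergence derivatives ce}, the rate on $(\alpha_n,\omega_n)$ from Proposition \ref{cor: approx bif pt}, multilinearity/telescoping with Lipschitz continuity of the derivatives of $g$ (which is where $C^4$ enters), and a quotient estimate to pass from $c_n$ to $a_{2n}$. The only step you gloss over, which the paper makes explicit, is that comparing $P_0 p$ with $\phi=\varepsilon_{i\omega_0}$ (rather than with $\varepsilon_{i\omega_n}$) additionally uses the local Lipschitz continuity of $\lambda \mapsto \varepsilon_\lambda$ together with the rate on $|\omega_n-\omega_0|$; this is routine and fits within your scheme.
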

\begin{proof}
Throughout the proof, we use the symbol $C$ to denote a generic constant whose actual value may differ from line to line. For instance, an upper bound $C_1 C^n$, with $C_1 >  1$ , is replaced by the upperbound $C^n$ , with the second $C$ slightly larger than the first $C$.

{\color{black}We first prove that $\lim_{n \to \infty} c_n = c_0$, with $c_n$ defined as in \eqref{eq: c ps} and $c_0$ defined as in \eqref{eq: c dde}.} Given a compact neighbourhood $U$ of $i \omega_0$, Lemma \ref{lem: convergence collocation} gives a constant $C > 0$ such that
\begin{align} \label{eq: schatting 1}
\left \lVert \varepsilon_\lambda - P_0(1, (D- \lambda I)^{-1} D \textbf{1}) \right \rVert \leq \frac{1}{\sqrt{n}} \left( \frac{C}{n}\right)^n
\end{align}
for all $\lambda \in U$. By Proposition \ref{cor: approx bif pt}, there exists a $C > 0$ such that $\left \lVert (i \omega_n, \alpha_n) - (i \omega_0, \alpha_0) \right \rVert <  \frac{1}{\sqrt{n}} \left( \frac{C}{n}\right)^n$. Since the map $\lambda \mapsto \varepsilon_\lambda(\theta)$ is locally Lipschitz continuous, uniformly for $\theta \in [-1, 0]$, we can find a $C > 0$ such that 
\begin{align} \label{eq: schatting 2}
\left \lVert \varepsilon_{i \omega_0} - \varepsilon_{i \omega_n} \right \rVert \leq \frac{1}{\sqrt{n}} \left( \frac{C}{n}\right)^n
\end{align}
holds. 
Using \eqref{eq: schatting 1} and \eqref{eq: schatting 2} we obtain the estimate
\begin{equation}
\begin{aligned} \label{eq: schatting 3}
\left \lVert \varepsilon_{i \omega_0} - P_0(1, (D - i \omega_n I)^{-1} D \textbf{1}) \right \rVert &\leq \left \lVert \varepsilon_{i \omega_0} - \varepsilon_{i \omega_n} \right \rVert + \left \lVert \varepsilon_{i \omega_n} - P_0(1, (D - i \omega_n I)^{-1} D \textbf{1}) \right \rVert \\
& \leq  \frac{1}{\sqrt{n}} \left( \frac{C}{n}\right)^n. 
\end{aligned}
\end{equation}
We compare the first term of $c_n$ defined in \eqref{eq: c ps} with the first term of $c_0$ defined in \eqref{eq: c dde}. Writing $p = (1, (D- i \omega_n)^{-1} D \textbf{1})$ and $\phi = \varepsilon_{i \omega_0}$, we estimate
\begin{equation}
\begin{aligned} \label{eq: schatting 5}
\left \lVert D_1^3 g(0, \alpha_n)(P_0 p, P_0 p, P_0 \overline{p}) - D_1^3 g(0, \alpha_0) (\phi, \phi, \overline{\phi}) \right \rVert \leq  & \left \lVert D_1^3 g(0, \alpha_n) (\phi, \phi, \overline{\phi}) - D_1^3 g(0, \alpha_0) (\phi, \phi, \overline{\phi}) \right \rVert  \\\ &+ \left \lVert  D_1^3 g(0, \alpha_n) (\phi, \phi, \overline{\phi}) -  D_1^3 g(0, \alpha_n)(P_0 p, P_0 p, P_0 \overline{p}) \right \rVert
\end{aligned}
\end{equation}
Since the map $\alpha \mapsto D_1^3 g(0, \alpha)$ is continuous and $\alpha_n \to \alpha_0$ as $n \to \infty$, we obtain that \[\left \lVert D_1^3 g(0, \alpha_n) (\phi, \phi, \overline{\phi}) - D_1^3 g(0, \alpha_0) (\phi, \phi, \overline{\phi}) \right \rVert \to 0\] as $n \to \infty$. If $g$ is $C^4$, then the map $\alpha \mapsto D_1^3 g(0, \alpha) (\phi, \phi, \overline{\phi})$ is locally Lipschitz and we obtain 
\begin{equation}
\begin{aligned} \label{eq: schatting 4}
\left \lVert D_1^3 g(0, \alpha_n) (\phi, \phi, \overline{\phi}) - D_1^3 g(0, \alpha_0) (\phi, \phi, \overline{\phi}) \right \rVert &\leq C \left| \alpha_n - \alpha_0 \right| \\
& \leq  \frac{1}{\sqrt{n}} \left( \frac{C}{n}\right)^n. 
\end{aligned}
\end{equation}
Since the map $(u, v, w) \mapsto D_1^3 g(0, \alpha_n)(u, v, w)$ is linear in every argument, we can rewrite

\begin{align*}
D_1^3 g(0, \alpha_n) (\phi, \phi, \overline{\phi}) -  D_1^3 g(0, \alpha_n)(P_0 p, P_0 p, P_0 \overline{p})  \\ = D_1^3 g(0, \alpha_n)(\phi - P_0 p, \phi, \overline{\phi}) + D_1^3 g(0, \alpha_n) (P_0 p, \phi - P_0 p, \overline{\phi}) + D_1^3 g(0, \alpha_n) (P_0 p, P_0 p, \overline{\phi} - P_0 \overline{p}).
\end{align*} 
Combining this with \eqref{eq: schatting 3}, we obtain the estimate
\begin{align} \label{eq: schatting 6}
\left \lVert D_1^3 g(0, \alpha_n) (\phi, \phi, \overline{\phi}) -  D_1^3 g(0, \alpha_n)(P_0 p, P_0 p, P_0 \overline{p}) \right \rVert \leq \frac{1}{\sqrt{n}} \left( \frac{C}{n}\right)^n. 
\end{align}
So from  \eqref{eq: schatting 5}, \eqref{eq: schatting 4} and \eqref{eq: schatting 6} we conclude that
\begin{align*}
\lim_{n \to \infty} D_1^3 g(0, \alpha_n)(P_0 p, P_0 p, P_0 \overline{p}) = D_1^3 g(0, \alpha_0) (\phi, \phi, \overline{\phi})
\end{align*}
and if $g$ is $C^4$, then
\begin{align} \label{eq: schatting 7}
\left \lVert D_1^3 g(0, \alpha_n)(P_0 p, P_0 p, P_0 \overline{p}) - D_1^3 g(0, \alpha_0) (\phi, \phi, \overline{\phi}) \right \rVert \leq \frac{1}{\sqrt{n}} \left( \frac{C}{n}\right)^n.  
\end{align}
{\color{black}Now suppose that $(x_n)_{n \in \mathbb{N}} \subseteq \mathbb{C}, \ (y_n)_{n \in \mathbb{N}}  \subseteq \mathbb{C} \backslash \{0 \}$ are sequences with $\lim_{n \to \infty} x_n  = x_0, \ \lim_{n \to \infty} y_n = y_0 \neq 0$.} Then we find for their fraction
\begin{align} \label{eq: breuk}
\left| \frac{x_n}{y_n} - \frac{x_0}{y_0} \right| = \left| \frac{x_n y_0 - x_0 y_n}{y_n y_0} \right|  \leq \left| \frac{(x_n - x_0) y_0}{y_n y_0} \right| + \left| \frac{x_0 (y_n - y_0)}{y_n y_0} \right| \leq C \left(\left| x_n - x_0 \right| + \left| y_n - y_0 \right| \right).
\end{align}
By Corollary \ref{cor: convergence derivatives ce}, there exists a $C > 0$ such that
\begin{align*}
\left| D_1 \Delta_n(i \omega_n, \alpha_n) - D_1 \Delta_0(i \omega_0, \alpha_0) \right| \leq \frac{1}{\sqrt{n}} \left( \frac{C}{n}\right)^n. 
\end{align*}
So if we apply \eqref{eq: breuk} with $x_n = D_1^3 g(0, \alpha_n)(P_0 p, P_0 p, P_0 \overline{p})$ and $y_n = D_1 \Delta_n(i \omega_n, \alpha_n)$, we see that
\begin{align*}
\left \lVert D_1 \Delta_n(i \omega_n, \alpha_n)^{-1} D_1^3 g(0, \alpha_n)(P_0 p, P_0 p, P_0 \overline{p}) - D_1 \Delta_0(i \omega_0, \alpha_0)^{-1}  D_1^3 g(0, \alpha_0) (\phi, \phi, \overline{\phi}) \right \rVert \to 0 \mbox{ as } n \to \infty
\end{align*}
and if $g$ is $C^4$, then 
\begin{align*}
\left \lVert D_1 \Delta_n(i \omega_n, \alpha_n)^{-1} D_1^3 g(0, \alpha_n)(P_0 p, P_0 p, P_0 \overline{p}) - D_1 \Delta_0(i \omega_0, \alpha_0)^{-1}  D_1^3 g(0, \alpha_0) (\phi, \phi, \overline{\phi}) \right \rVert
\leq \frac{1}{\sqrt{n}} \left( \frac{C}{n}\right)^n.
\end{align*}

Applying similar arguments to the second and third term of $c_n$, we find that $\lim_{n \to \infty} c_n = c_0$; if $g$ is $C^4$, we obtain the error estimate
\begin{align*}
\left| c_n - c_0 \right| \leq \frac{1}{\sqrt{n}} \left( \frac{C}{n}\right)^n.
\end{align*}

To analyse the convergence of the direction coefficient $a_{2n}$, we apply \eqref{eq: breuk} with $x_n = \re c_n$ and $y_n = \re \left(D_1 \Delta_n(i \omega_n, \alpha_n)^{-1} D_2 \Delta_n(i \omega_n, \alpha_n)\right)$. We conclude that 
\begin{align*}
\left| a_{2n} - a_{20} \right| \to 0 \quad \mbox{as } n \to \infty
\end{align*}
and if $g$ is $C^4$, then 
\begin{align*}
\left| a_{2n} - a_{20} \right| \leq \frac{1}{\sqrt{n}} \left( \frac{C}{n}\right)^n
\end{align*}
which proves the claim. 
 \end{proof}

Summarising, we find the following answer to Question \ref{question 1}:

\begin{prop} \label{prop 1}
Consider system \eqref{eq: dde parameter} and suppose that the hypotheses of Theorem \ref{thm: hopf bifurcation dde} are satisfied. Moreover, with $\alpha_n, \omega_n$ as in Proposition \ref{cor: approx bif pt}, assume that
\begin{align} \label{eq: non-resonance}
\mbox{for } n \in \mathbb{N} \mbox{ large enough, } \qquad \Delta_n(k i \omega_n, \alpha_n) \neq 0 \qquad \mbox{ for } k = 0, 2, 3, \ldots
\end{align}
Then the hypotheses of Theorem \ref{thm: hopf ps} are satisfied and $\lim_{n \to \infty} a_{2n} = a_{20}$. Moreover, if the nonlinearity $g: X \times \mathbb{R} \to X$ is $C^4$, then there exists a $C > 0$ such that
\begin{align*}
\left| a_{2n} - a_{20} \right| \leq \frac{1}{\sqrt{n}} \left( \frac{C}{n} \right)^n \qquad \mbox{for } n \in \mathbb{N} \mbox{ large enough}. 
\end{align*}
\end{prop}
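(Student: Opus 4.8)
The plan is to assemble ingredients that have already been prepared above; no new estimate is required, and the argument is essentially bookkeeping. The statement splits into two parts: first, that the three hypotheses of Theorem \ref{thm: hopf ps} hold at $(\alpha_n,\omega_n)$ for $n$ large; second, that the direction coefficients converge, with the stated rate when $g$ is $C^4$.

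For the first part I would verify the conditions of Theorem \ref{thm: hopf ps} in turn. Conditions 1 and 2 — that $i\omega_n$ is a simple root of $\Delta_n(\cdot,\alpha_n)$ and that the corresponding branch of roots crosses the imaginary axis transversally — are precisely the two conclusions of Proposition \ref{cor: approx bif pt}, valid for all $n$ large enough. Condition 3, the non-resonance requirement that $k i\omega_n$ is not a root of $\Delta_n(\cdot,\alpha_n)$ for $k=0,2,3,\dots$, is exactly the standing assumption \eqref{eq: non-resonance}. Hence all three hypotheses of Theorem \ref{thm: hopf ps} are met for $n$ large, so that theorem applies and yields a Hopf bifurcation at $\alpha=\alpha_n$ together with the expansion $\alpha^\ast(\epsilon)=\alpha_n+a_{2n}\epsilon^2+o(\epsilon^2)$, with $a_{2n}$ and $c_n$ as in \eqref{eq: c ps}. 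I note in passing that $c_n$ is already well defined for $n$ large without invoking the full assumption \eqref{eq: non-resonance}: only $\Delta_n(0,\alpha_n)\neq 0$ and $\Delta_n(2i\omega_n,\alpha_n)\neq 0$ enter its denominators, and these follow from the non-vanishing of $\Delta_0$ at $0$ and at $2i\omega_0$ together with Corollary \ref{cor: ce} applied to the two fixed values $k=0$ and $k=2$.

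For the second part, the convergence $\lim_{n\to\infty}a_{2n}=a_{20}$ and, under the additional hypothesis that $g$ is $C^4$, the bound $\left|a_{2n}-a_{20}\right|\le \frac{1}{\sqrt n}\left(\frac{C}{n}\right)^n$ are exactly the conclusions of Lemma \ref{lem: convergence direction coefficient}, whose hypotheses — the hypotheses of Theorem \ref{thm: hopf bifurcation dde}, and $(\alpha_n,\omega_n)$ as in Proposition \ref{cor: approx bif pt} — are in force here. This completes the proof.

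There is no genuine obstacle within this argument itself; the essential work has already been carried out in Proposition \ref{cor: approx bif pt} and Lemma \ref{lem: convergence direction coefficient}. The real difficulty has been isolated into the hypothesis \eqref{eq: non-resonance}: as explained in the discussion preceding the proposition, Corollary \ref{cor: ce} only gives $\Delta_n(ki\omega_n,\alpha_n)\neq 0$ for each fixed $k$ once $n\ge N(k)$, with no control on how $N(k)$ depends on $k$; ruling out a sequence $k(n)$ with $\Delta_n(k(n)i\omega_n,\alpha_n)=0$ would require uniform control of the roots of $\Delta_n$ far out along the imaginary axis, which is why it is imposed as a separate (generically satisfied) condition rather than derived here.
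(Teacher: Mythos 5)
Your proposal is correct and follows exactly the route the paper intends: the proposition is stated as a summary, with conditions 1 and 2 of Theorem \ref{thm: hopf ps} supplied by Proposition \ref{cor: approx bif pt}, condition 3 supplied by the standing assumption \eqref{eq: non-resonance}, and the convergence of $a_{2n}$ to $a_{20}$ (with the $C^4$ rate) supplied by Lemma \ref{lem: convergence direction coefficient}. Your side remark that the denominators in $c_n$ only require non-vanishing of $\Delta_n$ at $0$ and $2i\omega_n$ is a correct and harmless refinement, but otherwise the argument coincides with the paper's.
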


We now consider Question \ref{question 2}. {\color{black}Suppose that we have sequences $(\alpha_n)_{n \in \mathbb{N}}$, $(\omega_n)_{n \in \mathbb{N}}$
with $\lim_{n \to \infty} \alpha_n = \alpha \in \mathbb{R}, \ \lim_{n \to \infty} \omega_n = \omega_0 \neq 0$. Suppose that $i\omega_n$ is a simple root of $\Delta_n(\lambda, \alpha_n) = 0$ and such that this root crosses the axis transversely if we vary $\alpha$. Then $\Delta_0(i \omega_0, \alpha_0) = 0$ but we have to make additional assumptions to make sure that this root is simple and it crosses the axis transversely if we vary $\alpha$.} Similarly, if for $n \in \mathbb{N}$ large enough, it holds that $\Delta_n(k i \omega_n, \alpha_n) \neq 0$ for $k = 0, 2, 3, \ldots$, we have to make additional assumptions to ensure that $\Delta_0(k i \omega_0, \alpha_0) \neq 0$ for $k = 0, 2, 3\ldots$. 

\begin{prop} \label{prop 2}
Consider system \eqref{eq: dde parameter} and suppose that there exists a $N_0 \in \mathbb{N}$ such that for $n \in\mathbb{N}, \ n \geq N_0$ the hypotheses of Theorem \ref{thm: hopf ps} are satisfied with $\lim_{n \to \infty} \alpha_{n} = \alpha_0, \ {\color{black}\lim_{n \to \infty} \omega_n = \omega_0 \neq 0}$ and $\lim_{n \to \infty} a_{2n} = a_{20}'$. Moreover, suppose that
\begin{enumerate}
\item The sequence $( D_1 \Delta_n(i \omega_n, \alpha_n))_{n \geq N_0}$ is uniformly bounded away from zero;
\item The sequence $\left(\re \left( D_1 \Delta_n( i\omega_n, \alpha_n)^{-1} D_2 \Delta_n(i \omega_n, \alpha_n)\right)\right)_{n \geq N_0}$ is uniformly bounded away from zero;
\item For each $k = 0, 2, 3 \ldots$, the sequence $(\Delta_n(k i \omega_n, \alpha_n))_{n \geq N_0}$ is uniformly bounded away from zero. 
 \end{enumerate} Then the Hypotheses of Theorem \ref{thm: hopf bifurcation dde} are satisfied and the direction coefficient is given by $a_{20}'$, i.e. $a_{20} = a_{20}'$. 
\end{prop}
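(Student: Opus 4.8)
The plan is to transfer the three nondegeneracy conditions of Theorem~\ref{thm: hopf bifurcation dde} from the pseudospectral approximations to the DDE, one by one, and then to identify the two direction coefficients. The mechanism is the same each time: the quantity appearing in a condition for $\Delta_0$ at $(i\omega_0,\alpha_0)$ is the limit, as $n\to\infty$, of the corresponding quantity for $\Delta_n$ at $(i\omega_n,\alpha_n)$, so the assumption that the latter stays uniformly bounded away from zero forces the former to be nonzero. The convergence inputs are Corollary~\ref{cor: ce} ($\Delta_n\to\Delta_0$ uniformly on compact subsets of $\mathbb{C}\times\mathbb{R}$), Corollary~\ref{cor: convergence derivatives ce} ($D_1\Delta_n\to D_1\Delta_0$ likewise), the continuity of $\Delta_0$, $D_1\Delta_0$ and $D_2\Delta_0$ in $(\lambda,\alpha)$, and the assumed convergence $(i\omega_n,\alpha_n)\to(i\omega_0,\alpha_0)$ (with $\omega_0>0$ since each $\omega_n>0$). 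Note that $\{(i\omega_n,\alpha_n):n\ge N_0\}$, and likewise $\{(ki\omega_n,\alpha_n):n\ge N_0\}$ for any fixed $k$, lie together with their limit point in a common compact subset of $\mathbb{C}\times\mathbb{R}$, so the corollaries apply at these points.

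First I would treat conditions~1 and~3. Evaluating at $(i\omega_n,\alpha_n)$ and using the triangle inequality
\[
\left|\Delta_0(i\omega_0,\alpha_0)\right|\le\left|\Delta_0(i\omega_0,\alpha_0)-\Delta_0(i\omega_n,\alpha_n)\right|+\left|\Delta_0(i\omega_n,\alpha_n)-\Delta_n(i\omega_n,\alpha_n)\right|
\]
gives $\Delta_0(i\omega_0,\alpha_0)=0$: the first term tends to $0$ by continuity of $\Delta_0$, the second by Corollary~\ref{cor: ce}, while $\Delta_n(i\omega_n,\alpha_n)=0$. The same splitting with $D_1\Delta$ in place of $\Delta$ and Corollary~\ref{cor: convergence derivatives ce} in place of Corollary~\ref{cor: ce} gives $D_1\Delta_n(i\omega_n,\alpha_n)\to D_1\Delta_0(i\omega_0,\alpha_0)$; by the first boundedness assumption the left-hand side is bounded below in modulus by some $\delta>0$, so $D_1\Delta_0(i\omega_0,\alpha_0)\neq0$, i.e.\ $i\omega_0$ is a simple root, which is condition~1. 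For condition~3 I argue $k$ by $k$: for each fixed $k\in\{0,2,3,\dots\}$ the same splitting gives $\Delta_n(ki\omega_n,\alpha_n)\to\Delta_0(ki\omega_0,\alpha_0)$, and the third boundedness assumption then forces $\Delta_0(ki\omega_0,\alpha_0)\neq0$. No uniformity in $k$ is available or needed here, just as in the converse direction, where \eqref{eq: non-resonance} had to be imposed as a hypothesis.

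Condition~2 additionally requires $D_2\Delta_n(i\omega_n,\alpha_n)\to D_2\Delta_0(i\omega_0,\alpha_0)$, which is not directly covered by the stated corollaries; this is the one place where a small separate argument is needed. Since $\ell_0$, $P$ and $D$ do not depend on $\alpha$, differentiating \eqref{eq: ce ode} and \eqref{eq: ce dde parameter} gives $D_2\Delta_n(\lambda,\alpha)=-\bigl(D_\alpha L(\alpha)\bigr)\bigl(\ell_0+P(D-\lambda I)^{-1}D\mathbf{1}\bigr)$ and $D_2\Delta_0(\lambda,\alpha)=-\bigl(D_\alpha L(\alpha)\bigr)\varepsilon_\lambda$, so their difference is bounded by $\|D_\alpha L(\alpha)\|\cdot\norm{\ell_0+P(D-\lambda I)^{-1}D\mathbf{1}-\varepsilon_\lambda}$, which tends to $0$ uniformly on compacts by Lemma~\ref{lem: convergence collocation} and the local boundedness of $\alpha\mapsto\|D_\alpha L(\alpha)\|$. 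Combining $D_1\Delta_n\to D_1\Delta_0\neq0$ with $D_2\Delta_n\to D_2\Delta_0$ at the relevant points,
\[
\re\bigl(D_1\Delta_n(i\omega_n,\alpha_n)^{-1}D_2\Delta_n(i\omega_n,\alpha_n)\bigr)\;\longrightarrow\;\re\bigl(D_1\Delta_0(i\omega_0,\alpha_0)^{-1}D_2\Delta_0(i\omega_0,\alpha_0)\bigr),
\]
and the second boundedness assumption forces the limit to be nonzero, which is condition~2. Together with Hypothesis~\ref{hyp: condities functies dde} (in force, as it is part of the hypotheses of Theorem~\ref{thm: hopf ps}), this establishes all the hypotheses of Theorem~\ref{thm: hopf bifurcation dde}, so the DDE undergoes a Hopf bifurcation at $\alpha_0$ with a well-defined direction coefficient $a_{20}$.

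Finally I would show $a_{20}=a_{20}'$. Having established conditions~1 and~2 for the DDE, Proposition~\ref{cor: approx bif pt} applies, and by the uniqueness built into the contraction argument of Lemma~\ref{lem: discrete IFT} the pair $(\alpha_n,\omega_n)$ is, for $n$ large, precisely the root of $\Delta_n=0$ near $(i\omega_0,\alpha_0)$ produced by that proposition; hence our sequence coincides with the one there from some index on. Since condition~3 also holds, Lemma~\ref{lem: convergence direction coefficient} then gives $a_{2n}\to a_{20}$. (Alternatively, one may avoid the uniqueness identification altogether: the proof of Lemma~\ref{lem: convergence direction coefficient} only uses $(\alpha_n,\omega_n)\to(\alpha_0,\omega_0)$ together with Lemma~\ref{lem: convergence collocation} and Corollaries~\ref{cor: ce} and~\ref{cor: convergence derivatives ce} to obtain $c_n\to c_0$ and hence $a_{2n}\to a_{20}$, so the same computation applies verbatim to our sequence.) Combining $a_{2n}\to a_{20}$ with the assumed $a_{2n}\to a_{20}'$ and uniqueness of limits yields $a_{20}=a_{20}'$. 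As far as difficulty goes, there is no deep obstacle: the proposition is essentially a matter of combining the earlier convergence statements with the uniform-boundedness hypotheses; the points that require a little care are the separate argument for $D_2\Delta_n\to D_2\Delta_0$, the observation that non-resonance must be checked one value of $k$ at a time, and the identification of $(\alpha_n,\omega_n)$ with the sequence of Proposition~\ref{cor: approx bif pt} for the last step.
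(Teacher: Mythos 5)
Your proposal is correct and follows essentially the same route as the paper's own proof: pass to the limit in $\Delta_n(i\omega_n,\alpha_n)=0$ and in the three quantities assumed uniformly bounded away from zero (via Corollary \ref{cor: ce}, Corollary \ref{cor: convergence derivatives ce} and continuity), and then repeat the argument of Lemma \ref{lem: convergence direction coefficient} to conclude $a_{2n}\to a_{20}$, hence $a_{20}=a_{20}'$. The only difference is that you spell out details the paper leaves implicit --- notably the convergence $D_2\Delta_n\to D_2\Delta_0$ deduced from Lemma \ref{lem: convergence collocation}, and the identification of $(\alpha_n,\omega_n)$ with the sequence of Proposition \ref{cor: approx bif pt} (or the remark that the proof of Lemma \ref{lem: convergence direction coefficient} applies verbatim) --- which is a careful elaboration rather than a different approach.
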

\begin{proof} Taking the limit in $\Delta_n( i \omega_n, \alpha_n) = 0$ gives that $\Delta_0(i \omega_0, \alpha_0) = 0$. 
The conditions (1), (2) and (3) ensure that $D_1 \Delta_0(i \omega_0, \alpha_0) \neq 0$, $\re \left( (D_1 \Delta(i \omega_0, \alpha_0))^{-1} D_2 \Delta(i \omega_0, \alpha_0) \right) \neq 0$ and $\Delta_0(k i \omega_0, \alpha_0) \neq 0$ for $k = 0, 2, 3, \ldots$. Moreover, as in the proof of Lemma \ref{lem: convergence direction coefficient} 
we find that $\lim_{n \to\infty} a_{2n} = a_{20}$, which implies that $a_{20} = a_{20}'$. 
\end{proof}

\section{Systems}
We formulate the relevant definitions and results for systems of DDE. 

Let $d \in \mathbb{N}$ and consider the system
\begin{align} \label{eq: dde system}
x'(t) = L(\alpha)x_t + g(x_t, \alpha), \quad t \geq 0
\end{align}
with state space $X = C \left([-1, 0], \mathbb{R}^d\right)$, $\alpha \in \mathbb{R}$ a parameter, $L(\alpha): X \to \mathbb{R}^d$ a bounded linear operator and $g: X \times \mathbb{R} \to \mathbb{R}^d$. We summarise the relevant assumptions on $L$ and $g$ in the following hypothesis:
\begin{hypothesis} \hfill \label{hyp: condities functies system}
\begin{enumerate}
\item $g: X \times \mathbb{R} \to \mathbb{R}^d$ and $\alpha \to L(\alpha)$ are $C^k$ smooth for some $k \geq 3$;
\item $g(0, \alpha) = 0$ and $D_1 g(0, \alpha) =0$ for all $\alpha \in \mathbb{R}$.
\end{enumerate}
\end{hypothesis}

\medskip

{\color{black}Under this hypothesis, \eqref{eq: dde system} has an equilibrium $x = 0$ for all $\alpha \in \mathbb{R}$.} The linearisation of \eqref{eq: dde system} has a solution of the form $t \mapsto e^{\lambda t} c, \ c \in \mathbb{C}^d$ if and only if {\color{black}$\lambda$ is a root of the characteristic equation}
\begin{align*}
\det \Delta_0(\lambda, \alpha) = 0
\end{align*}
{\color{black}where the operator $\Delta_0(\lambda, \alpha): \mathbb{C}^d \to \mathbb{C}^d$ is defined as}
\begin{align} \label{eq: ce system}
\Delta_0(\lambda, \alpha) = \lambda I_d - L(\alpha) \varepsilon_\lambda
\end{align}
with $I_d: \mathbb{C}^d \to \mathbb{C}^d$ is the identity operator and $\varepsilon_\lambda$ defined as in \eqref{eq:exp-fct}. In \eqref{eq: ce system}, $L(\alpha) \varepsilon_\lambda$ maps $\mathbb{C}^d $ to $\mathbb{C}^d$ in the following way: given $v \in \mathbb{C}^d$, the function $(\varepsilon_\lambda v)(\theta) = \varepsilon_\lambda(\theta)v$ is an element of $C \left([-1, 0], \mathbb{C}^d \right)$; then $L(\alpha) \left(\varepsilon_\lambda v \right)$ is a vector in $\mathbb{C}^d$. 

If $i \omega_0$ is a simple root of $\det \Delta_0(\lambda, \alpha_0) = 0$, then $\Delta_0(i \omega_0, \alpha_0)$ has a one-dimensional kernel. Moreover, if $p, q \in \mathbb{C}^d \backslash \{0 \}$ are such that $\Delta_0(i \omega_0, \alpha_0) p = 0, \ \Delta_0(i \omega_0, \alpha_0)^T q = 0$, then $q \cdot D_1 \Delta_0(i \omega_0, \alpha_0) p  \neq 0$, see \cite[Exercise IV.3.12]{sunstarbook}. In particular, we can (and will) scale $p, q$ such that $q \cdot D_1 \Delta_0(i \omega_0, \alpha_0) p  = 1$. 

\begin{theorem}[Hopf bifurcation theorem for \emph{systems} of DDE]  \label{thm: hopf bifurcation dde systems}
Consider system \eqref{eq: dde system} and suppose that Hypothesis \ref{hyp: condities functies system} is satisfied. Moreover, suppose that there exist $\alpha_0 \in \mathbb{R}$ and {\color{black}$\omega_0 >0$} such that
\begin{enumerate}
\item $i \omega_0$ is a simple root of $\det \Delta_0(\lambda, \alpha_0) = 0 $;
\item The branch of roots of $\det \Delta_0(\lambda,\alpha)  = 0$ through $i\omega_0$ at $\alpha = \alpha_0$ intersects the imaginary axis transversally, i.e., the real part of the derivative of the roots along the branch is non-zero. If we denote by $p, q \in \mathbb{C}^d \backslash \{0 \}$ the vectors such that $\Delta_0(i \omega_0, \alpha_0)p= 0, \ \Delta_0(i \omega_0, \alpha_0)^T q = 0$ and $q \cdot D_1 \Delta_0(i \omega_0, \alpha_0) p =1$, then this condition amounts to
\begin{align*}
\re \left(q \cdot D_2 \Delta_0(i \omega_0, \alpha_0) p \right) \neq 0;
\end{align*}
\item {\color{black}$k i \omega_0$ is not a root of $\det \Delta_0(\lambda, \alpha_0)$ for $k = 0, 2, 3, \ldots$}
\end{enumerate}
Then a Hopf bifurcation occurs for $\alpha = \alpha_0$. This means that there exist $C^{k-1}$-functions $\epsilon \mapsto \alpha^\ast(\epsilon)$, $\epsilon \mapsto \omega^\ast(\epsilon)$ taking values in $\mathbb{R}$ and {\color{black}$\epsilon \mapsto x^\ast(\epsilon) \in C_b\left(\mathbb{R}, \mathbb{R}^d\right)$}, all defined for $\epsilon$ sufficiently small, such that for $\alpha = \alpha^\ast(\epsilon), \ x^\ast(\epsilon)$ is a periodic solution of \eqref{eq: dde system} with period $2 \pi/\omega^\ast(\epsilon)$. Moreover, $\alpha^\ast, \omega^\ast$ are even functions, $\alpha^\ast(0) = \alpha_0, \ \omega^\ast(0) = \omega_0$ and if $x$ is any small periodic solution of\eqref{eq: dde system} for $\alpha$ close to $\alpha_0$ and minimal period close to $2 \pi/\omega_0$, then $x(t) = x^\ast(\epsilon)(t+\theta^\ast)$ and $\alpha = \alpha^\ast(\epsilon)$ for some $\epsilon$ and some $\theta \in [0, 2 \pi/\omega^\ast(\epsilon))$. 

Moreover, $\alpha^\ast$ has the expansion 
$\alpha^\ast(\epsilon) = \alpha_0 + a_{20} \epsilon^2 + o(\epsilon^2)$, with $a_{20}$ given by
\begin{align*}
a_{20} = \frac{\re c}{\re \left(q \cdot  D_2 \Delta_0(i \omega_0, \alpha_0)p\right)}
\end{align*}
where
\begin{equation} \label{eq: c dde system}
\begin{aligned}
c = & \frac{1}{2} q \cdot D_1^3 g(0, \alpha_0)(\phi, \phi, \overline{\phi}) \\ & \qquad + q \cdot D_1^2 g(0, \alpha_0) \bigl(\varepsilon_0 \Delta_0(0, \alpha_0)^{-1} D_1^2 g(0, \alpha_0) (\phi, \overline{\phi}), \phi\bigr) \\
&\qquad +\frac{1}{2} q\cdot D_1^2 g(0, \alpha_0) \bigl(\varepsilon_{2 i \omega_0} \Delta_0(2 i \omega_0, \alpha_0)^{-1} D_1^2 g(0, \alpha_0) (\phi, \phi), \overline{\phi}\bigr) \bigr)
\end{aligned}
\end{equation}
with $\phi := \varepsilon_{i \omega_0} p$. 
\end{theorem}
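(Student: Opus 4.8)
The plan is to deduce the theorem from the abstract Hopf bifurcation theorem for delay equations in the sun--star framework of \cite[Chapter X]{sunstarbook}, after rephrasing the three hypotheses as the conditions needed there and checking that the abstract normal form coefficient reduces to \eqref{eq: c dde system}.

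First I would recall that, near the equilibrium $0$, equation \eqref{eq: dde system} generates a $C_0$-semigroup on $X = C([-1,0],\mathbb{R}^d)$ whose generator $A(\alpha)$ has spectrum equal to the set of roots of $\det \Delta_0(\lambda,\alpha) = 0$, with $i\omega_0$ an algebraically simple eigenvalue of $A(\alpha_0)$ precisely when it is a simple root of $\det \Delta_0(\lambda,\alpha_0) = 0$; in that case $\ker(i\omega_0 I - A(\alpha_0)) = \mathrm{span}\{\phi\}$ with $\phi = \varepsilon_{i\omega_0}p$, cf.\ \cite[Section IV.3]{sunstarbook}. Condition $3$ then ensures that $\{i\omega_0,-i\omega_0\}$ are the only purely imaginary eigenvalues, so that the local center manifold is exactly two-dimensional, and that $\Delta_0(0,\alpha_0)$ and $\Delta_0(2i\omega_0,\alpha_0)$ are invertible. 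For condition $2$, let $\alpha \mapsto (\lambda(\alpha),v(\alpha))$ be the smooth branch with $\Delta_0(\lambda(\alpha),\alpha)v(\alpha) = 0$, $\lambda(\alpha_0) = i\omega_0$, $v(\alpha_0) = p$; differentiating at $\alpha_0$ and pairing with $q$, which annihilates $\Delta_0(i\omega_0,\alpha_0)$ on the left, gives
\[ \lambda'(\alpha_0) = -\frac{q\cdot D_2 \Delta_0(i\omega_0,\alpha_0)p}{q\cdot D_1 \Delta_0(i\omega_0,\alpha_0)p}, \]
which is well defined since $q\cdot D_1 \Delta_0(i\omega_0,\alpha_0)p \neq 0$ for a simple root \cite[Exercise IV.3.12]{sunstarbook}; with the normalisation $q\cdot D_1 \Delta_0(i\omega_0,\alpha_0)p = 1$, transversality $\re \lambda'(\alpha_0)\neq 0$ is exactly $\re(q\cdot D_2 \Delta_0(i\omega_0,\alpha_0)p)\neq 0$. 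With the hypotheses in this form, the abstract theorem yields the branch of periodic solutions, its uniqueness up to phase shift, the evenness of $\alpha^\ast$ and $\omega^\ast$, and the claimed smoothness.

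It then remains to identify the direction coefficient, and here the cleanest route is to repeat, in the sun--star setting, the reduction of the ODE Hopf formula of Theorem \ref{thm: hopf bif ode} to the pseudospectral-ODE formula of Theorem \ref{thm: hopf ps} carried out in Section \ref{sec: ps approximation}; the two computations are formally identical once the relevant objects are matched. Concretely, in the variation-of-constants formulation the nonlinearity $g$ enters through a map sending $r \in \mathbb{R}^d$ to an element of the sun--star space $X^{\odot\ast}$, and the resolvent of the generator applied to such an element equals $\varepsilon_\lambda\,\Delta_0(\lambda,\alpha_0)^{-1}r$, the exact analogue of the identity $(\lambda I - A_n(\alpha))^{-1}\begin{pmatrix}1\\0\end{pmatrix} = \Delta_n(\lambda,\alpha)^{-1}\begin{pmatrix}1\\(D-\lambda I)^{-1}D\mathbf{1}\end{pmatrix}$ used earlier. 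Using the spectral projection onto $\mathrm{span}\{\phi,\overline{\phi}\}$ and its sun--dual counterpart (whose action amounts to pairing with $q$), one solves for the quadratic Taylor coefficients of the center manifold: the coefficient of $z\overline{z}$ is $\varepsilon_0\,\Delta_0(0,\alpha_0)^{-1}D_1^2 g(0,\alpha_0)(\phi,\overline{\phi})$ and the coefficient of $z^2$ is $\varepsilon_{2i\omega_0}\,\Delta_0(2i\omega_0,\alpha_0)^{-1}D_1^2 g(0,\alpha_0)(\phi,\phi)$. Substituting these into the cubic coefficient of the reduced planar vector field and pairing against $q$ — normalised by $q\cdot D_1 \Delta_0(i\omega_0,\alpha_0)p = 1$, which is what absorbs the factor $D_1 \Delta_0(i\omega_0,\alpha_0)^{-1}$ still visible in the scalar formula \eqref{eq: c dde} — reproduces exactly $c$ in \eqref{eq: c dde system}; dividing by $\re(q\cdot D_2 \Delta_0(i\omega_0,\alpha_0)p)$ as in the planar Hopf theorem gives the stated $a_{20}$. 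The overall $+$ sign (as opposed to the $-$ sign in front of $c$ in Theorem \ref{thm: hopf bif ode}) arises because $D_2 \Delta_0(i\omega_0,\alpha_0)$ equals minus the $\alpha$-derivative of the effective linear part $L(\alpha)\varepsilon_\lambda$, so the denominator above is minus the eigenvalue crossing velocity; this matches the convention already used in Theorem \ref{thm: hopf bifurcation dde}.

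The main obstacle is this last identification of the abstract normal form coefficient with the closed form \eqref{eq: c dde system}: it requires setting up the sun--star duality, the embedding of $\mathbb{R}^d$ into $X^{\odot\ast}$, the precise shape of the resolvent on that embedded subspace, and careful tracking of normalisation constants. All the needed machinery is in \cite[Chapters IV and X]{sunstarbook}, and the case $d = 1$ specialises to the already-stated Theorem \ref{thm: hopf bifurcation dde}, which serves as a consistency check: with $p,q$ scalars normalised by $q\cdot D_1 \Delta_0 p = 1$ one has $q\cdot D_2 \Delta_0 p = D_1 \Delta_0(i\omega_0,\alpha_0)^{-1}D_2 \Delta_0(i\omega_0,\alpha_0)$, and $c$ in \eqref{eq: c dde system} coincides with $c_0$ in \eqref{eq: c dde}. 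By contrast, the verification of conditions $1$--$3$ and of the transversality identity is routine linear analysis of the characteristic matrix $\Delta_0$.
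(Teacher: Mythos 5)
Your proposal is essentially the paper's own route: the paper does not prove this theorem but states it as the known lift of the Hopf theorem to DDE via the centre-manifold/sun--star reduction of \cite[Chapter X]{sunstarbook} (with the spectral facts about $\Delta_0$ from \cite[Section IV.3]{sunstarbook}), which is exactly the derivation you sketch, including the correct transversality identity $\lambda'(\alpha_0) = -q\cdot D_2\Delta_0(i\omega_0,\alpha_0)p$ explaining the sign of $a_{20}$ and the specialisation to the scalar case as a consistency check. So the proposal is correct and matches the paper's approach, with the normal-form bookkeeping deferred to the same reference the paper relies on.
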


\medskip

To write down the pseudospectral approximation to \eqref{eq: dde system}, let for $j = 0, \ldots, n$
\begin{align*}
y_j(t) \in \mathbb{R}^d
\end{align*}
and denote the components of this vector as
\begin{align*}
y_j(t)(k), \qquad k = 1, \ldots d. 
\end{align*}
We define the interpolation operators $P: \mathbb{R}^{nd} \to X, \ P_0: \mathbb{R}^d \times \mathbb{R}^{nd} \to X$ componentwise as
\begin{align*}
\left( Py \right)_k(\theta) &:= \sum_{j = 1}^n \ell_j(\theta) y_j(k) \\
\left(P_0 (y_0, y) \right)_k &:= \ell_0(\theta) y_0(k) + \left( Py \right)_k(\theta)
\end{align*}
{\color{black}where $\ell_j, \ j = 0, 1, \ldots, n$ are defined by \eqref{eq: Lagrange polynomials}.}
We approximate 
\begin{align*}
x_k(t+ \theta) \sim \sum_{j = 0}^n \ell_j(\theta) y_j(t)(k), \qquad k = 1, \ldots d
\end{align*}
and by collocation on the meshpoints $\theta_1, \ldots, \theta_n$ we obtain
\begin{align} \label{eq: collocation systems}
y_i'(t)(k) = \sum_{j = 1}^n D_{ij} y_j(t)(k) - y_0(t)(k) \left[ D \textbf{1}\right]_i, \qquad i = 1, \ldots, n 
\end{align}
with $D$ as in \eqref{eq: differentiation matrix}. 
To approximate the rule for extension, we supplement \eqref{eq: collocation systems} with
\begin{align} \label{eq: extension systems}
y_0'(t) = L(\alpha) P_0(y_0, y) + g(P_0(y_0, y), \alpha). 
\end{align}

Suppressing the index $i$ in the notation we write \eqref{eq: collocation systems} as  
\begin{align} \label{eq: abbreviation}
y'(t)(k) = D y(t)(k)- y_0(t)(k) D \textbf{1} , \qquad k = 1, \ldots d
\end{align}
and next, by suppressing $k$, abbreviate to 
\begin{align*}
y' = Dy - y_0 D \textbf{1}
\end{align*}
where this expression is to be understood $d$-componentwise as in \eqref{eq: abbreviation}. With this notation, the pseudospectral approximation to \eqref{eq: dde system} becomes
\begin{equation} \label{eq: ps approx system}
\begin{aligned}
y_0'(t) &= L(\alpha) P_0(y_0, y) + g(P_0(y_0, y), \alpha), \\
y'(t) &= Dy(t) - y_0(t) D \textbf{1} .
\end{aligned}
\end{equation}

The linearisation of \eqref{eq: ps approx system} around $x = 0$ has a solution of the form $\varepsilon_\lambda (\zeta_0, \zeta)$ if and only if
\begin{subequations}
\begin{align}
\lambda \zeta_0 & = L(\alpha) \ell_0 \zeta_0 + L(\alpha) P \zeta \label{eq: ce ps system 1} \\
\lambda \zeta & = D \zeta- \zeta_0 D \textbf{1}   \label{eq: ce ps system 2}
\end{align}
\end{subequations}
with $\zeta_j \in \mathbb{C}^d$ for $j = 0, \ldots, n$. 
The $d$-componentwise nature of \eqref{eq: ce ps system 2} allows us to write
\begin{align*}
\zeta_j(k) = \zeta_0(k) \left[(D-\lambda I)^{-1} D \textbf{1}\right]_j, \qquad j = 0, \ldots, n,  \quad k = 1, \ldots, d,
\end{align*}
which we abbreviate in the compact notation
\begin{align} \label{eq: ce systems part ii}
\zeta = \zeta_0(D-\lambda I)^{-1} D \textbf{1}.
\end{align}
Substituting \eqref{eq: ce systems part ii} into \eqref{eq: ce ps system 1} gives that \eqref{eq: ce ps system 1}--\eqref{eq: ce ps system 2} has a nontrivial solution if and only if
\begin{align*}
\det \Delta_n(\lambda, \alpha) \neq 0
\end{align*}
with
\begin{align*}
\Delta_n(\lambda, \alpha) = \lambda I_d - L(\alpha) \left(\ell_0 I_d + P(D-\lambda I)^{-1} D \textbf{1} \right).
\end{align*}
If $\det \Delta_n(\lambda, \alpha) =0$, then \eqref{eq: ce ps system 1}--\eqref{eq: ce ps system 2} has a non-trivial solution of the form $(p_\ast, p_\ast (D-\lambda I)^{-1} D \textbf{1})$, where $p_\ast \neq 0$ satisfies $\Delta_n(\lambda, \alpha) p_\ast = 0$. 

Applying Theorem \ref{thm: hopf bif ode} to system \eqref{eq: ps approx system} we obtain: 

\begin{theorem}[Hopf bifurcation in pseudospectral ODE] \label{thm: hopf ps systems}

Consider the system \eqref{eq: dde system} and suppose that Hypothesis \ref{hyp: condities functies system} is satisfied. If there exist $\alpha_n \in \mathbb{R}$ and {\color{black}$\omega_n >0$} such that 
\begin{enumerate}
\item $i \omega_n$ is a simple root of $ \det \Delta_n(\lambda, \alpha_n) =0$;
\item the branch of roots of $\det \Delta_n(\lambda, \alpha) =0$ through $i\omega_n$ at $\alpha = \alpha_n$ intersects the imaginary axis transversally, i.e. the real part of the derivative of the roots along the branch is non-zero. If $p_\ast, q_\ast \in \mathbb{C}^n \backslash \{0 \}$ are vectors such that $\Delta_n(i \omega_n, \alpha_n) p_\ast = 0, \ \Delta_n(i \omega_n, \alpha_n)^T q_\ast = 0$ and $q_\ast \cdot D_1 \Delta_n(i \omega_n, \alpha_n) p_\ast= 1$, then this condition amounts to
\begin{align*}
\re \bigl(q_\ast  \cdot D_2 \Delta_n(i\omega_n,\alpha_n) p_\ast) \neq 0,
\end{align*}
\item {\color{black}$k i \omega_n$ is not a root of $\det  \Delta_n(\lambda, \alpha_n) = 0$ for $k = 0,2,3 \ldots$}
\end{enumerate}
then a Hopf bifurcation occurs for $\alpha = \alpha_n$. 

Moreover, $\alpha^\ast$ as in Theorem \ref{thm: hopf bif ode} has the expansion $\alpha^\ast(\epsilon) =\alpha_n + a_{2n} \epsilon^2 + o(\epsilon^2)$, with $a_{2n}$ given by
\begin{align*}
a_{2n} = \frac{\re c_n}{\re \bigl( q_\ast \cdot D_2 \Delta_n(i\omega_n,\alpha_n) p_\ast \bigr) }
\end{align*}
with
\begin{equation} \label{eq: c ps systems}
\begin{aligned}
c_n = 
&  \frac{1}{2} q_\ast \cdot D_1^3 g(0, \alpha_n) \bigl(P_0 p, P_0 p, P_0\overline{p}\bigr)  \\ & \qquad + q_\ast \cdot D_1^2 g(0, \alpha_n)\Bigl( \Delta_n(0,\alpha_n)^{-1} D_1^2 g(0, \alpha_n) \bigl(P_0 p, P_0\overline{p}\bigr) P_0 \begin{pmatrix}
1 \\ \textbf{1}
\end{pmatrix}, P_0 p \Bigr) \\
&\qquad +  \frac{1}{2} q_\ast \cdot D_1^2 g(0, \alpha_n)  \bigl(
\Delta_n(2 i\omega_n,\alpha_n)^{-1} D_1^2 g(0, \alpha_n)\bigl(P_0 p, P_0 p \bigr) P_0 \begin{pmatrix}
1 \\ (D-2 i\omega_n I)^{-1} D \mathbf{1}
\end{pmatrix}, P_0\overline{p}\Bigr).
\end{aligned}
\end{equation}

with $p = (p_\ast, p_\ast(D-i \omega_n)^{-1} D \textbf{1})$. 

\end{theorem}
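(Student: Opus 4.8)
The plan is to obtain the theorem by applying the ODE Hopf bifurcation theorem, Theorem~\ref{thm: hopf bif ode}, to the finite-dimensional system \eqref{eq: ps approx system}, following verbatim the route that, in the scalar case, led from Theorem~\ref{thm: hopf bif ode} to Theorem~\ref{thm: hopf ps}. System \eqref{eq: ps approx system} is of the form \eqref{eq: ode hopf} on $\mathbb{R}^d\times\mathbb{R}^{nd}$, with linear part the block operator $A_n(\alpha) = \begin{pmatrix} L(\alpha)\ell_0 I_d & L(\alpha)P \\ -D\mathbf 1 & D\end{pmatrix}$ and nonlinearity $(y_0,y)\mapsto \bigl(g(P_0(y_0,y),\alpha),0\bigr)$, whose range lies in the first block $\mathbb{R}^d$; smoothness of $g$ and of $\alpha\mapsto L(\alpha)$ guarantees that Hypothesis~\ref{hyp: condities functies ode} holds once Hypothesis~\ref{hyp: condities functies system} does. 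So the task reduces to translating the three numbered hypotheses and the formulas for $a_{20}$ and $c$ of Theorem~\ref{thm: hopf bif ode} into the language of $\Delta_n$.

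First I would use the spectral correspondence established above: $\lambda$ is an eigenvalue of $A_n(\alpha)$ iff $\det\Delta_n(\lambda,\alpha)=0$, and for $\lambda = i\omega_n$ a right eigenvector is $p = \bigl(p_\ast,\, p_\ast(D-i\omega_n I)^{-1}D\mathbf 1\bigr)$, understood $d$-componentwise as in \eqref{eq: ce systems part ii}, where $\Delta_n(i\omega_n,\alpha_n)p_\ast = 0$. A Jordan-chain count through the block structure shows that $i\omega_n$ is a simple eigenvalue of $A_n(\alpha_n)$ iff it is a simple root of $\det\Delta_n(\cdot,\alpha_n)=0$, and condition~3 of the present theorem is literally condition~3 of Theorem~\ref{thm: hopf bif ode}. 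Repeating the block analogue of the resolvent computation \eqref{eq: resolvent ps}--\eqref{eq def q An} yields the adjoint eigenvector $(q_\ast,\tilde q)$, with $\tilde q_j = \sum_i\bigl[(i\omega_n I - D^T)^{-1}\bigr]_{ji}\bigl(L(\alpha_n)\ell_i\bigr)^T q_\ast$ and $q_\ast$ a null vector of $\Delta_n(i\omega_n,\alpha_n)^T$; one then verifies the identity $(q_\ast,\tilde q)\cdot(p_\ast,\tilde p) = q_\ast\cdot D_1\Delta_n(i\omega_n,\alpha_n)p_\ast$, so that the ODE normalisation $q\cdot p = 1$ is precisely the normalisation $q_\ast\cdot D_1\Delta_n(i\omega_n,\alpha_n)p_\ast = 1$ used in the statement. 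With $A_n'(\alpha)$ the block analogue of \eqref{eq def An dif} (nonzero only in the first block row), the relation $D_2\Delta_n(i\omega_n,\alpha) = -D_\alpha L(\alpha)\bigl(\ell_0 I_d + P(D-i\omega_n I)^{-1}D\mathbf 1\bigr)$ gives $A_n'(\alpha_n)p = \bigl(-D_2\Delta_n(i\omega_n,\alpha_n)p_\ast,\,0\bigr)$ and hence $q\cdot A_n'(\alpha_n)p = -q_\ast\cdot D_2\Delta_n(i\omega_n,\alpha_n)p_\ast$; thus the crossing condition \eqref{eq: crossing eigenvectors} becomes $\re\bigl(q_\ast\cdot D_2\Delta_n(i\omega_n,\alpha_n)p_\ast\bigr)\neq 0$, and the denominator $-\re(q\cdot A'(\alpha_0)p)$ of Theorem~\ref{thm: hopf bif ode} becomes exactly the denominator appearing in $a_{2n}$ (the two sign changes cancel).

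It then remains to evaluate the Lyapunov coefficient $c$ of Theorem~\ref{thm: hopf bif ode}. Since the nonlinearity lives in the first block and $P_0$ is linear, $D_1^j f(0,\alpha_n)(v^{(1)},\dots,v^{(j)}) = \bigl(D_1^j g(0,\alpha_n)(P_0 v^{(1)},\dots,P_0 v^{(j)}),\,0\bigr)$, and pairing with $q=(q_\ast,\tilde q)$ keeps only $q_\ast\cdot(\,\cdot\,)$; this accounts for the fact that the leading factor $D_1\Delta_n(i\omega_n,\alpha_n)^{-1}$ visible in the scalar formula \eqref{eq: c ps} no longer appears in \eqref{eq: c ps systems}, because for systems $q_\ast$ is normalised differently. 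Moreover the resolvents occurring in $c$ act on vectors of the form $(v,0)$, $v\in\mathbb{C}^d$, and the block version of \eqref{eq: resolvent ps} gives $(\lambda I - A_n(\alpha_n))^{-1}\begin{pmatrix}v\\0\end{pmatrix} = \bigl(\Delta_n(\lambda,\alpha_n)^{-1}v\bigr)\begin{pmatrix}1\\(D-\lambda I)^{-1}D\mathbf 1\end{pmatrix}$, so that $P_0$ of it equals $\bigl(\Delta_n(\lambda,\alpha_n)^{-1}v\bigr)\,P_0\begin{pmatrix}1\\(D-\lambda I)^{-1}D\mathbf 1\end{pmatrix}$, a constant $\mathbb{C}^d$-vector times a scalar function. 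Using this at $\lambda = 0$ (where $D^{-1}D\mathbf 1 = \mathbf 1$) for the $A_n(\alpha_n)^{-1}$-term and at $\lambda = 2i\omega_n$ for the $(2i\omega_n - A_n(\alpha_n))^{-1}$-term, and substituting $\phi = P_0 p$, the expression for $c$ collapses to exactly \eqref{eq: c ps systems}. Combining this with Theorem~\ref{thm: hopf bif ode} yields the Hopf bifurcation (in the sense made precise there) together with the expansion $\alpha^\ast(\epsilon) = \alpha_n + a_{2n}\epsilon^2 + o(\epsilon^2)$ and $a_{2n} = \re c_n/\re\bigl(q_\ast\cdot D_2\Delta_n(i\omega_n,\alpha_n)p_\ast\bigr)$.

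The main obstacle I anticipate is not conceptual but notational: carrying the $d$-componentwise conventions of \eqref{eq: abbreviation}--\eqref{eq: ce systems part ii} consistently through the adjoint-eigenvector computation and the multilinear terms of $c$ without sign, order-of-composition, or normalisation slips. The one place where a genuine (if routine) computation is needed is the verification that $(q_\ast,\tilde q)\cdot(p_\ast,\tilde p) = q_\ast\cdot D_1\Delta_n(i\omega_n,\alpha_n)p_\ast$ in the block setting, which plays the role of the explicit scalar identity \eqref{eq def q An}; everything else is a transcription of the argument preceding Theorem~\ref{thm: hopf ps}.
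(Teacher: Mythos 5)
Your proposal is correct and follows essentially the same route as the paper, which obtains this theorem simply by applying Theorem \ref{thm: hopf bif ode} to the block system \eqref{eq: ps approx system}, transcribing the scalar computations (eigenvector \eqref{eq def p An}, adjoint eigenvector and resolvent formula \eqref{eq: resolvent ps}--\eqref{eq def q An}, the identity $q\cdot A_n'(\alpha)p=-q_\ast\cdot D_2\Delta_n p_\ast$, and the evaluation of $c$ using $(\lambda I-A_n)^{-1}(v,0)$) into the $d$-componentwise setting. Your observation that the normalisation $q_\ast\cdot D_1\Delta_n(i\omega_n,\alpha_n)p_\ast=1$ replaces the scalar prefactor $D_1\Delta_n^{-1}$ is exactly the point where the system formula \eqref{eq: c ps systems} differs from \eqref{eq: c ps}, and your verification of $(q_\ast,\tilde q)\cdot(p_\ast,\tilde p)=q_\ast\cdot D_1\Delta_n p_\ast$ is the right (and correct) bookkeeping step.
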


Regarding the approximation of the Hopf bifurcation in the pseudospectral scheme, we have the following results (cf Proposition \ref{prop 1} and Proposition \ref{prop 2}):
\begin{prop}
Consider system \eqref{eq: dde system} and assume that the hypotheses of Theorem \ref{thm: hopf bifurcation dde systems} are satisfied. Then for $n \in \mathbb{N}$ large enough, there exist $\alpha_n, \omega_n$ such that $i \omega_n$ is a simple root of $\det \Delta_n(\lambda, \alpha_n) = 0$ and here exists a $C_1 > 0$ such that
\[\left| (\alpha_n, \omega_n) - (\alpha_0, \omega_0) \right| \leq  \frac{1}{\sqrt{n}} \left( \frac{C_1}{n}\right)^n \]
for all $n \in \mathbb{N}$ large enough. 

Assume moreover that for $n$ large enough, $ \det \Delta_n (k i \omega_n, \alpha_n) \neq 0$ for $k = 0, 2, 3 \ldots$. Then the hypotheses of Theorem \ref{thm: hopf ps systems} are satisfied and $\lim_{n \to \infty} a_{2n} = a_{20}$. Moreover, if the nonlinearity $g: X \times \mathbb{R} \to X$ is $C^4$, then there exists a $C_2 > 0$ such that
\begin{align*}
\left| a_{2n} -a_{20} \right| \leq \frac{1}{\sqrt{n}} \left( \frac{C_2}{n}\right)^n
\end{align*}
for all $n \in \mathbb{N}$ large enough. 
\end{prop}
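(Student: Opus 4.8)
The plan is to run the scalar arguments of Sections 4 and 5 componentwise; the two genuinely new points are the translation of the determinant transversality condition into the eigenvector pairing $q\cdot D_2\Delta_0(i\omega_0,\alpha_0)p$ via the rank-one adjugate at a simple root, and the convergence of the kernel vectors $p_{n\ast},q_{n\ast}$, which are constant ($=1$) in the scalar setting.

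First I would record the linear convergence estimates. The translation block \eqref{eq: collocation systems} is, for each component $k=1,\dots,d$, literally the scalar scheme \eqref{eq: collocation with D} with the same $D$ and $\mathbf 1$, so $\Delta_n(\lambda,\alpha)-\Delta_0(\lambda,\alpha)=-L(\alpha)\bigl[(\ell_0+P(D-\lambda I)^{-1}D\mathbf 1-\varepsilon_\lambda)I_d\bigr]$; since $L(\alpha)$ is bounded, Lemma \ref{lem: convergence collocation} gives $\norm{\Delta_n(\lambda,\alpha)-\Delta_0(\lambda,\alpha)}\le\frac{1}{\sqrt n}(C/n)^n$ uniformly on compact subsets of $\mathbb C\times\mathbb R$, with $D-\lambda I$ invertible for $n$ large. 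The analogue of Corollary \ref{cor: convergence derivatives ce} (via Lemma \ref{lemma: derivatives} in $\lambda$) together with the $C^k$-smoothness of $L$ gives the same estimate for $D_1\Delta_n-D_1\Delta_0$ and $D_2\Delta_n-D_2\Delta_0$. Since $\det\Delta_n$ and its first partials, as well as the entries of $\mathrm{adj}\,\Delta_n$, are polynomial expressions in the entries of $\Delta_n$ and their first derivatives, the analogues of Corollaries \ref{cor: ce} and \ref{cor: convergence derivatives ce} hold for $\det\Delta_n,\ D_1\det\Delta_n,\ D_2\det\Delta_n$ and $\mathrm{adj}\,\Delta_n$ with the same geometric rate.

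Next, detection of $(\alpha_n,\omega_n)$ and the transversality and simplicity conditions. Apply Lemma \ref{lem: discrete IFT} to $h_n(\omega,\alpha)=\bigl(\re\det\Delta_n(i\omega,\alpha),\,\im\det\Delta_n(i\omega,\alpha)\bigr)$, as in Proposition \ref{cor: approx bif pt} with $\Delta_0$ replaced by $\det\Delta_0$; the Cauchy--Riemann computation there shows $Dh_0(\omega_0,\alpha_0)$ is invertible iff $\re\bigl(D_1\det\Delta_0(i\omega_0,\alpha_0)^{-1}D_2\det\Delta_0(i\omega_0,\alpha_0)\bigr)\neq 0$. Since $i\omega_0$ is a simple root of $\det\Delta_0(\cdot,\alpha_0)$, the matrix $\Delta_0(i\omega_0,\alpha_0)$ has a one-dimensional kernel, so $\mathrm{adj}\,\Delta_0(i\omega_0,\alpha_0)$ has rank one and equals $\kappa\,p\,q^{T}$ with $\kappa\neq0$; Jacobi's formula $D_j\det\Delta_0=\mathrm{tr}\bigl(\mathrm{adj}\,\Delta_0\cdot D_j\Delta_0\bigr)$ then gives $D_j\det\Delta_0(i\omega_0,\alpha_0)=\kappa\,\bigl(q\cdot D_j\Delta_0(i\omega_0,\alpha_0)p\bigr)$ for $j=1,2$, so with the normalisation $q\cdot D_1\Delta_0(i\omega_0,\alpha_0)p=1$ one gets $D_1\det\Delta_0(i\omega_0,\alpha_0)^{-1}D_2\det\Delta_0(i\omega_0,\alpha_0)=q\cdot D_2\Delta_0(i\omega_0,\alpha_0)p$. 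Hence hypothesis (2) of Theorem \ref{thm: hopf bifurcation dde systems} makes $Dh_0(\omega_0,\alpha_0)$ invertible, Lemma \ref{lem: discrete IFT} yields $(\alpha_n,\omega_n)\to(\alpha_0,\omega_0)$ with $\norm{(\alpha_n,\omega_n)-(\alpha_0,\omega_0)}\le\frac{1}{\sqrt n}(C_1/n)^n$ and $Dh_n(\omega_n,\alpha_n)$ invertible; by the same Cauchy--Riemann argument this means $D_1\det\Delta_n(i\omega_n,\alpha_n)\neq 0$ (so $i\omega_n$ is a simple root) and, via the same rank-one adjugate identity for $\Delta_n$, $\re\bigl(q_{n\ast}\cdot D_2\Delta_n(i\omega_n,\alpha_n)p_{n\ast}\bigr)\neq 0$. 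With the added non-resonance hypothesis $\det\Delta_n(ki\omega_n,\alpha_n)\neq 0$, all hypotheses of Theorem \ref{thm: hopf ps systems} hold for $n$ large.

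Finally, convergence of the direction coefficient, repeating the proof of Lemma \ref{lem: convergence direction coefficient} term by term with one extra step. For $n$ large the kernel of $\Delta_n(i\omega_n,\alpha_n)$ is one-dimensional, and its direction converges to that of $\Delta_0(i\omega_0,\alpha_0)$ at the geometric rate because the normalised adjugate $\mathrm{adj}\,\Delta_n(i\omega_n,\alpha_n)/D_1\det\Delta_n(i\omega_n,\alpha_n)$ converges to $pq^{T}$ (numerator and denominator converge geometrically, the denominator bounded away from zero); hence one may choose kernel vectors $p_{n\ast}\to p$ and left-kernel vectors $q_{n\ast}\to q$, normalised by $q_{n\ast}\cdot D_1\Delta_n(i\omega_n,\alpha_n)p_{n\ast}=1$, at the same rate. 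Combining this with Lemma \ref{lem: convergence collocation} exactly as in \eqref{eq: schatting 3} gives $\norm{\varepsilon_{i\omega_0}p-P_0 p}\le\frac{1}{\sqrt n}(C/n)^n$ for $p=(p_{n\ast},p_{n\ast}(D-i\omega_n I)^{-1}D\mathbf 1)$, and similarly $\norm{\varepsilon_0-P_0(1,\mathbf 1)}$ and $\norm{\varepsilon_{2i\omega_0}-P_0(1,(D-2i\omega_n I)^{-1}D\mathbf 1)}$ tend to zero at the same rate. The factors $\Delta_n(0,\alpha_n)^{-1}$ and $\Delta_n(2i\omega_n,\alpha_n)^{-1}$ converge to $\Delta_0(0,\alpha_0)^{-1}$ and $\Delta_0(2i\omega_0,\alpha_0)^{-1}$ since the corresponding determinants stay away from zero (non-resonance for $k=0,2$) and the matrices converge; the multilinearity of $D_1^2 g,\,D_1^3 g$ together with the continuity --- and, if $g$ is $C^4$, local Lipschitz continuity --- of $\alpha\mapsto D_1^j g(0,\alpha)$ disposes of the remaining differences as in \eqref{eq: schatting 5}--\eqref{eq: schatting 7}. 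Assembling the three terms of \eqref{eq: c ps systems} and applying the quotient estimate \eqref{eq: breuk}, with denominators $\re\bigl(q_{n\ast}\cdot D_2\Delta_n(i\omega_n,\alpha_n)p_{n\ast}\bigr)$ bounded away from zero, yields $c_n\to c_0$ and hence $a_{2n}\to a_{20}$, with the bound $\frac{1}{\sqrt n}(C_2/n)^n$ in the $C^4$ case. The main obstacle is this quantitative eigenvector convergence and the bookkeeping of the rank-one adjugate identity; everything else is a componentwise rerun of the scalar proofs.
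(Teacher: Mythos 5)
Your proposal is correct and follows essentially the route the paper intends: the paper states this proposition without a written proof, referring back to the scalar results (Proposition \ref{cor: approx bif pt}, Lemma \ref{lem: convergence direction coefficient}, Propositions \ref{prop 1}--\ref{prop 2}), and your argument is precisely the componentwise rerun of those proofs. The two ingredients you add explicitly --- the rank-one adjugate/Jacobi identity converting between $\re\bigl(D_1\det\Delta^{-1}D_2\det\Delta\bigr)\neq 0$ and $\re\bigl(q\cdot D_2\Delta\, p\bigr)\neq 0$ at a simple root, and the geometric-rate convergence of the normalised kernel vectors $p_{n\ast},q_{n\ast}$ via $\mathrm{adj}\,\Delta_n/D_1\det\Delta_n \to pq^T$ --- are exactly the details the paper leaves implicit, and they are handled correctly.
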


\begin{prop}
Consider system \eqref{eq: dde system} and suppose that {\color{black}there exist a $N_0 \in \mathbb{N}$ such that for $n \in \mathbb{N}, \ n \geq N_0$}, the hypotheses of Theorem \ref{thm: hopf ps systems} are satisfied with $\lim_{n \to \infty} \alpha_n = \alpha_0, \ \lim_{n \to \infty} \omega_n = \omega_0 \neq 0$ and $\lim_{n \to \infty} a_{2n} = a_{20}'$.
Moreover, suppose that 
\begin{enumerate}
\item The sequence $\left(\det D_1 \Delta_n (i \omega_n, \alpha_n)\right)_{n \in \mathbb{N}}$ is uniformly bounded away from zero;
\item If we denote by $p_{\ast}, q_{\ast}$ the vector such that $\Delta_n(i \omega_n, \alpha_n) p_{\ast} = 0, \ \Delta_n(i \omega_n, \alpha_n)^T q_{\ast} = 0$ and \newline $q_{\ast} \cdot D_1 \Delta_n(i \omega_n, \alpha_n) p_{\ast} = 1$, then the sequence $\left(\re \left(q_{\ast} \cdot D_2 \Delta_n (i \omega_n, \alpha_n) p_{\ast} \right) \right)_{n \in\mathbb{N}}$ is uniformly bounded away from zero;
\item For each $k = 0, 2, \ldots$, the sequences $\left(\det \Delta_n (k i \omega_n, \alpha_n) \right)_{n \in \mathbb{N}}$ are uniformly bounded away from zero. 
\end{enumerate}
Then the hypotheses of Theorem \ref{thm: hopf bifurcation dde systems} are satisfied and the direction coefficient is given by $a_{20}'$, i.e. $a_{20} = a_{20}'$. 
\end{prop}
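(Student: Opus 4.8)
The plan is to mirror the proof of Proposition \ref{prop 2}, replacing scalar quantities by their matrix analogues and keeping careful track of kernels and multiplicities, which is the only place where something genuinely new is needed. As a first step I would record the systems versions of Corollary \ref{cor: ce} and Corollary \ref{cor: convergence derivatives ce}: for every compact $U\subseteq\mathbb C\times\mathbb R$ there is a $C>0$ such that $\|\Delta_0(\lambda,\alpha)-\Delta_n(\lambda,\alpha)\|$, $\|D_1\Delta_0(\lambda,\alpha)-D_1\Delta_n(\lambda,\alpha)\|$ and $\|D_2\Delta_0(\lambda,\alpha)-D_2\Delta_n(\lambda,\alpha)\|$ are all bounded by $\tfrac1{\sqrt n}(C/n)^n$ on $U$ for $n$ large. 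Indeed $\Delta_0$ and $\Delta_n$ differ only through $L(\alpha)$, and $D_2\Delta_0$ and $D_2\Delta_n$ only through $D_\alpha L(\alpha)$, applied entrywise to $\varepsilon_\lambda I_d-(\ell_0 I_d+P(D-\lambda I)^{-1}D\mathbf{1})$, so these two bounds follow directly from Lemma \ref{lem: convergence collocation} together with local boundedness of $\alpha\mapsto L(\alpha)$ and of its derivative; the bound on $D_1\Delta_0-D_1\Delta_n$ then follows entrywise from Lemma \ref{lemma: derivatives}, each entry being analytic in $\lambda$.

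Next I would pass to the limit in the bifurcation data. Continuity of $\det$ and $(i\omega_n,\alpha_n)\to(i\omega_0,\alpha_0)$ give $\det\Delta_0(i\omega_0,\alpha_0)=\lim_n\det\Delta_n(i\omega_n,\alpha_n)=0$, so $i\omega_0$ is a root of $\det\Delta_0(\cdot,\alpha_0)=0$. Hypothesis (1), which for each $n\ge N_0$ encodes simplicity of the root $i\omega_n$ and is assumed uniformly bounded away from zero, passes to the limit and yields that $i\omega_0$ is a \emph{simple} root of $\det\Delta_0(\cdot,\alpha_0)=0$; in particular $\ker\Delta_0(i\omega_0,\alpha_0)$ is one-dimensional. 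Normalising $\|p_\ast^{(n)}\|=1$ for the kernel vectors $p_\ast^{(n)}$ of $\Delta_n(i\omega_n,\alpha_n)$, compactness of the unit sphere and $\Delta_n\to\Delta_0$ let me extract a convergent subsequence $p_\ast^{(n)}\to p$ with $\Delta_0(i\omega_0,\alpha_0)p=0$; simplicity of $i\omega_0$ then forces the correspondingly normalised adjoint kernel vectors $q_\ast^{(n)}$ to stay bounded, so that $q_\ast^{(n)}\to q$ with $\Delta_0(i\omega_0,\alpha_0)^Tq=0$ and, on taking the limit of $q_\ast^{(n)}\cdot D_1\Delta_n(i\omega_n,\alpha_n)p_\ast^{(n)}=1$, also $q\cdot D_1\Delta_0(i\omega_0,\alpha_0)p=1$; uniqueness of $p,q$ up to joint scaling (guaranteed by simplicity) makes the choice of subsequence immaterial.

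With $p_\ast^{(n)}\to p$ and $q_\ast^{(n)}\to q$ in hand, the remaining hypotheses of Theorem \ref{thm: hopf bifurcation dde systems} follow by taking limits: hypothesis (2) gives $\re\bigl(q\cdot D_2\Delta_0(i\omega_0,\alpha_0)p\bigr)=\lim_n\re\bigl(q_\ast^{(n)}\cdot D_2\Delta_n(i\omega_n,\alpha_n)p_\ast^{(n)}\bigr)\neq 0$, the transversality condition, and for each fixed $k\in\{0,2,3,\dots\}$ the convergence $ki\omega_n\to ki\omega_0$ together with hypothesis (3) gives $\det\Delta_0(ki\omega_0,\alpha_0)=\lim_n\det\Delta_n(ki\omega_n,\alpha_n)\neq 0$, the non-resonance condition. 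Hence Theorem \ref{thm: hopf bifurcation dde systems} applies and the DDE \eqref{eq: dde system} has a Hopf bifurcation at $\alpha_0$ with some direction coefficient $a_{20}$. It remains to identify $a_{20}$ with $a_{20}'$: arguing exactly as in Lemma \ref{lem: convergence direction coefficient} one shows $c_n\to c$ by comparing the three terms of \eqref{eq: c ps systems} with those of \eqref{eq: c dde system}, using Lemma \ref{lem: convergence collocation} to control $\|\varepsilon_{i\omega_0}-P_0 p\|$, the convergences $\alpha_n\to\alpha_0$, $p_\ast^{(n)}\to p$, $q_\ast^{(n)}\to q$, continuity of the derivatives of $g$, and the convergence of the resolvents $\Delta_n(ki\omega_n,\alpha_n)^{-1}\to\Delta_0(ki\omega_0,\alpha_0)^{-1}$ for $k=0,2$ (whose denominators are bounded away from zero by hypothesis (3)); then $a_{2n}=\re c_n/\re\bigl(q_\ast^{(n)}\cdot D_2\Delta_n(i\omega_n,\alpha_n)p_\ast^{(n)}\bigr)\to\re c/\re\bigl(q\cdot D_2\Delta_0(i\omega_0,\alpha_0)p\bigr)=a_{20}$ by the elementary quotient estimate \eqref{eq: breuk}, and since $a_{2n}\to a_{20}'$ by assumption we conclude $a_{20}=a_{20}'$.

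The part I expect to be most delicate is the passage from ``each $i\omega_n$ is a simple root'' to ``$i\omega_0$ is a simple root'' together with the attendant convergence of the normalised kernel and adjoint-kernel vectors: this is the one ingredient with no scalar counterpart, since in Proposition \ref{prop 2} the ``eigenvector'' is a scalar multiple of an exponential (trivially convergent) and simplicity is immediate from $D_1\Delta_0(i\omega_0,\alpha_0)\neq 0$. The subtlety is that one-dimensionality of $\ker\Delta_n(i\omega_n,\alpha_n)$ need not survive a limit in which two simple roots coalesce, so hypothesis (1) has to be used precisely to exclude this (for $d=1$ it reduces exactly to the non-degeneracy that gives simplicity); once simplicity of $i\omega_0$ is secured, the boundedness of $q_\ast^{(n)}$, the convergence of the eigenvectors, and everything downstream are routine adaptations of the scalar arguments.
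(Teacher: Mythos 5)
Your overall route is the right one and, for most steps, matches what the paper implicitly intends (it gives no separate proof for systems, referring back to Proposition \ref{prop 2}): the systems versions of Corollaries \ref{cor: ce} and \ref{cor: convergence derivatives ce} follow entrywise from Lemma \ref{lem: convergence collocation} and Lemma \ref{lemma: derivatives} as you say; the limits of conditions (2) and (3) give transversality and non-resonance once the eigenvectors converge; and the identification $a_{20}=a_{20}'$ is a routine adaptation of Lemma \ref{lem: convergence direction coefficient} using \eqref{eq: breuk}, \eqref{eq: c ps systems} and \eqref{eq: c dde system}. The genuine gap is exactly at the step you single out, and your proposed use of hypothesis (1) does not close it. First, (1) does not ``encode simplicity of the root $i\omega_n$'' at finite $n$ --- simplicity there is a separate assumption, namely hypothesis 1 of Theorem \ref{thm: hopf ps systems}. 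More seriously, $\det D_1\Delta_n(i\omega_n,\alpha_n)$ is the determinant of the derivative matrix, not the derivative of the scalar function $\lambda\mapsto\det\Delta_n(\lambda,\alpha_n)$; by Jacobi's formula the latter is $\mathrm{tr}\bigl(\mathrm{adj}(\Delta_n)\,D_1\Delta_n\bigr)$, so simplicity of a root is governed by the adjugate of $\Delta_n$ paired with $D_1\Delta_n$, not by invertibility of $D_1\Delta_n$. Consequently ``hypothesis (1) passes to the limit and yields that $i\omega_0$ is a simple root'' is a non sequitur for $d>1$: for two uncoupled copies of a scalar DDE whose characteristic function $\delta$ has a simple root at $i\omega_0$, one has $\det\Delta_0=\delta^2$ with a double root and a two-dimensional kernel at $i\omega_0$, while $\det D_1\Delta_0(i\omega_0,\alpha_0)=\bigl(\delta'(i\omega_0)\bigr)^2\neq 0$. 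So the limit of (1) excludes neither a kernel of dimension $\geq 2$ nor a vanishing pairing $q\cdot D_1\Delta_0(i\omega_0,\alpha_0)p$, and since everything downstream (boundedness and convergence of the normalised $q_\ast^{(n)}$, the transversality limit, the identification of $a_{20}$) rests on simplicity of the limiting root, the proof as written does not establish hypothesis 1 of Theorem \ref{thm: hopf bifurcation dde systems}.

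The faithful systems analogue of the scalar condition (1) of Proposition \ref{prop 2} is a uniform lower bound on the derivative of the characteristic \emph{function}, i.e.\ on $D_1\bigl(\det\Delta_n\bigr)(i\omega_n,\alpha_n)=\mathrm{tr}\bigl(\mathrm{adj}\,\Delta_n(i\omega_n,\alpha_n)\,D_1\Delta_n(i\omega_n,\alpha_n)\bigr)$, equivalently on the scalar $\gamma_n$ defined by $\mathrm{adj}\,\Delta_n(i\omega_n,\alpha_n)=\gamma_n\,p_\ast q_\ast^{T}$ under your normalisation $q_\ast\cdot D_1\Delta_n\,p_\ast=1$. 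With that reading, the uniform convergence of $\det\Delta_n(\cdot,\alpha_n)$ to $\det\Delta_0(\cdot,\alpha_0)$ together with Lemma \ref{lemma: derivatives} (applied to these scalar analytic functions) gives $D_1\bigl(\det\Delta_0\bigr)(i\omega_0,\alpha_0)\neq 0$, hence $i\omega_0$ is a simple root with one-dimensional kernel, and your extraction of convergent $p_\ast^{(n)}$, $q_\ast^{(n)}$, the limits of (2) and (3), and the convergence $c_n\to c$ then go through exactly as you sketch. So either supply an argument showing that the hypotheses as literally stated exclude the degenerate limit (a coalescence of roots or a blow-up of $q_\ast^{(n)}$), or make explicit that your proof uses this reinterpreted/strengthened form of hypothesis (1); as it stands, the key new ingredient of the systems case is asserted rather than proved.
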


\section{Outlook} \label{sec: outlook}

\begin{figure}[t]
\centering
\includegraphics[scale=0.95]{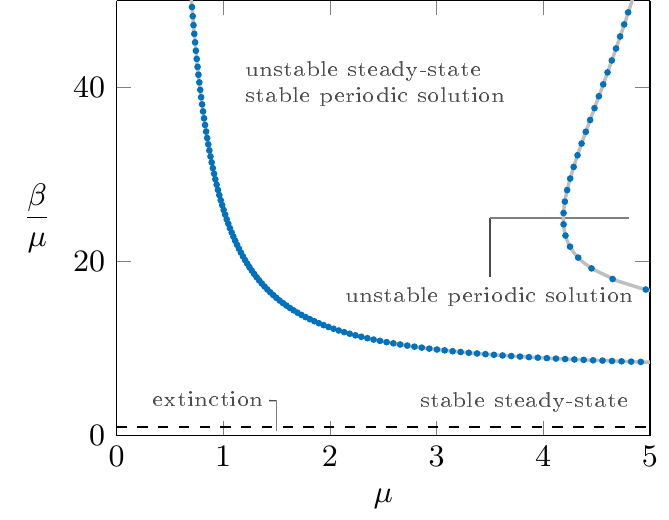}
\includegraphics[scale=0.95]{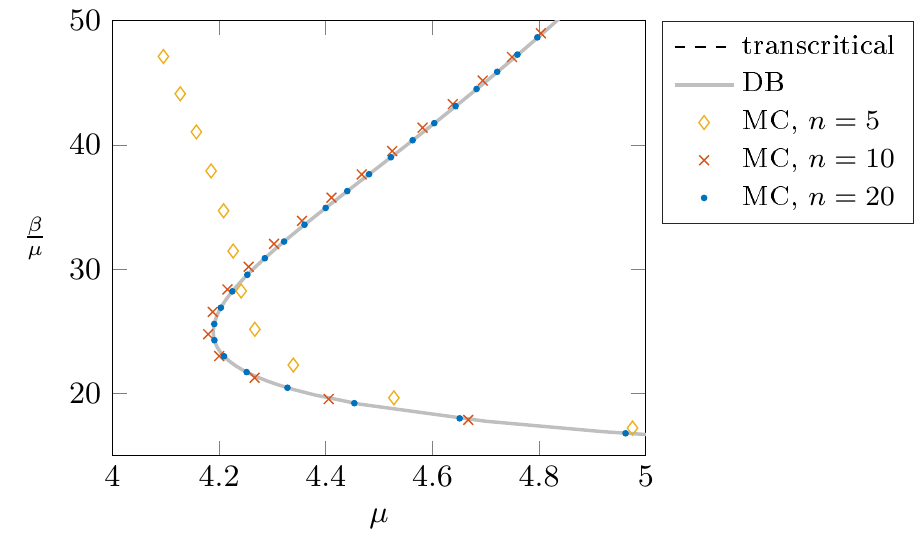}
\caption{
Stability diagram of \eqref{eq: blowfly unscaled} and its pseudospectral approximation for $\tau = 1$ and $h(x) = e^{-x}$. 
The Hopf and period doubling bifurcation curves are approximated numerically with DDE-BIFTOOL (gray solid, DB), and MatCont (colors, MC).
\textcolor{black}{
The right panel focusses on the approximation of the period doubling curve for different dimensions of the ODE system. We can observe the convergence of the approximated curve to that obtained with DDE-BIFTOOL when increasing the dimension $n$ (although larger dimension is required compared to the approximation of the Hopf bifurcation). 
}
}
\label{fig:pd-approx 2}
\label{fig:blowflies-regions}
\end{figure}

\begin{figure}
\centering
\includegraphics[scale=1]{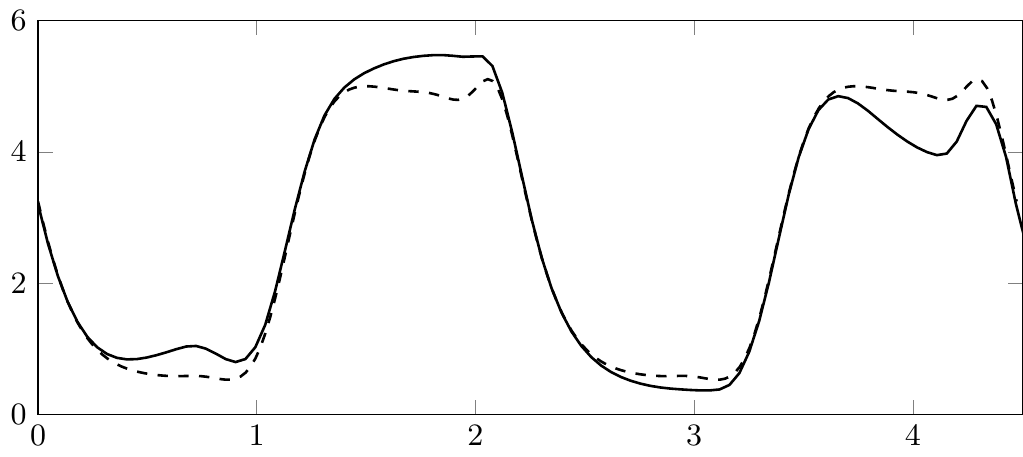}
\caption{Periodic solutions of \eqref{eq: blowfly unscaled}, approximated with MatCont and $n=20$, for $\mu=7$ and $\beta=105$ (after the period doubling bifurcation, which is detected at $\beta \approx 98.22$).
The dashed line shows the periodic solution on the unstable branch (period $T\approx 2.24$); the solid line shows the periodic solution on the stable branch emerging from the period doubling bifurcation (period $T\approx 4.47$).
}
\label{fig:blowflies-per-sol}
\end{figure}

In the Introduction and in Section \ref{sec: blowfly}  we claimed that the combination of pseudospectral discretisation and MatCont enables a reliable bifurcation analysis without requiring excessive computational efforts. Indeed, by using numerical bifurcation software one can push the analysis beyond the Hopf bifurcation and approximate the branch of periodic orbits emerging from Hopf, as well as its bifurcations.
The DDE \eqref{eq: blowfly unscaled}, which has only one discrete point delay, can be directly analysed also by existing and well-established numerical software for delay {\color{black}differential} equations, like DDE-BIFTOOL. 
We indeed use DDE-BIFTOOL as a benchmark for validating the output of the pseudospectral discretisation. 
In Figure \ref{fig:blowflies-regions} we show more detailed stability regions of equation \eqref{eq: blowfly unscaled} in the plane $(\mu,\frac{\beta}{\mu})$, including not only the Hopf bifurcation curve, but also the curve of period doubling bifurcations, approximated with DDE-BIFTOOL (version 3.1) and MatCont (version 7p1), running on Matlab 2019a.
At the period doubling bifurcation, the branch of periodic solutions originating from the Hopf point switches stability and becomes unstable, whereas a new stable branch of periodic solutions arises. The stability change is observed from the approximated multipliers at the periodic orbit, with one multiplier exiting the unit circle and crossing -1 as $\beta$ increases.
Two examples of coexisting periodic solutions are plotted in Figure \ref{fig:blowflies-per-sol}, taken from the unstable and stable branches.

In both the package DDE-BIFTOOL and MatCont, each periodic orbit is approximated via collocation of a boundary value problem in the period interval (see for example \cite{po, Alessia}). This requires the specification of a number of discretisation intervals and the degree of the collocation polynomial in each interval (we stress however that such mesh and polynomial degree are different from and independent of the mesh points and polynomial degree used to discretise the delay interval in the pseudospectral approach). In all the computations of this section we have taken a piecewise mesh of 40 intervals in the period interval, and polynomial approximations of degree 4 in each interval. These values guarantee sufficient accuracy in the approximation of the periodic orbits, so that the dominating errors in Figure \ref{fig:pd-approx 2} are those due to the chosen polynomial degree of the pseudospectral approximation.

As a further illustration we consider the system of equations
\begin{align}
w'(t) &= 1 - \frac{kw(t)w(t-1)}{2}q(t) \label{eq:system 1} \\
q'(t) &= w(t)-c, \label{eq:system 2}
\end{align}
for $k,c \in \mathbb{R}_+$. {\color{black}Equations \eqref{eq:system 1} -- \eqref{eq:system 2} correspond to a fluid flow of information between sender and receiver; $w$ refers to the average size of the sent information packages, $q$ to the average queue length and the total roundtrip time has been normalised to $1$ \cite{hollot, niculescu}. }

The stability regions in the plane $(k,c)$ are plotted in Figure \ref{fig:system-regions}: the lower curve represents the Hopf bifurcation, whereas the upper curve is a period doubling bifurcation.
Two periodic solutions are plotted in Figure \ref{fig:system-per-sol}.

\medskip

Numerical software like MatCont, among their output parameters, normally return also the value of the first Lyapunov coefficient at the Hopf bifurcation. 
We remark, however, that the output of MatCont applied to the pseudospectral approximation can not be directly taken as approximation of the direction coefficient of the DDE, since the scaling of the left and right eigenvectors traditionally used for ODE differs from the scaling used for DDE. For DDE, indeed, the eigenvectors are scaled by taking the first component equal to 1, whereas for ODE systems the eigenvector is normalised by requiring the 2-norm to be equal 1.

\medskip 

So far we did not manage to treat the non-resonance condition in a completely satisfactory manner, and we explicitly assumed condition \eqref{eq: non-resonance}. For retarded functional differential equations, there are no roots of the characteristic equation high up the imaginary axis. So checking the non-resonance condition is executable. One would expect that for the approximating pseudospectral ODE systems similar bounds can be found, but our initial (and somewhat half-hearted) attempt to derive them failed. When the dimension of the ODE system increases, so does the number of roots. Numerical observations (also in other contexts) suggest that these `additional' roots have real parts moving towards minus infinity. In particular, they do not even come close to the imaginary axis. For the `trivial’ DDE $y'(t) = 0$, where the `spurious’ eigenvalues are simply the eigenvalues of the matrix $D$, it is indeed proved that they go to minus infinity when the dimension increases \cite{eigenvaluesps, eigenvaluesps2}. For more general DDE, one could try to prove that the number of roots to the right of any vertical line in the complex plane is preserved if the dimension of the approximation is large enough (in the spirit of the preservation of the dimension of the unstable manifold treated for instance in \cite{rignum}). As far as we know, there are as yet no theoretical results for the pseudospectral approximation considered here. 

\medskip

The (numerical) bifurcation theory of delay equations is well developed, see for instance \cite{maikelsebastiaan} and the references given there. Our analysis of the Hopf bifurcation can be seen as a proof of principle that pseudospectral approximation yields a reliable bifurcation diagram, a reliable `picture'. But checking the details case by case for the entire catalogue of bifurcations would, we think, provide only negligible additional  insight. An attractive alternative might be to try to show, as a next step, that the centre manifold of a delay equation is (in a sense to be specified) approximated by the centre manifold of the pseudospectral ODE system. 

\medskip

The technical difficulties of state-dependent delay equations disappear in the pseudospectral approximation, for the very simple reason that polynomials are infinitely many times differentiable. So while here we focused on showing that known results for delay equations are well approximated by corresponding results for pseudospectral ODE, we might try to \emph{prove} results for state-dependent delay equations by showing that the limit of results for pseudospectral ODE systems exists and provides information about (behaviour of) solutions of the delay equation. A concrete challenge would be to provide a rigorous underpinning for the results derived in \cite{sieber}.

\begin{figure}
\centering
\includegraphics[scale=1]{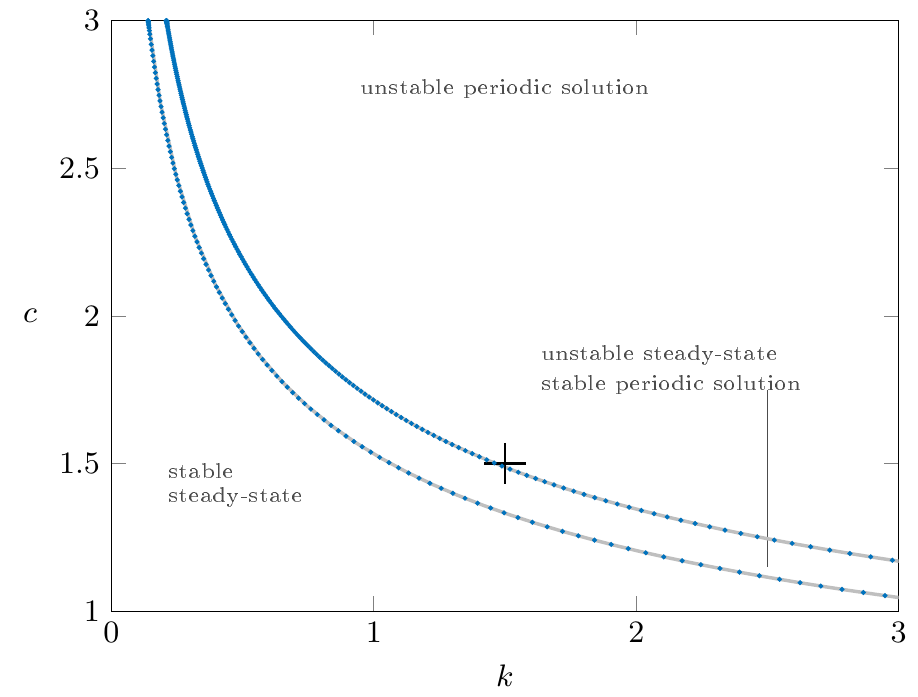}
\caption{Stability regions of system \eqref{eq:system 1}--\eqref{eq:system 2} {\color{black}and its pseudospectral approximation}, approximated with DDE-BIFTOOL (gray curve) and MatCont with $n=20$ (blue dots).
The lower curve corresponds to the Hopf bifurcation, the upper curve to the period doubling bifurcation.
}
\label{fig:system-regions}
\end{figure}

\begin{figure}
\centering
\includegraphics[scale=1]{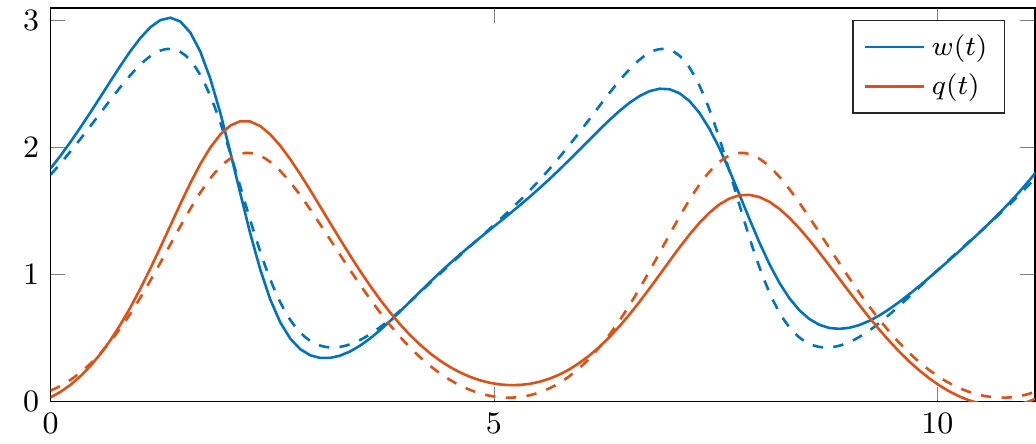}
\caption{Periodic solutions of system \eqref{eq:system 1}--\eqref{eq:system 2}, approximated with MatCont and $n=20$, for $c=k=1.5$ (beyond the period doubling bifurcation).
The dashed line shows the periodic solution on the unstable branch (period $T\approx 5.57$); the solid line shows the periodic solution on the stable branch emerging from the period doubling bifurcation (period $T\approx 11.15$).
}
\label{fig:system-per-sol}
\end{figure}

\appendix

\section{Stability charts for the `Nicholson's blowflies' equation} \label{appendix}

We collect some results concerning the DDE
\begin{equation} \label{eq: blowfly scaled}
N'(t) = - \mu N(t) + \beta N(t-1) h(N(t-1))
\end{equation}
with parameters $\beta, \mu \geq 0$. We pay special attention to the case
\begin{equation} \label{eq: h}
h(x) = e^{-x}. 
\end{equation}
Equation \eqref{eq: blowfly unscaled} can be brought in the form \eqref{eq: blowfly scaled} by scaling of time with a factor $\tau$. This entails the introduction of dimensionless parameters 
\begin{equation*}
\mu_{new} = \tau \mu_{old}, \qquad \beta_{new} = \tau \beta_{old},
\end{equation*}
where ``new'' refers to \eqref{eq: blowfly scaled} and ``old'' refers to \eqref{eq: blowfly unscaled}. Note, incidentally, that $\beta_{old}$ also incorporates the survival of the juvenile period and that one can make this explicit by putting
\begin{equation*}
\beta_{old} = \beta_0 e^{-\tau \mu_{old}};
\end{equation*}
but we will not elaborate on this further. Finally, note that the case $h(x) = e^{- \sigma x}$ can be reduced to \eqref{eq: h} by scaling of $N$ with a factor $\sigma$. 

In \cite{note} it is argued that using two parameters in Hopf bifurcation studies has great advantages. As \eqref{eq: blowfly scaled} naturally has two parameters, we are in the ideal situation. 

Nontrivial steady states $\overline{N}$ of \eqref{eq: blowfly scaled} are characterised by the equation 
\begin{equation} \label{eq: steady state}
h \left(\overline{N}\right) = \frac{\mu}{\beta}.
\end{equation}
Under the assumptions
\begin{itemize}
\item $h(0) = 1$;
\item $h$ is monotonically decreasing;
\item $\lim_{x \to \infty} h(x) = 0$
\end{itemize}
equation \eqref{eq: steady state} has a unique positive solution for $\beta > \mu$. In the parameter plane the line $\beta = \mu$ corresponds to a transcritical bifurcation. For $\beta< \mu$ the population goes extinct. For $\beta$ slightly larger than $\mu$, the nontrivial steady state is asymptotically stable. Our first aim is to investigate whether or not $\overline{N}$ can lose its stability by way of a Hopf bifurcation. See also \cite{blowflyhopf2} for an analysis of the occurrence of a Hopf bifurcation in system \eqref{eq: blowfly scaled} and \cite{blowflyhopf1} for an analysis of the direction of this bifurcation. 

As a first step we put 
\begin{equation*}
N(t) = \overline{N}+ x(t)
\end{equation*}
and rewrite \eqref{eq: blowfly scaled} as
\begin{equation*}
x'(t) = b_1 x(t) + b_2 x(t-1)  + \mathcal{G}(x(t-1), \mu, \beta)
\end{equation*}
where
\begin{equation} \label{eq: b}
b_1 = - \mu, \qquad b_2 = \beta (h(\overline{N}) + \overline{N} h'(\overline{N}))
\end{equation}
and
\begin{equation} \label{eq: g}
\mathcal{G}(x, \mu, \beta) = \beta \overline{N} \left(h(\overline{N}+x)- h( \overline{N}) - h'(\overline{N}) x\right) + \beta \left(h (\overline{N} + x) - h(\overline{N}) \right) x. 
\end{equation}
So the characteristic equation corresponding to the linearised equation reads
\begin{equation} \label{eq: ce blowfly}
\lambda - b_1 - b_2 e^{-\lambda} = 0. 
\end{equation}
This equation is analysed in great detail in \cite[Section XI.2]{sunstarbook}, to which we refer for justification of some statements below. 

Substituting $\lambda = i \omega$ into \eqref{eq: ce blowfly} and solving for $b_1$ and $b_2$ we obtain
\begin{equation} \label{eq: stability boundary}
b_1 = \frac{\omega \cos \omega}{\sin \omega}, \qquad b_2 = -\frac{\omega}{\sin \omega}. 
\end{equation}
The stability region in the $(b_1, b_2)$-plane is bounded by the line 
\begin{equation*}
b_1 + b_2 = 0, \qquad b_1 \leq 1
\end{equation*}
(corresponding to $\lambda = 0$ being a root of \eqref{eq: ce blowfly}) and the curve defined by \eqref{eq:  stability boundary} with
\begin{equation} \label{eq: omega}
0 \leq \omega < \pi. 
\end{equation}
Note that the curve and the line intersect at $(b_1 ,b_2) = (1, -1)$ corresponding to $\lambda = 0$ being a double root of \eqref{eq: ce blowfly}.  The root $\lambda = i \omega$ is simple for $\omega >0$. 

If one follows a one-parameter path in the $(b_1, b_2)$-plane that crosses the curve defined by \eqref{eq: stability boundary}, \eqref{eq: omega} transversally, the root of \eqref{eq: ce blowfly} crosses the imaginary axis transversally. 

There are no roots on the imaginary axis if $(b_1, b_2)$ is not of the form \eqref{eq: stability boundary}. By adjusting the domain of definition of $\omega$, one obtains via \eqref{eq: stability boundary} countably many curves in the $(b_1, b_2)$-plane such that \eqref{eq: ce blowfly} has a root on the imaginary axis. These curves do not intersect the curve corresponding to \eqref{eq: omega} nor each other. We conclude that the non-resonance condition is satisfied. We refer to \cite[Figure XI.1, page 306]{sunstarbook} for a graphical summary. 

The next step is to translate the results from the $(b_1, b_2)$-plane to the $(\mu, \beta)$-plane or, for that matter, the $(\mu, \beta/\mu)$-plane. Here it becomes useful to adopt \eqref{eq: h} since in that case \eqref{eq: b} amounts to
\begin{equation*}
b_1 = - \mu, \qquad b_2 = \mu \left(1- \ln \left(\frac{\beta}{\mu}\right) \right)
\end{equation*}
with inverse
\begin{equation} \label{eq: transformation}
\mu = - b_1, \qquad \beta = - b_1 e^{1 + \frac{b_2}{b_1}}. 
\end{equation}
By combining  \eqref{eq: stability boundary}, \eqref{eq: omega} and \eqref{eq: transformation} we obtain the curve depicted in Figure \ref{fig:hopf-analytic}, albeit in the $(\mu, \beta/\mu)$-plane. Note, however, that the interpretation requires $\mu \geq 0$ and that accordingly we should restrict to $\pi/2 \leq \omega \leq \pi$. 

The conclusion is that if we follow a one-parameter path in the $(\mu, \beta)$- or $(\mu,\beta/\mu)$-plane that crosses the stability boundary transversally, all assumptions of Theorem \ref{thm: hopf bifurcation dde} are satisfied. 

\medskip

We now compute the stability boundaries for the pseudospectral approximation to \eqref{eq: blowfly scaled}. The pseudospectral approximation to \eqref{eq: blowfly scaled} reads
\begin{equation} 
\label{eq: blowfly ps}
\begin{aligned}
y_0'(t) &= - \mu y_0(t) + \beta y_n(t)h(y_n(t)) \\
y'(t) &= D y(t) - D \textbf{1} y_0(t)
\end{aligned}
\end{equation}
where we have written $(y_0, \ldots, y_n) = (y_0, y) \in \mathbb{R}^{n+1}$. Equilibria of \eqref{eq: blowfly scaled} are in one-to-one correspondence with equilibria of \eqref{eq: blowfly ps}, so \eqref{eq: blowfly ps} has a non-trivial equilibrium $\overline{N} \textbf{1}$ with $h(\overline{N}) = \mu/\beta$ for $\beta > \mu$.  We shift the non-trivial equilibrium to zero via the coordinate transform $(y_0, y) = \overline{N} \textbf{1} + (x_0, x)$; then \eqref{eq: blowfly ps} becomes
\begin{equation} \label{eq: blowfly ps shift}
\begin{aligned}
x_0'(t) &=b_1 x_0(t) + b_2 x_n(t) + \mathcal{G}(x_n(t), \mu, \beta) \\
x'(t)& = D x(t) - D \textbf{1} x_0(t)
\end{aligned}
\end{equation}
with $b_1, b_2$ and $\mathcal{G}$ defined in \eqref{eq: b}--\eqref{eq: g}. The characteristic equation corresponding to the linearisation of \eqref{eq: blowfly ps shift} becomes (cf \eqref{eq: ce ode})
\begin{equation} \label{eq: ce blowfly ps}
\lambda - b_1 - b_2\left[(D-\lambda I)^{-1} D \textbf{1}\right]_n = 0. 
\end{equation}
We compute the stability boundary by setting $\lambda = i \omega$ and solving for $b_1, b_2$:
\begin{equation} \label{eq: stability boundary ps}
b_1 = - \frac{\omega \re \left[(D-i \omega I)^{-1} D \textbf{1}\right]_n}{\im \left[(D-i \omega I)^{-1} D \textbf{1}\right]_n}, \qquad b_2 = \frac{\omega}{\im \left[(D-i \omega I)^{-1} D \textbf{1}\right]_n}.
\end{equation}
Note that the expressions for $b_1, b_2$ have singularities but at different values than the expressions for $b_1, b_2$ in \eqref{eq: stability boundary}. By defining $h_0(x) = -\sin(x)$ and $h_n(x) = \im \left[(D-i x)^{-1} D \textbf{1}\right]_n$ {\color{black}and applying Lemma \ref{lem: discrete IFT}}, we see that the singularities of $b_1, b_2$ defined in \eqref{eq: stability boundary ps} approximate the singularities of $b_1, b_2$ defined in \eqref{eq: stability boundary}. Moreover, the expressions \eqref{eq: stability boundary ps} converge to the expressions \eqref{eq: stability boundary} for $n \to \infty$ and for $\omega$ in compact intervals; see Figure \ref{fig:blowfly_n=345}. 

We now want to determine whether the root $i \omega$ crosses the imaginary axis transversely if we cross the curves \eqref{eq: stability boundary ps} transversely. For ease of computation we restrict to varying $b_2$. If $i \omega$ is a simple root of \eqref{eq: ce blowfly ps}, then it lies on a branch of roots $\lambda(b_2)$ and the derivative along this branch is given by
\begin{equation} \label{eq: crossing ps}
{\color{black} \lambda'(b_2) =  \frac{\left[(D- i \omega I)^{-1} D \textbf{1}\right]_n}{1 - b_2\left[(D- i \omega I)^{-2} D \textbf{1}\right]_n}.}
\end{equation}
with $b_2$ defined in \eqref{eq: stability boundary ps}. So if the real part of the right hand side of \eqref{eq: crossing ps} is non-zero, the root on the imaginary axis crosses transversely if we vary $b_2$. 

Note that by Lemma \ref{lem: convergence collocation} and Corollary \ref{cor: convergence derivatives ce}, $i \omega$ is a simple zero of \eqref{eq: ce blowfly ps} for $n$ large enough; moreover, the expression in \eqref{eq: crossing ps} is non-zero for $n$ large enough. However, for fixed values of $n$ one has to check these conditions explicitly. We now do this for the case $n = 2$. 

\medskip For $n = 2$, the matrices $D$ and $A_2$ are given by
\begin{equation} \label{eq: explicit}
D = \begin{pmatrix}
0 & -1 \\
4 & - 3
\end{pmatrix}, \qquad A_2 = \begin{pmatrix}
b_1 & 0 & b_2 \\
1 & 0 & -1 \\
-1 & 4 & -3
\end{pmatrix}.
\end{equation}
We first compute the characteristic equation for the eigenvalues of $A_2$. With $D$ as in \eqref{eq: explicit}, \eqref{eq: ce blowfly ps} becomes
\begin{equation} \label{eq: roots blowfly 1}
 \lambda - b_1 - b_2 \frac{4-\lambda}{\lambda^2 + 3 \lambda +4} = 0.
\end{equation}
As a sanity check, we compute the eigenvalues of $A_2$ as roots of $\det (\lambda I - A_2) = 0$. We find that the eigenvalues are roots of the equation
\begin{equation} \label{eq: roots blowfly 2}
 (\lambda - b_1)\left(\lambda^2 + 3 \lambda + 4\right)-b_2(4-\lambda) = 0. 
\end{equation}
and indeed we see that the roots of \eqref{eq: roots blowfly 1} are exactly the roots of \eqref{eq: roots blowfly 2}. 

We now compute the stability boundary. Equation \eqref{eq: roots blowfly 2} has a root $\lambda = 0$ if
\begin{align} \label{eq: zero root}
b_1 = -b_2
\end{align}
(the fact that steady states of DDE and the approximating ODE are in one-to-one correspondence guarantees that steady state bifurcation conditions are too). 
Substituting $\lambda = i \omega$ in \eqref{eq: roots blowfly 1} and solving for $b_1, b_2$ (or, equivalently, computing \eqref{eq: stability boundary ps} for $D$ as in \eqref{eq: explicit}) gives
\begin{equation} \label{eq: stability boundary explicit}
b_1(\omega) = \frac{7 \omega^2-16}{\omega^2-16}, \qquad b_2(\omega) = \omega^2 - 4 + 3 \cdot \frac{7 \omega^2-16}{\omega^2-16}. 
\end{equation}
Note that the expressions for $b_1, b_2$ have singularities at $\omega = \pm 4$; the stability region in the $(b_1, b_2)$-plane is bounded by the line \eqref{eq: zero root} and the curve define by \eqref{eq: stability boundary explicit} with 
\begin{equation} \label{eq: omega n=2}
-4 \leq \omega \leq 4;
\end{equation}
see Figure \ref{fig:hopf_approx_n=2}.  

If we cross the curve \eqref{eq: stability boundary explicit}, \eqref{eq: omega n=2} by varying $b_2$, we find that the derivative of the eigenvalue along the branch is given by
{\color{black}
\begin{equation} \label{eq: lambda prime}
\begin{aligned}
\lambda'(\omega) &= \frac{i \omega - 4}{-3 \omega^2 + 6 i \omega + 4 - 2 i \omega b_1(\omega) - 3 b_1(\omega)+b_2(\omega)} \\
& = \frac{i \omega-4}{\omega\left(-2 \omega + 6i - 2b_1(\omega) i\right)} 
\end{aligned}
\end{equation}}
The real part of the denominator of \eqref{eq: lambda prime} is non-zero for $\omega \neq 0$; hence the denominator of  \eqref{eq: lambda prime} is non-zero for $\omega \neq 0$, which means that $\omega \neq 0$ is a simple zero of \eqref{eq: roots blowfly 2} for $b_1, b_2$ defined in \eqref{eq: stability boundary explicit}. The real part of \eqref{eq: lambda prime} becomes
\begin{equation*}
\re \lambda'(\omega) = \frac{14-2b_1(\omega)}{4 \omega^2 + (6-2b_1(\omega))^2}.
\end{equation*}
On the interval $(-4, 4)$ the expression for $b_1$ in \eqref{eq: stability boundary explicit} attains its maximum $b_1 = 1$ for $\omega = 0$. Therefore $\re \lambda'(\omega) \neq 0$ along the curve \eqref{eq: stability boundary explicit}--\eqref{eq: omega n=2}. Moreover, since $A_2$ has exactly three eigenvalues (counting multiplicity), the non-resonance condition is in this case easy to check. A resonance between eigenvalues $i \omega$ and $k i \omega$, $k> 0$, would require 4 eigenvalues and can therefore not happen. A resonance between $i \omega$, $\omega > 0$ and 0 can also not happen because the curve defined by \eqref{eq: stability boundary explicit} with $\omega \neq 0$ does not intersect the curve $b_1 = -b_2$. So the conclusion is that if we cross the stability boundary \eqref{eq: stability boundary explicit} transversally, a Hopf bifurcation of system \eqref{eq: blowfly ps} with $n = 2$ occurs. 

\medskip

For higher values of $n$, we can also explicitly compute the stability boundary $(b_1, b_2)$ as defined in \eqref{eq: stability boundary ps}. For $n = 3$, the characteristic equation becomes
\begin{equation*}
\lambda - b_1 -b_2 \frac{3 \lambda^2 - 32 \lambda+96}{3 \lambda^3 + 19 \lambda^2 + 64 \lambda + 96} = 0
\end{equation*}
and the stability boundary as defined in \eqref{eq: stability boundary ps} becomes
\begin{equation} \label{eq:n=3}
b_1(\omega) = 17 + \frac{2048 \bigl(7 \omega^2-72\bigr)}{9 \omega^4-1088 \omega^2+9216}, \qquad 
b_2(\omega) = -\frac{9 \omega^6-23 \omega^4+448 \omega^2+9216}{9 \omega^4-1088 \omega^2+9216}.
\end{equation}
For $n \ge 4$, the formula's can still be computed explicitly in terms of the mesh points $\theta_j$ but become rather long. Furthermore, for $n  \ge 4$, we need numerical approximations for $\theta_j$ to plot the parametric curves.

We have plotted the stability boundary \eqref{eq:n=3} together with \eqref{eq: zero root} in Figure  \ref{fig:hopf_approx_n=3}. Note that the curves defined by  \eqref{eq:n=3} and \eqref{eq: zero root} do not self intersect and do not intersect each other; so there is never a resonance between two roots on the imaginary axis. Moreover, we see that Figure \ref{fig:hopf_approx_n=3} has an extra curve compared to Figure \ref{fig:hopf_approx_n=2}. So it seems that the infinite number of curves defined by \eqref{eq: stability boundary} get approximated one by one as we increase the discretisation index $n$. 

In Figure \ref{fig:blowfly_n=345} we have plotted the graphs of the functions defined by \eqref{eq: stability boundary ps} for $n = 3,4,5$. We see that for $n = 3,4$, there are two curves within the depicted window. We see that as $n$ increases, the curves within the depicted window lie closer together. For $n =5$ a third curve appears in the window. 

\medskip

For the case where $h$ is given as in \eqref{eq: h}, we analyse the Lyapunov coefficient along the stability boundary for the DDE \eqref{eq: blowfly scaled}. For $\pi/2 < \omega < \pi$, define the functions
\begin{equation*}
B_{10}(\omega) = \frac{e^{-i \omega}}{1+ b_2(\omega)e^{-i \omega}}, \qquad B_{20}(\omega) = \frac{e^{-2 i \omega}}{2 i \omega - b_1(\omega) -b_2(\omega) e^{-2i \omega}} B_{10}(\omega)
\end{equation*}
with $b_1(\omega), b_2(\omega)$ as defined in \eqref{eq: stability boundary}. Then $c_0$ as defined in \eqref{eq: c dde} becomes
\begin{equation} \label{eq: c blowfly}
c_0 = \frac{1}{2} D_1^3 \mathcal{G}(0, \mu, \beta) B_{10}(\omega) - \frac{(D_1^2 \mathcal{G}(0, \mu, \beta))^2}{b_1 + b_2} B_{10}(\omega) + \frac{1}{2}(D_1^2 \mathcal{G}(0, \mu, \beta))^2 B_{20}(\omega)
\end{equation}
with 
\begin{equation} \label{eq: derivatives}
D_1^2 \mathcal{G}(0, \mu, \beta) = \mu \ln \left(\frac{\beta}{\mu}\right)-2 \mu, \qquad D_1^3 \mathcal{G}(0,\mu, \beta) = - \mu \ln \left(\frac{\beta}{\mu}\right)+3 \mu. 
\end{equation}
For $\pi/2 < \omega < \pi$, $\re c_0$ is plotted in Figure \ref{fig:coeff}. Note in particular that $\re c_0$ is always negative along the stability boundary \eqref{eq: stability boundary}--\eqref{eq: omega}. 

To compute the Lyapunov coefficient of the system \eqref{eq: blowfly ps shift} when $h$ is given by \eqref{eq: h}, define the functions
\begin{equation*}
B_{1n}(\omega) = \frac{\left( ((D-i \omega I)^{-1} D \textbf{1})_n \right)^2 \left((D+i \omega I)^{-1}D \textbf{1} \right)_n}{1-b_2(\omega) \left((D-i\omega I)^{-2} D \textbf{1}\right)_n}, \quad B_{2n}(\omega)   = \frac{\left((D-2 i \omega I)^{-1} D \textbf{1}\right)_n}{2 i \omega - b_1 - b_2 \left((D-i \omega I)^{-1} D \textbf{1}\right)_n} B_{1n}(\omega)
\end{equation*}
with $b_1(\omega), b_2(\omega)$ defined in \eqref{eq: stability boundary ps}. Then $c_n$ defined in \eqref{eq: c ps} becomes
\begin{equation} \label{eq: c blowfly ps}
c_n = \frac{1}{2} D_1^3 \mathcal{G}(0,\mu, \beta) B_{1n}(\omega) - \frac{(D_1^2 \mathcal{G}(0,\mu, \beta))^2}{b_1 + b_2} B_{1n}(\omega) + \frac{1}{2}(D_1^2 \mathcal{G}(0,\mu, \beta))^2 B_{2n}(\omega)
\end{equation}
with $D_1^2 \mathcal{G}(0,\mu, \beta), \ D_1^3 \mathcal{G}(0,\mu, \beta)$ defined in \eqref{eq: derivatives}. For $n = 1,2$, we have plotted $\re c_n$ in Figure \ref{fig:coeff}. We note that both for $n =2$ and $n = 3$ the Lyapunov coefficient is negative. This reinforces our earlier conclusions that already for low values of $n$, we find good qualitative agreement between the behaviour of the DDE and the pseudospectral ODE.

\begin{figure}[t]
\centering
\includegraphics[scale=0.5]{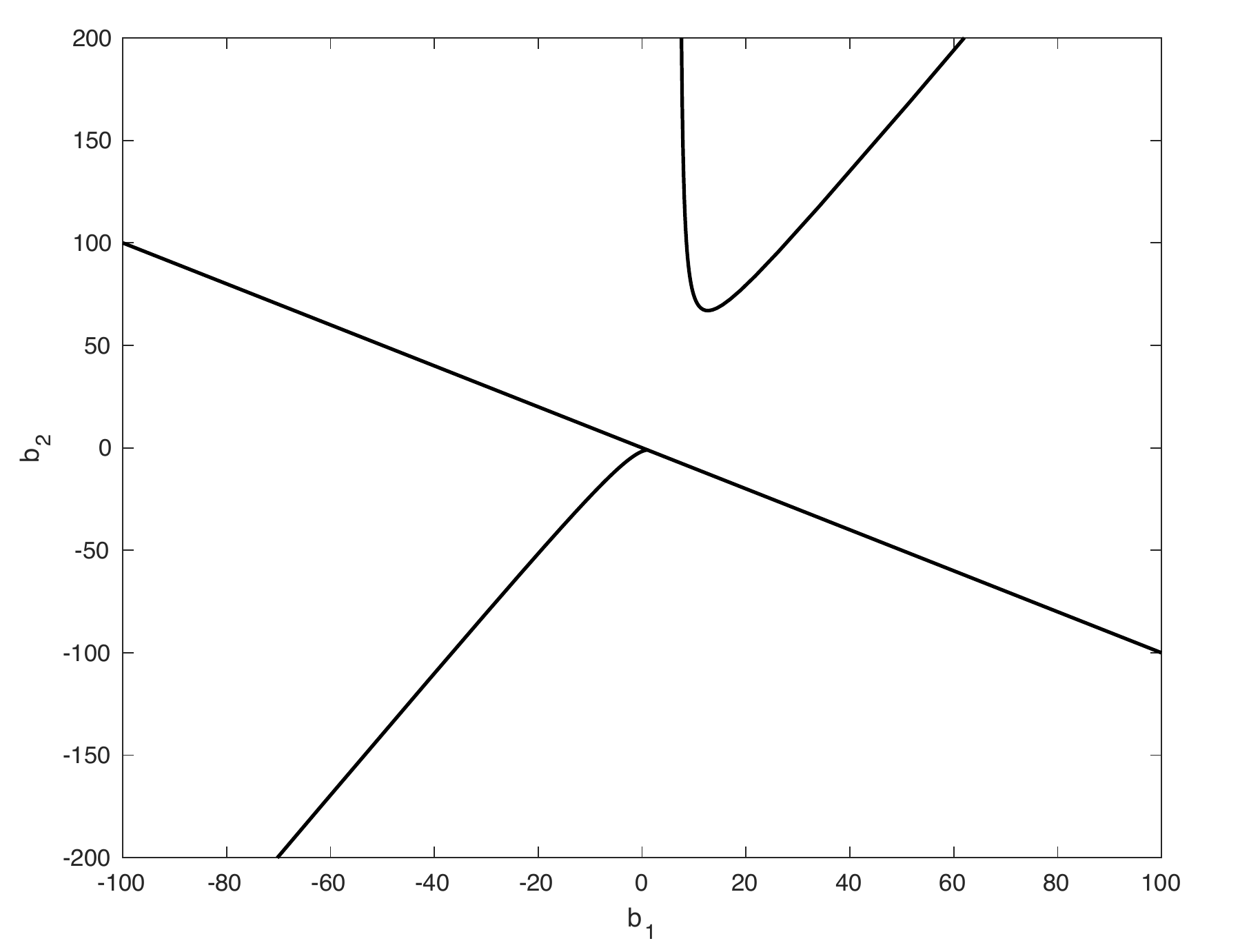}
\caption{The curves defined by \eqref{eq: zero root}, \eqref{eq: stability boundary explicit}.}
\label{fig:hopf_approx_n=2}
\end{figure}

\begin{figure}[t]
\centering
\includegraphics[scale=0.5]{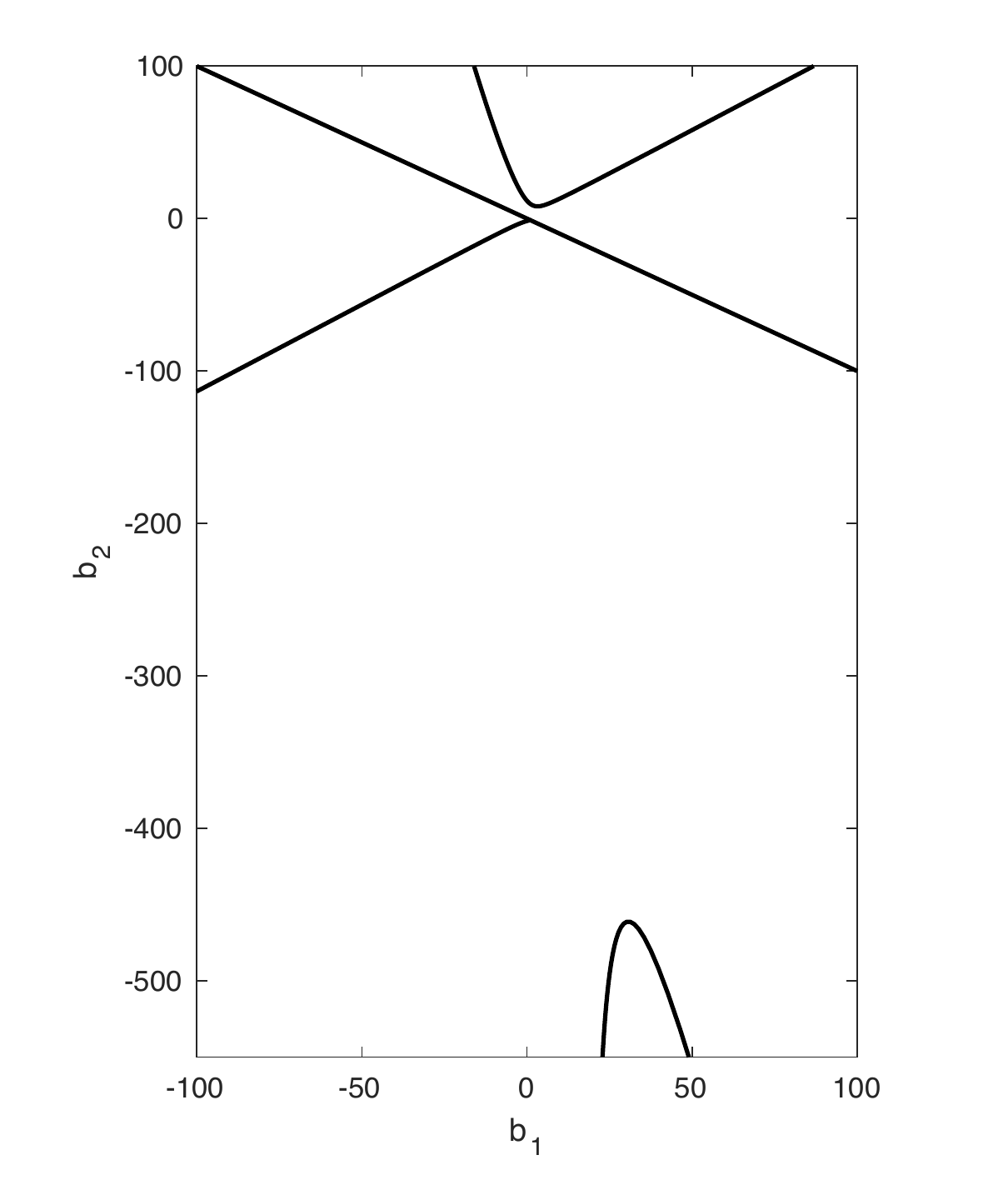}
\caption{The curves defined by \eqref{eq: zero root}, \eqref{eq:n=3}.}
\label{fig:hopf_approx_n=3}
\end{figure}

\begin{figure}[h] 
\begin{center}
\includegraphics[width=0.4\textwidth, angle=0]{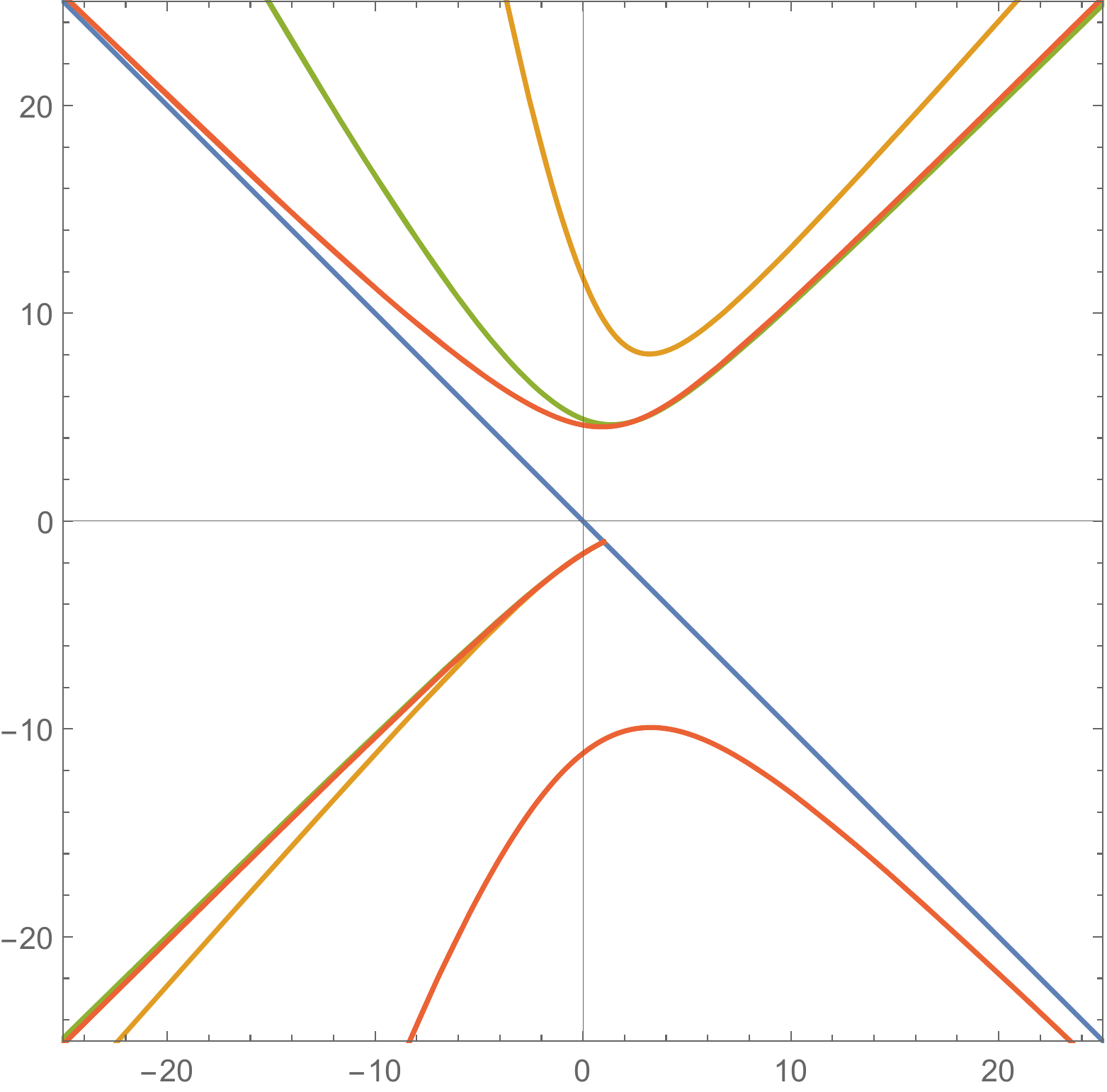}
\end{center}
\caption{Parametric plot of the graphs of the functions defined by \eqref{eq: stability boundary ps} for different values of $n$ 
in the $(b_1,b_2)$-plane: $n=3$ (brown, light, see expression \eqref{eq:n=3}), $n=4$ (green) and $n=5$ (brown, dark). The blue line corresponds to the line defined by \eqref{eq: zero root}.}
\label{fig:blowfly_n=345}
\end{figure}

\begin{figure}[t]
\centering
\includegraphics[scale=0.5]{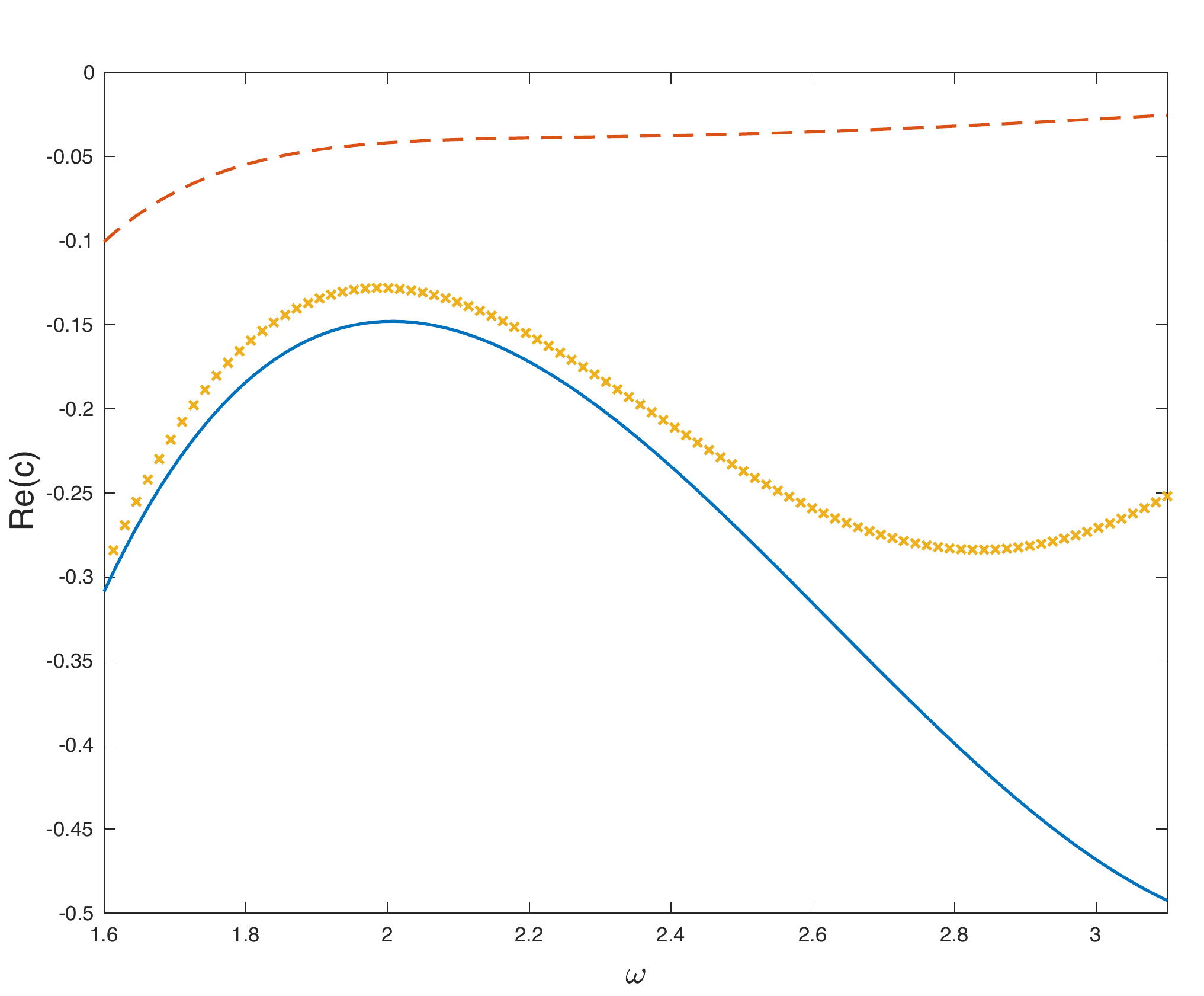}
\caption{The Lyapunov coefficient \eqref{eq: c blowfly} (blue line), and the Lyapunov coefficient \eqref{eq: c blowfly ps} for $n = 2$ (orange dashed line) and $n = 3$ (yellow crosses).}
\label{fig:coeff}
\end{figure}

\bibliographystyle{plain}
\bibliography{references}

\end{document}